\newcounter{lemmacounter}
\newtheorem{lemma}[lemmacounter]{Lemma}
\newtheorem{proposition}[lemmacounter]{Proposition}
\newtheorem{theorem}[lemmacounter]{Theorem}
\newcommand{\geodesic}[1]{{#1}^{\circ}}
\newcommand{\IG}{{\bf G}}
\newcommand{\IC}{{\bf C}}
\newcommand{\IR}{{\bf R}}
\newcommand{\IZ}{{\bf Z}}
\newcommand{\IN}{{\bf N}}
\newcommand{\IA}{{\bf A}}
\newcommand{\IF}{{\bf F}}
\newcommand{\IQ}{\mathbf{Q}}
\newcommand{\IQpbar}{\overline{\mathbf{Q}}_p}
\newcommand{\IQp}{\mathbf{Q}_p}
\newcommand{\mat}[2]{{\rm Mat}_{#1}({#2})}
\newcommand{\gl}[2]{{\rm GL}_{#1}({#2})}
\newcommand{\gal}[1]{{\rm Gal}({#1})}
\newcommand{\cg}[1]{{\rm Cl}_{#1}}
\newcommand{\decomp}[1]{{\mathcal D}({#1})}
\newcommand{\ord}{{\rm ord}}
\newcommand{\ssm}{\smallsetminus}
\newcommand{\hecke}[2]{T_{#2}({#1})}
\newcommand{\modfuncS}{\mathfrak{M}}
\newcommand{\modfunc}[2]{\modfuncS_{#2}({#1})}
\newcommand{\spec}[1]{{\rm Spec}({#1})}
\newcommand{\Tp}[1]{T_{p}({#1})}
\newcommand{\dual}[1]{{#1}^{\vee}}
\newcommand{\homS}{{\rm Hom}}
\renewcommand{\hom}[1]{\homS({#1})}
\newcommand{\ho}[1]{T({#1})}
\newcommand{\tr}[1]{{\rm Tr}({#1})}
\newcommand{\en}[1]{{\rm End}({#1})}
\newcommand{\Ok}[1]{\mathcal{O}_K}
\newcommand{\Height}[1]{H({#1})}
\newcommand{\generator}{\theta}
\newcommand{\atopx}[2]{\genfrac{}{}{0pt}{}{#1}{#2}}
\begin{document}
\title{The  Tate-Voloch Conjecture in a Power of a Modular Curve}
\author[Philipp Habegger]{P. Habegger}

\maketitle

\newcommand{\dist}[2]{{\rm dist}_{#2}({#1})}
\newcommand{\distp}[2]{{\rm dist}'_{#2}({#1})}
\newcommand{\prox}[2]{{\rm \lambda}_{#2}({#1})}

\begin{abstract}
 Let $p$ be a prime. 
Tate and Voloch proved that a point of finite order in the algebraic
torus cannot be $p$-adically too close to a fixed subvariety without
lying on it.
 The current work is motivated by the analogy between torsion
points on semi-abelian varieties and special or CM points on Shimura
varieties. We prove the analog of Tate and Voloch's result in a
power of the modular curve $Y(1)$ on replacing torsion points by
points corresponding to a product 
of elliptic curves with complex multiplication and ordinary reduction.
Moreover, we show that the assumption on ordinary reduction is
necessary.  
\end{abstract}

\section{Introduction}

Let $p$ be a fixed prime. 
 Tate and Voloch conjectured \cite{TateVoloch} that a torsion point in a
semi-abelian variety cannot be $p$-adically too close to a subvariety
without actually lying on it.  
They proved their conjecture
 when the semi-abelian variety is an algebraic
torus. Buium \cite{Buium} obtained related results in a more abstract
framework
and  Scanlon  later proved \cite{Scanlon:TV98,Scanlon:TV99}
the conjecture for  semi-abelian varieties. He used work of
Chatzidakis and Hrushovski on the model theory of difference fields
which also enabled Hrushovski's proof of the Manin-Mumford Conjecture. 

There is a
 well-established analogy between
torsion points on semi-abelian varieties and CM points on Shimura
varieties. It is reflected in the formal similarity between the
conjectures of Manin-Mumford and Andr\'e-Oort. 
The purpose of this paper is to begin investigating the
Tate-Voloch Conjecture from the modular point of view.
We will confine ourselves to a power of the modular curve $Y(1)$ which
is the coarse moduli space of elliptic curves. 
As a variety this is  the affine line. 

The $p$-adic absolute value $|\cdot|_p$ extends uniquely from the field
$\IQ_p$
 of $p$-adics numbers
to an algebraic closure $\IQpbar$ of $\IQ_p$ and then to
a completion $\IC_p$ of $\IQpbar$.
The ring of integers in $\IC_p$ will be denoted by $\mathcal{O}_p$.  
If not stated otherwise, we will consider $Y(1)^n$ as a scheme over the spectrum of  ${\IC_p}$. 

Let $Z\subset Y(1)^n$ be a Zariski closed subset 
with vanishing ideal $I \subset \IC_p[X_1,\ldots,X_n]$. 
We define the $p$-adic distance
of $x \in \mathcal{O}_p^n$ to $Z$ as 
\begin{equation}
\label{eq:distance}
  \dist{x,Z}{p} = \sup \{ |f(x)|_p;\,\, 
f\in I \cap \mathcal{O}_p[X_1,\ldots,X_n] \}. 
\end{equation}
Then $\dist{x,Z}{p}=0$ if and only if $x\in Z(\IC_p)$. 
We certainly have
 $|f(x)|_p\le 1$ and so $\dist{x,Z}{p}\le 1$.

The $j$-invariant of an elliptic curve defined over $\IC_p$ with complex
multiplication is an algebraic integer and therefore an element of
$\mathcal{O}_p$. 
We call a point of $Y(1)^n(\IC_p)$ a CM point if its
coordinates are $j$-invariants of elliptic curves with complex
multiplication. 
It is convenient to call CM points of $Y(1)$ singular moduli.
For example,  $0\in Y(1)(\IC_p)$ is a singular moduli
since it is the $j$-invariant of the elliptic curve with complex
multiplication by $\IZ[(\sqrt{-3}+1)/2]$. 

An elliptic curve over $\IC_p$ with complex multiplication has  good reduction at the maximal
ideal of $\mathcal{O}_p$.
A CM point  of $Y(1)^n$ is called ordinary if its coordinates 
 correspond
 to elliptic curves with ordinary reduction.

If not stated otherwise, a subvariety of $Y(1)^n$ is an irreducible closed
subvariety of $Y(1)^n$ defined over $\IC_p$.

\begin{theorem}
\label{thm:ordmodtv}
Let $X$  be a subvariety of $Y(1)^n$.  There
  exists $\epsilon > 0$ such that if
$x$ is an ordinary CM point of $Y(1)^n$
with $x\not\in X(\IC_p)$, then 
$\dist{x,X}{p}\ge\epsilon$. 
\end{theorem}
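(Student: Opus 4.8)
The plan is to reduce the theorem to a statement about heights, using the interplay between the $p$-adic distance and the arithmetic of singular moduli.

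The plan is to localise in the $p$-adic topology and, in each residue polydisc meeting the reduction of $X$, pass to Serre--Tate coordinates; this turns $Y(1)^n$ into a formal multiplicative group in which ordinary CM points become torsion points, so the statement becomes an instance of the Tate--Voloch theorem for tori (the case proved in \cite{TateVoloch}), at least disc by disc. The honest difficulty is to make the resulting bound uniform over the infinitely many residue discs.

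First I would carry out the elementary reductions. Since $X$ is cut out by finitely many polynomials and a singular modulus is an algebraic integer, one may assume $X$ is defined over a number field, spread it out over $\mathcal{O}_p$, and let $\overline{X}\subset Y(1)^n_{\overline{\IF}_p}$ be the reduction. If $x$ is an ordinary CM point whose reduction $\bar x$ is not on $\overline{X}$, then some element of $I\cap\mathcal{O}_p[X_1,\dots,X_n]$ is a unit at $\bar x$ and $\dist{x,X}{p}=1$; so it suffices to treat the points with $\bar x\in\overline{X}(\overline{\IF}_p)$, and these automatically lie in the ordinary locus $\overline{X}^{\,\mathrm{ord}}$ because each coordinate of $\bar x$ is an ordinary $j$-invariant.

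Fix such an $\bar x$ and work inside the residue polydisc $D_{\bar x}\subset Y(1)^n(\mathcal{O}_p)$ it determines. Serre--Tate theory attaches to $\bar x$ a canonical lift (whose coordinates are the singular moduli with maximal endomorphism ring reducing to the coordinates of $\bar x$) and, based there, Serre--Tate coordinates $q=(q_1,\dots,q_n)$ identifying $D_{\bar x}$ with the open unit polydisc in $\widehat{\IG}_m^{\,n}$; each $q_i-1$ differs from the naive local coordinate by a unit power series over $\mathcal{O}_p$, so the distance function is insensitive to the change of coordinates. The key input, via Deuring's description of how CM curves reduce together with the criterion for endomorphisms to lift, is that the ordinary CM points in $D_{\bar x}$ are \emph{precisely} the torsion points of $\widehat{\IG}_m^{\,n}$, i.e.\ the points with every $q_i\in\mu_{p^\infty}$. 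Writing the generators of $I$ in the Serre--Tate coordinates gives integral convergent power series vanishing on $X\cap D_{\bar x}$; bounding $\dist{x,X}{p}$ from below at a torsion point $x\notin X(\IC_p)$ thus reduces to showing that $v_p$ of such a power series, evaluated at a torsion point of $\widehat{\IG}_m^{\,n}$ off its zero set, is bounded above. For a single disc this is exactly the content of the torus case of Tate--Voloch (one must either check that \cite{TateVoloch} covers the analytic, not merely algebraic, zero loci that arise, or re-run their argument with the arithmetic of $p$-power roots of unity in the present formal setting, which is the one genuinely new analytic point here).

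The main obstacle is promoting the disc-by-disc bounds $\epsilon_{\bar x}$ to a single $\epsilon>0$, since $\overline{X}^{\,\mathrm{ord}}$ has infinitely many residue discs. I would argue by contradiction: if ordinary CM points $x_k\notin X(\IC_p)$ had $\dist{x_k,X}{p}\to0$, then eventually $\bar x_k\in\overline{X}^{\,\mathrm{ord}}$, and in the Serre--Tate coordinates of $\bar x_k$ the point $x_k$ is a torsion point of $\widehat{\IG}_m^{\,n}$ lying very close to, but not on, the transported $X$. One then has to show that the local analytic equations of $X$ along the points of $\overline{X}^{\,\mathrm{ord}}$ form a bounded family --- after normalising the Serre--Tate coordinates, $X$ should be cut out by power series whose coefficients have valuation bounded independently of $\bar x_k$ --- so that the constant in the Tate--Voloch estimate may be chosen uniformly. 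Here one must exploit that $X$ is a single variety of bounded complexity defined over $\overline{\IQ}$, that the integral model of $Y(1)^n$ has a rigid-analytically compact ordinary locus, and that CM points of bounded Serre--Tate level lie in number fields of bounded degree, reducing the uniformity to finitely many cases (if the levels stay bounded one gets only finitely many CM points and an immediate contradiction, while if they are unbounded the $p$-adic geometry of deep torsion points forces $x_k\in X$). Making this boundedness precise, rather than the essentially formal Serre--Tate and Tate--Voloch ingredients, is where the real work lies.
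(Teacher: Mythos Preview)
Your local picture is correct and is indeed part of the paper's toolkit: in the residue disc of an ordinary point, Serre--Tate coordinates turn ordinary CM lifts into $p$-power torsion of $\widehat{\IG}_m^{\,n}$, and this is exactly what the paper exploits in Section~\ref{sec:serretate} and in Lemma~\ref{lem:ordPhiN}. But the paper uses this only for \emph{special} subvarieties, where the defining equations are modular polynomials $\Phi_N(X_i,X_j)$ with integer coefficients; there the Serre--Tate argument becomes a concrete valuation estimate (Lemma~\ref{lem:ordPhiN}) with a constant depending only on $N$ and $p$, so no uniformity over residue discs is needed.

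The gap in your proposal is precisely the step you flag as ``where the real work lies'': promoting the disc-by-disc bounds to a global one. Two issues compound here. First, in each disc $X$ becomes the zero locus of a \emph{formal} (not algebraic) power series in the $q_i$, and the theorem of Tate--Voloch in \cite{TateVoloch} is stated for algebraic subvarieties of $\IG_m^n$; extending it to arbitrary integral power series is not automatic and is not in the cited literature. Second, and more seriously, your uniformity sketch does not work as written: bounded Serre--Tate level does \emph{not} give finitely many CM points, since the canonical lifts themselves range over infinitely many residue discs (there are infinitely many ordinary $j$-invariants over $\overline{\IF}_p$), and ``bounded family of power series'' has no evident meaning here because the coefficient fields of the local expansions vary with $\bar x$ over an infinite set with no compactness. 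The appeal to ``rigid-analytically compact ordinary locus'' is suggestive but does not by itself bound the Tate--Voloch constants.

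The paper sidesteps this entirely. Its main engine (Sections~\ref{sec:auto}--\ref{sec:nonspecial}) is a \emph{global} argument: class field theory produces, for any ordinary CM point $x$, an automorphism $\sigma\in\gal{\IQpbar/\IQ_p}$ and integers $k_i$ with $\sigma^D(x)\in T_{(p^{k_1},\ldots,p^{k_n})}(x)$ and $\max k_i$ large (Proposition~\ref{prop:goodgalois}). Combined with Pila's result that $X\not\subset T_N(X)$ for large $|N|$ when $X$ has no special factor (Proposition~\ref{prop:TNX}) and Koll\'ar's effective Nullstellensatz, this forces a point close to $X$ to be close to a proper Zariski-closed subset, and induction on dimension reduces to finitely many special subvarieties (Proposition~\ref{prop:ind3}). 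Only then is Serre--Tate invoked, and only for the explicit modular-polynomial equations defining special subvarieties. So the paper trades your hard uniformity problem for an essentially algebraic argument using Hecke correspondences and the Approximation Lemma; your route might be completable, but as it stands the globalisation step is a genuine hole, not a detail.
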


Pink and Roessler \cite{PR:2004} used Hrushovski's setup to prove the Manin-Mumford
Conjecture using only algebraic geometry.
In the same vein our argument avoids the model theory of difference
field employed by  Scanlon. 
But we still   rely on  a carefully chosen field automorphism coming from class
field theory that
carries the arithmetic information. 
Roughly speaking,  the
uniformity statements provided by model theory are replaced by an
effective  version of Hilbert's Nullstellensatz due to Koll\'ar
\cite{Kollar}.
As in  Scanlon's argument we reduce the proof of 
 Theorem \ref{thm:ordmodtv} to the case where $X$ is a special
 subvariety of $Y(1)^n$. In Section \ref{sec:hecke} we give a complete
 description 
 of all such special subvarieties.
To treat  special subvarieties
we will apply Serre-Tate theory for ordinary elliptic curves in
characteristic $p$. It enriches the formal
deformation space of an ordinary elliptic curve with the structure of a formal 
torus. 
De Jong and Noot's \cite{deJongNoot} characterization of 
ordinary CM points as points of finite order will also play an important
role.

Our approach retains a connection to model theory.
 Indeed, we need  recent results of Pila
 on the weakly special
subvarieties contained in $X$ \cite{Pila:AO} and on
the Zariski closure in
$Y(1)^n$ of a Hecke orbit \cite{Pila:ellipticsurfaces}. The latter extends to varieties over $\IC$ an earlier 
 theorem proved together with the author \cite{hp:beyondAO} if the
 Hecke orbit consists of algebraic elements. These results
 rely on a strategy initially proposed by Zannier to prove the
 Manin-Mumford Conjecture using a theorem of Pila and Wilkie on
 rational points of sets definable in an o-minimal structure.

The choice of distance function (\ref{eq:distance}) was in part for
convenience. Another natural choice would be 
\begin{equation*}
  \distp{x,Z}{p} = \inf \{|x-y|_p;\,\, y\in Z(\IC_p)\}
\end{equation*}
where  $|\cdot|_p$ denotes also the $p$-adic sup-norm on $\IC_p^n$.
Using the Taylor
expansion of an element  $f\in I\cap \mathcal{O}_p[X_1,\ldots,X_n]$
around  $y\in Z(\IC_p)$ 
together with the ultrametric triangle inequality
yields $|f(x)|_p \le |x-y|_p\max\{1,|x-y|_p\}^{\deg f-1}$
for $x\in \mathcal{O}_p^n$.
But $|f(x)|_p\le 1$ and therefore, 
$|f(x)|_p\le |x-y|_p$. 
Taking first the infimum over $y\in Z(\IC_p)$ and then the supremum
over the admissible $f$ yields
\begin{equation*}
  \dist{x,Z}{p}\le \distp{x,Z}{p}. 
\end{equation*}
Therefore, Theorem \ref{thm:ordmodtv} holds
for the alternative distance $\distp{x,Z}{p}$.

The connection of our result to the
 Tate-Voloch Conjecture in the semi-abelian case begs the question why
 we restrict ourselves to \emph{ordinary} CM points. 
The reason,  apparent by the proposition below, is that
 subvarieties can be approximated arbitrarily well $p$-adically by
general singular moduli.
More precisely, we show that already the zero-dimension
variety
$X = \{0\}$ is the  $p$-adic limit of a sequence
 of singular moduli $x$
corresponding to elliptic curves with supersingular reducation at a
place above $p$. 
We will bound the $p$-adic distance in terms of the discriminant $\Delta(x)<0$
of the endomorphism ring 
of an  elliptic curve attached to $x$.

\begin{proposition}
\label{prop:approximate}
There is a constant $c>0$
 with the following property. 
Let $p$ be an odd prime with $p\equiv 2 \mod 3$. 
  There exists a sequence $x_1,x_2,\ldots$ of non-zero singular
  moduli with $\Delta(x_n)$ a fundamental discriminant, 
$\lim_{n\rightarrow\infty}\Delta(x_n)=-\infty$,
and $|x_n|_p\le c |\Delta(x_n)|^{-1/2}$. 
\end{proposition}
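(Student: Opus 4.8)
The plan is to realise $0=j(E_0)$, where $E_0$ is the elliptic curve with complex multiplication by $\IZ[\zeta_3]=\mathcal{O}_{-3}$, as a $p$-adic limit of singular moduli of elliptic curves of \emph{fundamental} discriminant that reduce to $E_0$ modulo $p$ and can be forced $p$-adically arbitrarily close to it. The elementary input is that an odd prime $p\equiv 2\bmod 3$ satisfies $p\ge 5$ and is inert in $\IQ(\sqrt{-3})$, so $E_0$ has supersingular reduction $\widetilde{E}_0$ at the maximal ideal of $\mathcal{O}_p$, with $j(\widetilde{E}_0)=0$. Write $R_0=\en{\widetilde{E}_0}$, a maximal order in the quaternion algebra over $\IQ$ ramified exactly at $p$ and $\infty$; reduction of endomorphisms furnishes an optimal embedding $\IZ[\zeta_3]\hookrightarrow R_0$.

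Given $k\ge 1$, I would first pick a negative fundamental discriminant $\Delta$ with $\Delta\equiv -3\pmod{p^{k}}$ and $|\Delta|\le C p^{2k}$ for an absolute constant $C$ --- a routine sieve for squarefree integers in a residue class modulo $p^{k}$ (with the correct residue modulo $4$), the key point being that $p\nmid\Delta$. Then $\bigl(\tfrac{\Delta}{p}\bigr)=\bigl(\tfrac{-3}{p}\bigr)=-1$, so $p$ is inert in $\IQ(\sqrt\Delta)$, $\mathcal{O}_\Delta\otimes\IZ_p$ is the unramified quadratic extension of $\IZ_p$, and $\mathcal{O}_\Delta$ is congruent to $\IZ[\zeta_3]$ modulo $p^{k}$ (e.g.\ it has a generator of trace and norm $\equiv 1\pmod{p^{k}}$). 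For $|\Delta|$ large, equidistribution of the CM points of discriminant $\Delta$ in the supersingular locus, refined to keep track of $p^{k}$-adic level structure, then produces a CM elliptic curve $A$ over $\IC_p$ with $\en{A}=\mathcal{O}_\Delta$ whose reduction is $\widetilde{E}_0$ and whose reduced complex multiplication $\mathcal{O}_\Delta\hookrightarrow R_0$ agrees modulo $p^{k}R_0$ with the fixed $\IZ[\zeta_3]\hookrightarrow R_0$. Put $x=j(A)$: then $x\neq 0$ (since $\Delta\neq -3$), $\Delta(x)=\Delta$, and taking $k=k_n\to\infty$ gives the sequence $x_1,x_2,\dots$ with $\Delta(x_n)\to -\infty$.

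The quantitative bound is the crux. Viewing $E_0$ and $A$ as characteristic-zero lifts of $\widetilde{E}_0$, they are points of its one-parameter formal deformation space, which by Serre--Tate theory is that of the connected $p$-divisible group $\widetilde{E}_0[p^\infty]$; here $E_0$ sits at the origin, and (since $\widetilde{E}_0$ carries extra automorphisms) the $j$-invariant generates the same ideal there as the cube of a parameter. Gross's theory of canonical and quasi-canonical liftings, or a Gross--Keating-style length computation, expresses the $p$-adic distance between two CM lifts through the $p$-adic proximity of their complex multiplications inside $R_0$; applied to $A$ and $E_0$, whose multiplications agree modulo $p^{k}R_0$, it gives $\ord_p(x)\ge k$ (in fact $\ge 3k$, by the cube). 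With $|\Delta(x_n)|\le C p^{2k_n}$, i.e.\ $p^{k_n}\ge(|\Delta(x_n)|/C)^{1/2}$, this yields $|x_n|_p=p^{-\ord_p(x_n)}\le p^{-k_n}\le\sqrt{C}\,|\Delta(x_n)|^{-1/2}$, and $c=\sqrt{C}$ is absolute because the deformation-theoretic input concerns only the single group $\widetilde{E}_0[p^\infty]$ and is uniform in $p$. I expect this local estimate --- together with the level-refined equidistribution feeding it the correct CM curve --- to be the main obstacle. It cannot be replaced by the global Gross--Zagier formula for $H_{\Delta_n}(0)=\pm\prod_\sigma\sigma(x_n)$: that shows $p\mid H_{\Delta_n}(0)$ to a power growing with $k_n$, but splits this valuation among the $m(\mathcal{O}_{\Delta_n},R_0)$ conjugates of $x_n$ reducing to $\widetilde{E}_0$ --- a quantity tending to $\infty$ with $|\Delta_n|$ --- so averaging over conjugates gives only a bound tending to $0$; one genuinely needs to single out the conjugate whose complex multiplication matches that of $E_0$ to precision $p^{k_n}$.
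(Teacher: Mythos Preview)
Your overall architecture is right: realise $0=j(E_0)$ as a $p$-adic limit of singular moduli whose CM orders embed in $R_0=\en{\widetilde E_0}$ close (mod $p^k$) to the fixed $\IZ[\zeta_3]\hookrightarrow R_0$, and convert this closeness into $\ord_p j(A)\ge k$ via Gross's deformation picture. The paper uses exactly this mechanism, through Gross--Zagier's Propositions~2.3 and~2.7 (Deuring lifting), so your second half is on target. The gap is in the \emph{existence} step. You fix $\Delta$ first (squarefree with $\Delta\equiv -3\pmod{p^k}$ and $|\Delta|\le Cp^{2k}$) and then appeal to ``equidistribution of CM points \dots\ refined to $p^k$-adic level'' to produce a CM curve whose embedding $\mathcal O_\Delta\hookrightarrow R_0$ agrees with $\IZ[\zeta_3]$ modulo $p^kR_0$. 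That equidistribution, in any form I know, only kicks in once $|\Delta|$ is large relative to the number of level-$p^k$ targets, and you give no reason why the threshold is $O(p^{2k})$ rather than a higher power; without this your bound $|\Delta(x_n)|\le Cp^{2k}$ does not follow. Incidentally, the congruence $\Delta\equiv -3\pmod{p^k}$ is neither necessary nor sufficient for what you actually need, which is a condition on the \emph{element} $\varphi\in R_0$ (namely $\varphi\in\IZ[\zeta_3]+p^kR_0$), not on its discriminant alone.

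The paper avoids this by reversing your order of quantifiers: it writes down the quaternion element first and lets the discriminant fall out. Working in an explicit model of the maximal order $\mathcal O\cong R_0$ containing $\IZ[\zeta_3]$, it exhibits, for each $n\ge 0$ and each odd integer $x$, an element $\varphi\in\IZ[\zeta_3]+p^n\mathcal O$ with reduced trace $1$ and reduced norm $(1+d)/4$ where $d=3x^2+4p^{2n+1}$; this is a two-line computation, so existence is free. Deuring lifting then gives a CM curve $A$ with $\en A\supset\IZ[(1+\sqrt{-d})/2]$ and $A\equiv E_0\pmod{p^{n+1}}$, hence $|j(A)|_p\le p^{-(n+1)}$. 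The only remaining issue is to make $-d$ fundamental, i.e.\ $d=3x^2+4p^{2n+1}$ squarefree; an effective sieve (after Nagel/Friedlander--Iwaniec) shows this happens for some $x$ with $d\le c'p^{2n+1}$, which immediately yields $|x_n|_p\le c|\Delta(x_n)|^{-1/2}$. So the fix to your argument is not to sharpen the equidistribution but to drop it: construct the endomorphism explicitly in $\IZ[\zeta_3]+p^n R_0$, and apply your sieve to the resulting quadratic polynomial $3X^2+4p^{2n+1}$ rather than to a residue class of discriminants.
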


The proof of this proposition relies on explicit computations in the
endomorphism ring of the supersingular elliptic curve
$y^2=x^3+1$ in characteristic $p$
 combined with 
ideas
of Gross and Zagier \cite{GrossZagier}. We also make explicit 
an old result of
Nagel \cite{Nagel} on square-free values of quadratic polynomials.

Although a uniform lower bound as Theorem \ref{thm:ordmodtv} 
is impossible for unrestricted CM points, we ask if 
a weaker bound holds true.
Let $X$ be as in the theorem. Does there
exist positive constants $c$ and $\lambda$ such that
any CM point $x=(x_1,\ldots,x_n)$  with $x\not\in X(\IC_p)$ 
satisfies
$\dist{x,X}{p}\ge c
\max\{|\Delta(x_1)|,\ldots|\Delta(x_n)|\}^{-\lambda}$?
Certainly, $\lambda \ge 1/2$ if such an inequality were true.

It is natural to ask if Theorem \ref{thm:ordmodtv} can be extended,
for example, to the coarse moduli space of principally
polarized abelian varieties of  fixed dimension. Our approach would
require  a variant of Pila's
result mentioned above in this context. 
From this point of view, it 
 would also be interesting to have a proof of our result 
that circumvents Pila's Theorem and relies on the model theory of
difference fields.

The paper is organized a follows.  In 
Section \ref{sec:auto} we use class field theory to construct 
the field automorphism alluded to above. Section \ref{sec:hecke} uses Pila's
results to describe subvarieties that are almost invariant under Hecke
orbits with sufficiently large level.
Proposition \ref{prop:ind3} in Section \ref{sec:nonspecial} is
 a weak version of Theorem \ref{thm:ordmodtv} that cannot yet account for
 special subvarieties. In Section \ref{sec:serretate} we
 review  aspects of
 Serre-Tate theory that are required for treating special subvarieties
and proving Theorem \ref{thm:ordmodtv}
 in Section \ref{sec:special}. Finally, Proposition \ref{prop:approximate} is proved
 in the appendix.

The author heartily thanks  Thomas Scanlon for  productive discussions and 
especially for pointing  him towards  Serre-Tate theory which is a
crucial ingredient in the work at hand.

\section{Finding a Good Galois Element}
\label{sec:auto}

We let $\ord_p:\IC_p\rightarrow\IQ\cup\{\infty\}$ denote the valuation
on $\IC_p$ normalized such that $\ord_p(p)=1$.
As usual $\IZ_p$ is the ring of $p$-adic integers. We write $\IN$ for
the set of positive integers.

\begin{lemma}
\label{lem:order}
Let $\gamma \in 1 + p\IZ_p$ and suppose $D \in \IN$.
Then
$\ord_p(\gamma^D-1) = \ord_p(D) + \ord_p(\gamma-1)$ if $p\ge 3$ 
and
 $\ord_2(\gamma^D-1) \le \ord_2(D)+ \ord_2(\gamma^2-1) -1$ if $p=2$.
\end{lemma}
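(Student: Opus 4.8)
The plan is to prove this standard "lifting the exponent" type statement by induction on $\ord_p(D)$, first handling the case where $p \nmid D$ and then bootstrapping via the factorization of $\gamma^{pD'}-1$.

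For the case $p \ge 3$: write $\gamma = 1 + p^k u$ with $k = \ord_p(\gamma - 1) \ge 1$ and $u$ a unit (or more precisely $\ord_p(u) = 0$), noting $\gamma \in 1 + p\IZ_p$. First I would treat $D$ coprime to $p$: then $\gamma^D - 1 = (1 + p^k u)^D - 1 = D p^k u + \binom{D}{2} p^{2k} u^2 + \cdots$, and since $2k \ge k+1 > k$ and $\ord_p(D) = 0$, the first term dominates, giving $\ord_p(\gamma^D - 1) = k = \ord_p(D) + \ord_p(\gamma-1)$. Then I would show the single inductive step: if $D = pD'$, it suffices to prove $\ord_p(\gamma^p - 1) = 1 + \ord_p(\gamma-1)$ and apply the coprime case to $\gamma^p$ (whose associated exponent is now $k+1 \ge 2$) raised to the $D'$. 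For that, expand $\gamma^p - 1 = (1+p^k u)^p - 1 = p \cdot p^k u + \binom{p}{2} p^{2k}u^2 + \cdots + p^{pk}u^p$. The term $p^{k+1}u$ has valuation $k+1$; the term $\binom{p}{2}p^{2k}u^2$ has valuation $1 + 2k$ (since $p \ge 3$ divides $\binom{p}{2}$), which exceeds $k+1$ because $k \ge 1$; all later terms have valuation $\ge 3k \ge k+1$. Hence $\ord_p(\gamma^p-1) = k+1$, completing the induction.

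For $p = 2$: the subtlety is that $\binom{2}{2} = 1$ is not divisible by $2$, so the "cross term" no longer loses. Instead I would work with $\gamma^2 = 1 + v$ where $v = \gamma^2 - 1$ and set $m = \ord_2(\gamma^2 - 1) = \ord_2(v) \ge \ldots$; note $\gamma \in 1+2\IZ_2$ forces $\gamma^2 \in 1 + 8\IZ_2$ in general but we only need $m \ge 1$ (actually $m \ge 3$, though the bound as stated only uses $m$). Writing $D = 2^e D'$ with $D'$ odd and $e = \ord_2(D)$, we have $\gamma^D - 1 = (\gamma^2)^{2^{e-1}D'} - 1$ when $e \ge 1$; iterating the squaring lemma $\ord_2(w^2 - 1) = \ord_2(w-1) + \ord_2(w+1)$ and the odd-exponent case gives $\ord_2(\gamma^D - 1) = (e-1) + m + (\text{something} \ge 0)$ — here the inequality rather than equality arises precisely because each squaring step $\ord_2((1+2^s t)^2 - 1) = \ord_2(2^{s+1}t + 2^{2s}t^2) = s+1$ only when $s \ge 2$, whereas for $s = 1$ one can get a larger jump. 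When $e = 0$, i.e. $D$ odd, the odd-exponent expansion of $(\gamma^2)^{(D-1)/2}\cdot\gamma^2$ — or more simply writing $\gamma^D - 1$ and using $\gamma^D - 1 = \gamma(\gamma^{D-1}-1) + (\gamma - 1)$ — shows $\ord_2(\gamma^D-1) = \ord_2(\gamma - 1) \le \ord_2(\gamma^2 - 1) - 1 = m - 1 = \ord_2(D) + m - 1$ since $\ord_2(D) = 0$ and $\ord_2(\gamma^2-1) = \ord_2(\gamma-1) + \ord_2(\gamma+1) \ge \ord_2(\gamma-1) + 1$.

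**The main obstacle** I anticipate is bookkeeping the $p = 2$ case cleanly: one must track that every squaring of an element $1 + 2^s t$ with $s \ge 2$ contributes exactly $s + 1$, while the very first squaring (passing from $\gamma$, where possibly $s = 1$) may contribute more, and organizing this so the cumulative bound telescopes to $\ord_2(D) + \ord_2(\gamma^2 - 1) - 1$ rather than $\ord_2(D) + \ord_2(\gamma-1)$ — the shift by $-1$ absorbing exactly one squaring step. Using $\gamma^2$ as the base object from the outset, rather than $\gamma$, is what makes the inequality come out in the stated form. Everything else is routine binomial estimation with the ultrametric inequality.
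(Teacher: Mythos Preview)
Your plan is correct and would prove the lemma, but it takes a different route from the paper. The paper's proof is a two-line application of the $p$-adic logarithm: for $p\ge 3$ the map $\log:1+p\IZ_p\to p\IZ_p$ is a group isomorphism preserving valuation, so $\ord_p(\gamma^D-1)=\ord_p\log(\gamma^D)=\ord_p(D\log\gamma)=\ord_p(D)+\ord_p(\gamma-1)$; for $p=2$ one applies the same argument to $\gamma^2\in 1+4\IZ_2$ to get $\ord_2(\gamma^{2D}-1)=\ord_2(D)+\ord_2(\gamma^2-1)$ exactly, and then $\ord_2(\gamma^D-1)=\ord_2(\gamma^{2D}-1)-\ord_2(\gamma^D+1)\le \ord_2(D)+\ord_2(\gamma^2-1)-1$. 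Your approach is the elementary lifting-the-exponent argument via binomial expansion and induction on $\ord_p(D)$; it avoids the logarithm at the cost of more bookkeeping, but is entirely self-contained.

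One point to tighten in your $p=2$ sketch: when $e\ge 1$ you write $\ord_2(\gamma^D-1)=(e-1)+m+(\text{something}\ge 0)$, but this is a lower bound, and what you need is the upper bound $\le e+m-1$. In fact your own ingredients give \emph{equality} here: since $\gamma^2\in 1+8\IZ_2$, every squaring step starts from $s\ge 2$ and contributes exactly $+1$, so $\ord_2((\gamma^2)^{2^{e-1}D'}-1)=m+(e-1)$ on the nose. The only place strict inequality in the lemma can occur is the odd-$D$ case with $\gamma\equiv 3\bmod 4$, where $\ord_2(\gamma^D-1)=\ord_2(\gamma-1)=1$ while $m-1\ge 2$. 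Your remark that ``for $s=1$ one can get a larger jump'' points in the wrong direction for the inequality you are proving; just drop it and record the exact values above.
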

\begin{proof}
First we suppose that
$p\ge 3$. By Proposition II.5.5 \cite{Neukirch}
the $p$-adic logarithm $\log:1+p\IZ_p\rightarrow p\IZ_p$ is a homomorphism
 with 
$\ord_p \log(1+z) = \ord_p(z) $ for $z\in p\IZ_p\ssm\{0\}$. 
We apply these facts to $z = \gamma^D-1$ and $z=\gamma-1$ and
obtain
\begin{equation*}
  \ord_p(\gamma^D-1) = \ord_p \log(\gamma^D) = 
\ord_p (D \log\gamma) = \ord_p(D) + \ord_p\log\gamma
= \ord_p(D) + \ord_p (\gamma-1).
\end{equation*}

The logarithm has similar properties for $p=2$ albeit with a smaller domain of convergence. 
Now $\log :1+4\IZ_2\rightarrow 4\IZ_2$ satisfies
$\ord_2 \log(1+z) =\ord_2(z)$ for $z\in 4\IZ_2\ssm\{0\}$.
If $\gamma \in 1+2\IZ_2$, then 
 $\gamma^2 \in 1 + 4\IZ_2$. 
After replacing $\gamma$ by $\gamma^2$, the
 same argument as for odd primes yields 
\begin{equation*}
  \ord_2(\gamma^{2D}-1) = \ord_2(D) + \ord_2(\gamma^2-1).
\end{equation*}
But $\ord_2(\gamma^D - 1)  = \ord_2(\gamma^{2D}-1) - \ord_2(\gamma^D +
1)$ and so $\ord_2(\gamma^D + 1)\ge 1$ completes the proof.
\end{proof}

\begin{lemma}
\label{lem:matrix}
Let $k_0\in \IN$ and $D \in\IN$ with
  $k_0 \ge 2 \ord_p(2D)$
and assume
 $A_1,\ldots,A_n\in \gl{2}{\IQ_p}$.
Then there exist $\alpha,\beta\in \IQ_p^\times$ and $e\in\IZ$ 
such that the matrices
  \begin{equation}
\label{eq:conj}
    B_i =  A_i^{-1}\left(
    \begin{array}{cc}
       p^{-e} \alpha^D & \\ & p^{-e} \beta^D
    \end{array}\right)A_i.
  \end{equation}
satisfy 
 the following properties.
\begin{enumerate}
\item [(i)] 
For $1\le i\le n$ the matrix
$B_i$ has coefficients in $\IZ_p$ and there is an $i$ with
$B_i \not\in p\mat{2}{\IZ_p}$.
\item[(ii)] We have
  \begin{equation*}
k_0 \le     \ord_p(p^{-2e}\alpha^D\beta^D) \le 3Dk_0.
  \end{equation*}
\end{enumerate}
\end{lemma}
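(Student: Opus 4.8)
\emph{Plan.} I would turn the statement into a question about $p$-adic valuations and then write down $\lambda:=p^{-e}\alpha^D$ and $\mu:=p^{-e}\beta^D$ explicitly. First, put $J_i=A_i^{-1}RA_i$ with $R=\mathrm{diag}(1,-1)$, so that $J_i^2=I$, $\tr{J_i}=0$ and
$B_i=\tfrac{\lambda+\mu}{2}I+\tfrac{\lambda-\mu}{2}J_i$, with $\tr{B_i}=\lambda+\mu$ and $\det(B_i)=\lambda\mu$. Since $\det(J_i)=-1$ has valuation $0$ while a product of two entries of positive valuation has valuation $\ge 2$, some entry of $J_i$ is a unit; hence the least integer $k\ge 0$ with $p^kJ_i\in\mat{2}{\IZ_p}$ for all $i$ is defined, and for some $i$ the integral matrix $p^kJ_i$ lies outside $p\mat{2}{\IZ_p}$.

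I would then isolate two facts. (a) If $\lambda,\mu\in\IZ_p$, $\ord_p(\lambda-\mu)\ge k+\ord_p(2)$, and (for $p=2$) $\ord_2(\lambda+\mu)\ge 1$, then every $B_i$ has coefficients in $\IZ_p$: indeed $\tfrac{\lambda+\mu}{2}\in\IZ_p$ and $\ord_p(\tfrac{\lambda-\mu}{2})\ge k$. (b) If moreover $\ord_p(\lambda-\mu)= k+\ord_p(2)$, and (for $p=2$) $\ord_2(\lambda+\mu)\ge 2$, then some $B_i$ is not in $p\mat{2}{\IZ_p}$: taking $i$ with $p^kJ_i\notin p\mat{2}{\IZ_p}$, a unit off-diagonal entry of $p^kJ_i$ produces a unit off-diagonal entry of $B_i$ (because $\ord_p(\tfrac{\lambda-\mu}{2})=k$), whereas a unit diagonal entry makes $\tfrac{\lambda-\mu}{2}(J_i)_{11}$ a unit, so for $p$ odd the diagonal entries $\tfrac{\lambda+\mu}{2}\pm\tfrac{\lambda-\mu}{2}(J_i)_{11}$ of $B_i$ are not both divisible by $p$, and for $p=2$ the hypothesis $\ord_2(\lambda+\mu)\ge 2$ makes each of them a unit. (The $p=2$ sub-case in which all $J_i\equiv I\bmod 2$ forces $k=0$; I would dispose of it directly by taking $\lambda$ a unit and $\ord_2(\mu)\ge 1$, so that $B_i\equiv I+\tfrac12(J_i-I)\bmod 2$ has a unit on the diagonal.)

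It then remains to produce $\lambda=p^{\ell}\lambda_0$, $\mu=p^{m}\mu_0$ with $\ell,m\ge 0$, $\ell\equiv m\pmod D$ and $\lambda_0,\mu_0\in(\IZ_p^\times)^D$ (exactly the shape $p^{-e}\alpha^D,p^{-e}\beta^D$, with $e\equiv-\ell\bmod D$), meeting (a), (b) and $k_0\le \ell+m\le 3Dk_0$. If $k_0=0$ take $\alpha=\beta=1$, $e=0$, so $B_i=I$. Assume $k_0\ge 1$; for $p$ odd there are three cases. If $k=0$: let $M$ be the least multiple of $D$ with $M\ge k_0$ (so $M\le k_0+D\le 3Dk_0$) and take $\lambda=1$, $\mu=p^{M}$. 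If $k\ge 1$ and $2k+D\le 3Dk_0$: since $[k_0,3Dk_0]$ has length $\ge D$ it contains a term $2k+jD$ with $j\ge 1$; take $\lambda=p^{k}$, $\mu=p^{k+jD}$. If $k\ge 1$ and $2k+D>3Dk_0$: then $2k>D(3k_0-1)\ge 3k_0-1$, so $k\ge\lceil 3k_0/2\rceil$ and $r:=k-\lceil k_0/2\rceil\ge k_0\ge\ord_p(D)+1$ (using $k_0\ge\max\{1,2\ord_p(2D)\}$); since $(1+p\IZ_p)^D=1+p^{1+\ord_p(D)}\IZ_p$ by Lemma~\ref{lem:order}, take $\lambda=p^{\lceil k_0/2\rceil}$, $\mu=p^{\lceil k_0/2\rceil}(1+p^{r})$. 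In each case $\ord_p(\lambda-\mu)=k$ (which is $k+\ord_p(2)$ as $p$ is odd) and $\ord_p(\lambda\mu)\in[k_0,3Dk_0]$, so (a) and (b) give the lemma. For $p=2$ I would run the same three cases with $k+1$ in place of $k$ and the extra requirement $\ord_2(\lambda+\mu)\ge 2$, using $(1+4\IZ_2)^D=1+2^{2+\ord_2(D)}\IZ_2$ to realize the needed valuation of $\lambda_0-\mu_0$.

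The hard part is the bookkeeping in the last step: one must check that $[k_0,3Dk_0]$ is wide enough to catch a value $2k+jD$ whenever $k$ is small, and that when $k$ is large there is enough room below $k$ to prescribe the difference valuation $k-\lceil k_0/2\rceil$ by a $D$-th power of a principal unit — this is precisely where the hypothesis $k_0\ge 2\ord_p(2D)$ and the factor $3D$ get used. Treating $p=2$ uniformly (the degenerate sub-case, and the weaker $p=2$ form of Lemma~\ref{lem:order}) is the other point that needs care.
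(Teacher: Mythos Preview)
Your approach is correct and genuinely different from the paper's. Both arguments introduce an integer $k\ge 0$ measuring how far the $A_i$ are from being ``nice'', and both split into cases on $k$; but the organization is quite different.

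The paper works directly with the matrix entries. It normalizes $A_i\in\mat{2}{\IZ_p}$, writes out $B_i$ explicitly, and defines $k_i=\ord_p(\det A_i)-\min\{\ord_p(a_id_i),\ord_p(b_id_i),\ord_p(a_ic_i)\}$, $k=\max k_i$. There are only two cases: if $k\le k_0$ one takes $\alpha=1$, $\beta=p^{k_0}$, $e=-\max\{0,k\}$; if $k>k_0$ one takes $\alpha=p^{k_0}+p^k$, $\beta=p^{k_0}$ and sets $e=\ord_p(\alpha^D-\beta^D)-k$. Lemma~\ref{lem:order} is then used to bound $e$ from above, which yields the inequality $e\le Dk_0-k_0/2$ and hence (ii). The verification of (i) is a direct inspection of the four entries of $B_i$.

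Your decomposition $B_i=\frac{\lambda+\mu}{2}I+\frac{\lambda-\mu}{2}J_i$ with $J_i=A_i^{-1}RA_i$ is cleaner conceptually: it isolates the single quantity $k$ (the denominator of the $J_i$) and reduces (i) to the two valuation conditions on $\lambda\pm\mu$ that you call (a) and (b). Working with the eigenvalues $\lambda,\mu$ rather than with $\alpha,\beta,e$ directly postpones the translation to the very end. The price is more bookkeeping: you need three cases instead of two, you use the \emph{surjectivity} of $\gamma\mapsto\gamma^D$ on $1+p\IZ_p$ (not just Lemma~\ref{lem:order}, which gives injectivity), and $p=2$ requires the genuinely separate sub-case you flagged. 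The paper's choice $\alpha=p^{k_0}+p^k$ effectively merges your cases 2 and 3 and avoids the need to realize a prescribed principal unit as a $D$-th power.
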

\begin{proof}
The matrix (\ref{eq:conj}) is  invariant under replacing
 $A_i$  by
a scalar multiple of itself. So we may suppose that all $A_i$ lie in 
$\mat{2}{\IZ_p}$.

We set $\delta_i =
  \det{A_i}$ and write
  \begin{equation*}
    A_i = \left(
    \begin{array}{cc}
      a_i & b_i \\ c_i & d_i
    \end{array}
\right). 
  \end{equation*}
Then
\begin{equation*}
    B_i= A_i^{-1}\left(
    \begin{array}{cc}
      p^{-e} \alpha^D & \\ &p^{-e}\beta^D
    \end{array}\right)A_i
= \frac{1}{p^e \delta_i}\left(
\begin{array}{cc}
  a_i d_i (\alpha^D-\beta^D) + \delta_i \beta^D & b_id_i(\alpha^D-
\beta^D) \\
  -a_ic_i(\alpha^D-\beta^D) & -a_id_i(\alpha^D-\beta^D) + \delta_i
  \alpha^D
\end{array}\right)
\end{equation*}
where the values  $\alpha,\beta,$ and $e$ will be specified later
on.
We  define integers
  \begin{equation}
\label{eq:defineki}
    k_i = \ord_p(\delta_i)  - \min\{\ord_p(a_id_i),\ord_p(b_id_i),\ord_p(a_ic_i)\}.
  \end{equation}
  The proof is a case by case analysis depending on the value of
\begin{equation*}
  k = \max \{k_1,\ldots,k_n\} \in \IZ.
\end{equation*}

The first case is $k\le k_0$. Here we set
\begin{equation*}
  \alpha = 1,\quad \beta = p^{k_0},\quad \text{and}\quad
e = - \max\{0,k\}.
\end{equation*}
The valuation of any
\begin{equation}
\label{eq:fourvalues}
  \frac{a_id_i}{p^e\delta_i}(1-p^{k_0D}),
  \frac{b_id_i}{p^e\delta_i}(1-p^{k_0D}),
  -\frac{a_ic_i}{p^e\delta_i}(1-p^{k_0D}),
  -\frac{a_id_i}{p^e\delta_i}(1-p^{k_0D}),
\end{equation}
is at least $-k_i-e \ge -k-e \ge 0$. 
From this and since the additional terms
 $p^{k_0D} p^{-e}$ and $p^{-e}$
in the upper left and lower right entries of $B_i$
are in $\IZ_p$,
we deduce
$B_i\in \mat{2}{\IZ_p}$. 
If $k=k_i$, then one of the four elements (\ref{eq:fourvalues}) has
order precisely $-k-e$. If $k \ge 1$, then $-e=k\ge 1$ and said order
is $0$.
The corresponding entry of $B_i$ also has order $0$
and the additional terms are harmless.
Hence (i) holds for positive $k$.
 If $k\le 0$ then $e=0$. 
The 
 lower right entry of $B_i$ is $-a_id_i(1-p^{Dk_0 })/\delta_i + 1$.
It is not divisible by $p$ if $\ord_p(a_i d_i/\delta_i)\ge 1$. 
But $a_id_i/\delta\in \IZ_p$ and so otherwise it is a unit. But then 
$a_id_i(1-p^{Dk_0})/\delta_i + p^{D k_0}$, the upper
left entry of $B_i$,
is   not divisible by
$p$ as $Dk_0 \ge 1$. So (i) also holds if $k\le 0$.

To prove (ii) we note that
$\ord_p(p^{-2e}\alpha^D \beta^D) = 
-2e + Dk_0 = 2\max\{0,k\}+Dk_0\ge Dk_0$. But this order is at most
$3Dk_0$ because of $k\le k_0$.

The second case is $k\ge k_0 + 1$.
We set  values 
\begin{equation*}
  \alpha = p^{k_0} + p^{k},\quad \beta = p^{k_0},\quad\text{and}\quad
e = \ord_p(\alpha^D-\beta^D) - k.
\end{equation*}
Note that $\ord_p(\alpha) = \ord_p(\beta) = k_0$
and $\ord_p(\alpha^D-\beta^D) \ge \ord_p(\alpha-\beta)= k$. So $e\ge
0$ and
 \begin{alignat}1
\label{eq:ordlowerbound}
   \ord_p(a_id_i(\alpha^D-\beta^D)) -\ord_p\delta_i - e 
&= \ord_p(a_id_i) + k -\ord_p\delta_i \\
\nonumber
&\ge \min \{\ord_p(a_id_i),\ord_p(b_id_i),\ord_p(a_ic_i)\} + k_i -
   \ord_p\delta_i \\
\nonumber
&=0,
 \end{alignat}
where the final equality is (\ref{eq:defineki}).
A simple modification of this argument shows
\begin{equation}
\label{eq:ordlowerbound2}
  \ord_p(b_id_i(\alpha^D-\beta^D)) -\ord_p\delta_i - e \ge 0 
\quad\text{and}\quad
  \ord_p(a_ic_i(\alpha^D-\beta^D)) -\ord_p\delta_i - e \ge 0.
\end{equation}

We apply Lemma \ref{lem:order} to $\gamma = \alpha/\beta = 1+p^{k-k_0}$ 
and find
\begin{equation*}
  e = \ord_p(\alpha^D-\beta^D) -k \le D k_0 - k +
\ord_p(\gamma-1)+\ord_p(D) +\left\{
  \begin{array}{cc}
    0  &:\text{if $p\ge 3$,}\\
    \ord_2(\gamma+1) - 1 &:\text{if $p=2$}.
  \end{array}
\right.
\end{equation*}
We note that $\ord_p(\gamma-1) = k-k_0$ and
 $\ord_2(\gamma+1)=\ord_2(2^{k-k_0}+2)\le 2$ if $p=2$. 
All in all we obtain
 $ e\le Dk_0 - k_0 + \ord_p(2D)$
regardless of $p$.
The hypothesis $2\ord_p(2D)\le k_0$
implies
\begin{equation}
\label{eq:bounde}
e  \le Dk_0   - k_0/2.
\end{equation}
In particular, $Dk_0 - e \ge k_0/2 > 0$ and so 
the additional terms $\beta^Dp^{-e}$ and $\alpha^Dp^{-e}$
which appear in $B_i$ have positive order.
We have thus proved $B_i\in\mat{2}{\IZ_p}$.

However, one of the 3 inequalities in 
(\ref{eq:ordlowerbound}) and (\ref{eq:ordlowerbound2}) must be an
equality if $k_i=k$. The additional terms are again harmless since they
have positive order.
Therefore $B_i \not\in p\mat{2}{\IZ_p}$ 
for all $i$ with $k_i=k$. This conclude the proof of part (i).

The relevant order in  (ii) is
$-2e+\ord_p(\alpha^D\beta^D) = -2e + 2Dk_0 \le 2Dk_0$ since $e\ge 0$.
The lower bound follows from (\ref{eq:bounde}).
\end{proof}

A place $v$ of a number field $F$ is a non-trivial absolute value whose
restriction to $\IQ$ is either 
 the restricted complex absolute value or the $p$-adic absolute value. 
We call $v$ finite if it is an ultrametric absolute value and we call
$v$ infinite
otherwise. It is well-known that finite places are in natural
bijection with non-zero prime ideals of the ring of integers of $F$.
We let $F_v$ denote the completion of $F$ with respect to $v$.

Suppose for the moment that $L/F$ is a finite abelian extension of number
fields and let $w$ be a place of $L$ that extends $v$. 
Let $\IA^\times_F$ denote the id\`eles of $F$ and
 $(\cdot, L_w/F_v) : K_v^\times
\rightarrow \gal{L_w/F_v}$ is the  local norm residue symbol.
If $s=(s_v)_{v}\in \IA^\times_F$, then the global norm residue symbol
\begin{equation*}
  (s,L/F)  =  \prod_v (s_v, L_w/F_v) \in \gal{L/F}
\end{equation*}
is a product of the local norm residue symbols.
The decomposition group $\gal{L_w/F_v}$ is a subgroup of $\gal{L/F}$
and depends only on $v$  since $L/F$ is abelian. For brevity we write
$\decomp{v} = \gal{L_w/F_v}$.
By abuse of notation we will  write
$\gal{L_v/F_v}$ and $(\cdot,L_v/F_v)$ instead
of $\gal{L_w/F_v}$ and $(\cdot,L_w/F_v)$.

Suppose $\sigma$ is an automorphism of $L$ and let 
 $\mathfrak{p}$ be a prime ideal of the ring of
integers of $L$ corresponding to a finite place $v$.  Then $\sigma v$
is a finite place of $L$ that
corresponds to the prime ideal $\sigma(\mathfrak{p})$. 

We write $\tau$ for complex conjugation.

\begin{lemma}
\label{lem:idele}
  Let $x_1,\ldots,x_n\in \IC$ be   $j$-invariants 
of elliptic curves with complex multiplication by orders in 
imaginary quadratic fields $K_1,\ldots,K_n\subset\IC$, respectively.
We abbreviate $F = K_1\cdots K_n$ and $L=F(x_1,\ldots,x_n)$. 
We suppose that $p$ splits in all $K_i$ and let $v$ be a place of
$F$ above $p$. Then $v\not=\tau v$. For $\alpha,\beta\in\IQ_p^\times$ let 
$s$ be the id\`ele 
\begin{equation*}
   (\ldots,1,\underbrace{\alpha}_{v},\underbrace{\beta}_{\tau v},1,\ldots) \in \IA_F^\times.
\end{equation*}
Then the following properties hold.
\begin{enumerate}
\item [(i)]The extension $L/F$ is abelian.
\item [(ii)]  
For  each $1\le i\le n$ the restrictions $v|_{K_i}$ and $\tau v|_{K_i}$
  are distinct places of $K_i$. 
\item [(iii)] 
For any $1\le i\le n$ we have
\begin{equation*}
  (s,L/F)|_{K_i(x_i)} = 
   \Big((\ldots,1,\underbrace{\alpha}_{v|_{K_i}},\underbrace{\beta}_{\tau
    v|_{K_i}},1,\ldots),K_i(x_i)/K_i\Big).
\end{equation*}
\item[(iv)] 
We have  $(s,L/F) \in \decomp{v}$. 
\end{enumerate}

\end{lemma}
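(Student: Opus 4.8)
The strategy is to unwind the definitions of the global and local norm residue symbols, together with the compatibility of the Artin map under restriction of the base field, and to use the theory of complex multiplication to locate the image of the idèle. For part (i), the field $L = F(x_1,\ldots,x_n)$ is a compositum of the ring class fields $K_i(x_i)$ of the orders $\mathrm{End}(E_i)$ inside $K_i$; each $K_i(x_i)/K_i$ is abelian by the main theorem of complex multiplication, hence $K_i(x_i)/\mathbf{Q}$ is at worst generalized dihedral, and in particular $K_i(x_i)/K_i$ is abelian. Taking the compositum over $i$ with $F = K_1\cdots K_n$ one gets that $L/F$ is abelian, since a compositum of abelian extensions is abelian. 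For part (ii), I would argue that $v|_{K_i}$ and $\tau v|_{K_i}$ coincide only if the corresponding prime of $K_i$ above $p$ is fixed by complex conjugation, i.e.\ is inert or ramified; but $p$ splits in $K_i$ by hypothesis, so the two primes of $K_i$ above $p$ are swapped by $\tau$ and in particular distinct. (That $v\ne \tau v$ on $F$ itself follows the same way, applied to any single $K_i$, or directly since $p$ splits in $K_1$.)

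For part (iii), the key input is functoriality of the global Artin map under restriction: if $L \supset K_i(x_i) \supset K_i \supset \mathbf{Q}$ and $L/F$ is abelian, then for an idèle $s \in \mathbf{A}_F^\times$ one has $(s, L/F)|_{K_i(x_i)} = (N_{F/K_i}(s), K_i(x_i)/K_i)$, where $N_{F/K_i}$ is the idèle norm. So I would compute $N_{F/K_i}$ of the given idèle $s = (\ldots,1,\alpha_v,\beta_{\tau v},1,\ldots)$. The norm of an idèle supported at the two places $v, \tau v$ of $F$ lands in the idèle of $K_i$ supported at the places of $K_i$ below them, namely $v|_{K_i}$ and $\tau v|_{K_i}$, which by (ii) are distinct. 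The local component at $v|_{K_i}$ is the local norm $N_{F_v/K_{i,v|_{K_i}}}(\alpha)$; but $\alpha \in \mathbf{Q}_p^\times$, and since $p$ splits in $K_i$ we have $K_{i,v|_{K_i}} = \mathbf{Q}_p$, and moreover $F_v/\mathbf{Q}_p$ is an extension in which $\alpha$ sits in the base, so its norm is $\alpha^{[F_v:\mathbf{Q}_p]}$ — and here one must be a little careful: the claim as stated has $\alpha$, not a power of $\alpha$, in the $v|_{K_i}$-slot. The resolution is that the statement is being read with the normalization in which the relevant local degrees have been absorbed, or — more likely — that $F_v = \mathbf{Q}_p$ as well because $p$ splits completely in $F$ (it splits in each $K_i$, hence in the compositum $F$), so that $[F_v : \mathbf{Q}_p] = 1$ and the local norm is the identity on $\mathbf{Q}_p^\times$. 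Granting that $p$ splits completely in $F$, the norm idèle $N_{F/K_i}(s)$ is exactly $(\ldots,1,\alpha_{v|_{K_i}},\beta_{\tau v|_{K_i}},1,\ldots) \in \mathbf{A}_{K_i}^\times$, and (iii) follows from the restriction formula above.

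For part (iv), I would recall that the global norm residue symbol factors as $(s, L/F) = \prod_w (s_w, L_w/F_w)$, and the local symbol at a place $w$ is trivial whenever $s_w = 1$. Since $s$ is supported only at $v$ and $\tau v$, the product collapses to $(\alpha, L_v/F_v)\cdot(\beta, L_{\tau v}/F_{\tau v})$. The first factor lies in the decomposition group $\mathcal{D}(v) = \mathrm{Gal}(L_v/F_v)$ by definition of the local symbol. For the second factor, the point is that $\tau v$ and $v$ are both places of $F$ above $p$, and since $L/F$ is abelian the decomposition group depends only on the underlying place — but $\tau v \ne v$, so a priori $\mathcal{D}(\tau v)$ and $\mathcal{D}(v)$ are merely conjugate subgroups of $\mathrm{Gal}(L/F)$, which in the abelian case means they are equal. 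Hence $(\beta, L_{\tau v}/F_{\tau v}) \in \mathcal{D}(\tau v) = \mathcal{D}(v)$ as well, and the product lies in $\mathcal{D}(v)$.

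The main obstacle I anticipate is bookkeeping the local degrees correctly in part (iii): one must verify that $p$ splits completely (not merely splits) in $F = K_1\cdots K_n$, so that all the local fields at places above $p$ are $\mathbf{Q}_p$ and the idèle norms act as the identity on the $\mathbf{Q}_p^\times$-entries; this is what makes the clean formula in (iii) — with $\alpha$ and $\beta$ rather than powers of them — literally true. Everything else is a formal manipulation of the Artin map and its functoriality, for which I would cite the standard references (e.g.\ Neukirch's class field theory chapter, already in use in Lemma 1).
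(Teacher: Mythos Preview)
Your treatment of parts (i), (ii), and (iii) is correct and matches the paper's approach. In particular, your observation that $p$ splits \emph{completely} in $F = K_1\cdots K_n$ (since the Frobenius at $p$ restricts trivially to each $K_i$) is exactly what makes the id\`ele norm $N_{F/K_i}$ act as the identity on the $\IQ_p^\times$-entries of $s$, and this is precisely the content behind the paper's citation of Neukirch's Proposition VI.5.2.

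Part (iv), however, has a genuine gap. You assert that $\decomp{v}$ and $\decomp{\tau v}$ are ``merely conjugate subgroups of $\gal{L/F}$,'' and then conclude equality from abelianness. But there is no element of $\gal{L/F}$ carrying a place above $v$ to a place above $\tau v$: elements of $\gal{L/F}$ fix $F$ pointwise, hence fix its places. The conjugating element must be (a lift of) $\tau$, which lies in $\gal{L/\IQ}$ but \emph{not} in $\gal{L/F}$. Conjugation by such an outer element induces an automorphism of the abelian group $\gal{L/F}$, and a general automorphism need not preserve the subgroup $\decomp{v}$; abelianness alone is insufficient.

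The paper supplies the missing ingredients. First, one must check that $\tau$ actually restricts to an automorphism of $L$; this follows because each $K_i(x_i)/\IQ$ is Galois (a non-trivial fact from CM theory, cf.\ Lemma 9.3 in Cox), so $\tau(K_i(x_i)) = K_i(x_i)$ and hence $\tau(L)=L$. Second, and crucially, the CM structure gives the dihedral relation $\tau\sigma\tau|_L = \sigma^{-1}$ for all $\sigma \in \gal{L/F}$. Thus conjugation by $\tau$ on $\gal{L/F}$ is inversion, which preserves \emph{every} subgroup. From $\tau\decomp{v}\tau^{-1} = \decomp{\tau v}$ one then gets $\decomp{\tau v} = \decomp{v}^{-1} = \decomp{v}$, and your conclusion follows. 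Without this CM-specific input the argument does not close.
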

\begin{proof} 
The classical theory of complex multiplication  implies
that  each $K_i(x_i)/K_i$ is an abelian extension, cf. Chapter 10.3
\cite{Lang:elliptic}.
By Galois theory the extension $L/F$ is abelian and part (i) holds true.

Statement (ii) follows since $p$ splits in each $K_i$. In particular, $v\not=\tau v$.

Part (iii) follows
directly from Proposition VI.5.2 \cite{Neukirch} and (ii).

Now for part (iv).
By Lemma 9.3 \cite{Cox} the extension $K_i(x_i)/\IQ$ is Galois with 
group $\gal{K_i(x_i)/K_i}\rtimes \gal{K_i/\IQ}$ where complex conjugation
 acts as $\tau \eta\tau|_{K_i(x_i)}  = \eta^{-1}$ for $\eta\in\gal{K_i(x_i)/K_i}$. 
In particular, $\tau(K_i(x_i))=K_i(x_i)$ and so $\tau(L)=L$ since $L$ is generated by
the $K_i(x_i)$. 
For arbitrary $\sigma \in \gal{L/F}$ the
automorphisms $\tau \sigma \tau|_L$ and  $\sigma^{-1}$ coincide on $K_i(x_i)$ and hence
also on $L$.

We have
\begin{equation*}
  (s,L/F) = (\alpha,L_v/F_v)(\beta,L_{\tau v}/F_{\tau v})
\end{equation*}
where $(\alpha,L_v/F_v)\in \decomp{v}$ and
$(\beta,L_{\tau v}/F_{\tau v})\in \decomp{\tau v}$.
If $\sigma \in \decomp{\tau v}$, then $\sigma \tau|_L v = \tau|_L v$
and hence 
 $\tau\sigma^{-1} v = \tau|_L v$ since $\tau\sigma\tau|_L=\sigma^{-1}$. 
Eliminating $\tau$ yields $\sigma\in \decomp{v}$
and therefore $(\beta,L_{\tau v}/F_{\tau v})\in\decomp{v}$.
This completes the proof.
\end{proof}

If $N=(N_1,\ldots,N_n)$ is a tuple of positive integers, then 
we write $T_N \subset Y(1)^n\times Y(1)^n$ for the 
correspondence that connects  the two $i$-th
coordinates in each copy of $Y(1)^n$  by a  cyclic isogeny of 
degree $N_i$.
This correspondence can be expressed  explicitly using modular
polynomials $\Phi_1,\Phi_2,\Phi_3,\ldots\in \IZ[X,Y]$, cf. Chapter 5
\cite{Lang:elliptic} for properties.
More precisely, 
\begin{equation*}
  T_N = \{(x_1,\ldots,x_n,y_1,\ldots,y_n) \in Y(1)^n\times Y(1)^n;\,\,
\Phi_{N_1}(x_1,y_1)=\cdots = \Phi_{N_n}(x_n,y_n)=0 \}.
\end{equation*}
We call $T_N$ the Hecke correspondence of level $N$. 
By Theorem 3, Chapter 5.2 \cite{Lang:elliptic} we have 
$\deg_Y\Phi_N = \deg_X \Phi_N = \Psi(N)$ with 
\begin{equation}
\label{eq:modulardegree}
  \Psi(N) =N\prod_{\ell|N}\frac{\ell+1}{\ell}
\end{equation}
where the product runs over primes $\ell$.

Let $\pi_{1},\pi_2:Y(1)^n\times Y(1)^n\rightarrow Y(1)^n$ denote the
projections to the first and last tuple of $n$ coordinates. 

The modular polynomials are monic considered as polynomials in $X$ or
$Y$. Thus $\pi_1|_{T_N}:T_N \rightarrow Y(1)^n$ is a finite morphism
and
if $Z\subset Y(1)^n$ is Zariski closed,
then so is
\begin{equation*}
  T_N(Z) = \pi_1(\pi_2^{-1}(Z) \cap T_N) \subset Y(1)^n.
\end{equation*}
If $x\in Y(1)^n(\IC)$, we abbreviate the finite set
$T_N(\{x\})$ by $T_N(x)$.

\begin{proposition}
\label{prop:goodgalois}
Let $x_1,\ldots,x_n\in\IC$ be $j$-invariants
  of elliptic curves with complex multiplication by orders in
  imaginary quadratic number fields $K_1,\ldots,K_n$, respectively. 
Let $F= K_1\cdots K_n$ and $L=F(x_1,\ldots,x_n)$, then $L/F$
is abelian by the previous lemma.
Suppose $p$ splits in all $K_i$
and $v$ is a place of $F$ above $p$. 
Let
$k_0\in\IN$ and $D\in\IN$ satisfy
$k_0 \ge 2\ord_p(2D)$. There exist integers $k_1,\ldots,k_n\ge 0$ and $\sigma \in
\gal{L/F}$ with the following properties.
\begin{enumerate}
\item [(i)] We have $k_0\le \max\{k_1,\ldots,k_n\} \le 3Dk_0$. 
\item[(ii)] We have $\sigma \in \decomp{v}$. 
\item[(iii)] We have $\sigma^D(x_1,\ldots,x_n) \in
\hecke{x_1,\ldots,x_n}{(p^{k_1},\ldots,p^{k_n})}$.
\end{enumerate}
\end{proposition}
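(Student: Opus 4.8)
The plan is to combine the idele-theoretic description of the main theorem of complex multiplication with the matrix lemma, Lemma \ref{lem:matrix}. First I would recall from the classical theory that the action of $\sigma\in\gal{K_i(x_i)/K_i}$ on the CM point $x_i$ is governed, via the Artin map, by an idele $s^{(i)}\in\IA_{K_i}^\times$: if $\mathcal{O}_i$ is the order with $j(\mathcal{O}_i)=x_i$ and we fix an analytic parametrization $E_i=\IC/\mathfrak{a}_i$, then $(s^{(i)},K_i(x_i)/K_i)$ sends $x_i$ to the $j$-invariant of $\IC/s^{(i)-1}\mathfrak{a}_i$ (and the accompanying torsion structure transforms by multiplication by $s^{(i)}$). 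The key point is that the isogeny $E_i \to (E_i)^{\sigma}$ relating $x_i$ to its conjugate is cyclic of degree equal to the index $[\mathfrak{a}_i : s^{(i)}_{\rm fin}\mathfrak{a}_i]$ at the finite places, up to a principal-idele adjustment. Since $p$ splits in $K_i$ as $\mathfrak p_i\bar{\mathfrak p}_i$, the restrictions $v|_{K_i}$ and $\tau v|_{K_i}$ correspond to these two primes, which is exactly what Lemma \ref{lem:idele}(ii) provides.

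Next I would translate the problem into linear algebra over $\IQ_p$. Because $\mathcal{O}_i\otimes\IZ_p \cong \IZ_p\times\IZ_p$ under the splitting, the $p$-adic component of the lattice $\mathfrak a_i$, viewed inside $K_{i,v}\times K_{i,\tau v}\cong\IQ_p\times\IQ_p$, determines a matrix $A_i\in\gl{2}{\IQ_p}$ (a change-of-basis matrix between the standard $\IZ_p^2$ and $\mathfrak a_i\otimes\IZ_p$). Applying the idele $(\ldots,\alpha_{v|_{K_i}},\beta_{\tau v|_{K_i}},\ldots)$ multiplies the two coordinates by $\alpha$ and $\beta$, so the new lattice relative to the old one is described precisely by a matrix of the shape $B_i = A_i^{-1}\,{\rm diag}(p^{-e}\alpha^D,p^{-e}\beta^D)\,A_i$ as in \eqref{eq:conj} — the $D$-th power arising because we want $\sigma^D$, and the $p^{-e}$ factor being the harmless principal-idele normalization that clears denominators. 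Then Lemma \ref{lem:matrix} hands us $\alpha,\beta\in\IQ_p^\times$ and $e\in\IZ$ so that every $B_i$ has $\IZ_p$-coefficients with at least one $B_i\notin p\mat{2}{\IZ_p}$, and so that $\ord_p(p^{-2e}\alpha^D\beta^D)$ lies between $k_0$ and $3Dk_0$. Setting $k_i = \ord_p(\det B_i)$ and the global idele $s$ as in Lemma \ref{lem:idele}, and $\sigma = (s,L/F)$, I would check that the elementary divisors of $B_i$ on $\IZ_p^2$ are $\{p^{a_i},p^{b_i}\}$ with $a_i\le b_i$, $a_i\ge 0$ by (i), and $a_i+b_i = k_i$; after replacing $\sigma$ by an appropriate twist by a unit idele (which does not change the decomposition group or the $j$-invariants) one arranges $a_i=0$, so that the isogeny $E_i\to (E_i)^{\sigma^D}$ is cyclic of degree $p^{k_i}$, i.e. $\sigma^D(x_1,\ldots,x_n)\in \hecke{x_1,\ldots,x_n}{(p^{k_1},\ldots,p^{k_n})}$. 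Properties (i) and (ii) are then immediate: (ii) is Lemma \ref{lem:idele}(iv), and (i) follows because $\max k_i = \max\ord_p(\det B_i) \ge \ord_p(p^{-2e}\alpha^D\beta^D) - \min(\ldots) $ together with the bound in Lemma \ref{lem:matrix}(ii) — more precisely one reads off that $\max_i k_i$ equals exactly the quantity $\ord_p(p^{-2e}\alpha^D\beta^D)$ controlled in part (ii) of that lemma, up to the normalization that makes the smaller elementary divisor trivial.

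The main obstacle I expect is bookkeeping the dictionary between ideles and isogenies carefully enough to be sure that (a) the degree of the cyclic isogeny attached to $\sigma^D$ on the $i$-th coordinate is genuinely $p^{k_i}$ and not merely $p^{k_i}$ times a unit factor that could spoil the Hecke-orbit membership, and (b) the passage from $K_i(x_i)/K_i$ (where CM theory lives) up to $L/F$ (where the statement lives) is compatible — this is where Lemma \ref{lem:idele}(iii) is used to restrict $(s,L/F)$ to $K_i(x_i)$ and recognize it as the local action at $v|_{K_i}$ and $\tau v|_{K_i}$. A secondary subtlety is that the order $\mathcal{O}_i$ may be non-maximal, so $\mathfrak a_i$ is only a proper $\mathcal{O}_i$-lattice; but since $p$ is unramified (indeed split) in $K_i$ one may assume, after scaling, that $\mathfrak a_i\otimes\IZ_p$ is $\mathcal{O}_i\otimes\IZ_p = \IZ_p\times\IZ_p$-stable, and the whole computation takes place at $p$ where the non-maximality is invisible. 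Once these identifications are pinned down, the proof is just: extract $(\alpha,\beta,e)$ from Lemma \ref{lem:matrix}, form $s$ and $\sigma=(s,L/F)$, invoke Lemma \ref{lem:idele} for (i)-(iii) of that lemma, and conclude (i)-(iii) of the proposition.
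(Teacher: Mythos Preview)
Your overall architecture is correct and matches the paper: set up the matrices $A_i$ from the $p$-adic lattices, invoke Lemma~\ref{lem:matrix} for $\alpha,\beta,e$, define $\sigma$ via the id\`ele $s$ of Lemma~\ref{lem:idele}, and read off (ii) from Lemma~\ref{lem:idele}(iv) and (iii) from the Main Theorem of CM. The compatibility issues you flag in your last paragraph are exactly the ones handled by Lemma~\ref{lem:idele}(iii).

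There is, however, a genuine gap in how you extract the integers $k_i$. You set $k_i=\ord_p(\det B_i)$, but $\det B_i=p^{-2e}\alpha^D\beta^D$ is \emph{independent of $i$}, so your $k_i$ are all equal --- this cannot be right, since the proposition allows them to differ. More importantly, the isogeny $\IC/\Lambda_i\to\IC/\Omega_i$ produced by CM theory has kernel $\IZ/p^{a_i}\IZ\times\IZ/p^{b_i}\IZ$ with $a_i\ge b_i\ge 0$ the elementary-divisor exponents of $B_i$; this isogeny is \emph{not} cyclic when $b_i>0$, and your proposed fix (``twist $\sigma$ by a unit id\`ele to arrange $a_i=0$'') does nothing: a unit at $v$ and $\tau v$ acts by a $\IZ_p$-automorphism on the lattice and leaves the elementary divisors unchanged, and in any case a single global twist cannot normalize all $n$ coordinates simultaneously.

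The correct observation, which the paper uses, is that any such isogeny factors as multiplication by $p^{b_i}$ followed by a cyclic isogeny of degree $p^{a_i-b_i}$. Since $[p^{b_i}]$ does not change the $j$-invariant, one has $\sigma^D(x_i)\in T_{p^{k_i}}(x_i)$ with $k_i=a_i-b_i$. Then $k_i\le a_i+b_i=\ord_p(p^{-2e}\alpha^D\beta^D)\le 3Dk_0$ gives the upper bound in (i); for the lower bound, Lemma~\ref{lem:matrix}(i) guarantees some $B_i\notin p\mat{2}{\IZ_p}$, which forces $b_i=0$ for that index, whence $k_i=a_i+b_i\ge k_0$ by Lemma~\ref{lem:matrix}(ii). (A minor point: the paper takes $\sigma=(s,L/F)^{-1}$ rather than $(s,L/F)$, to match the sign convention in Lang's Main Theorem, and inserts the principal id\`ele $p^{-e}$ via Artin reciprocity rather than speaking of a ``normalization''.)
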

\begin{proof}
For brevity we write $H_i = K_i(x_i)$ and $v_i = v|_{K_i}$.
Recall that $H_i/K_i$ is an abelian extension.

For any $1\le i\le n$ let $E_i$  be an elliptic curve over $\IC$ with
$j$-invariant $x_i$. 
There is a lattice $\Omega_i$  inside $K_i$ such that 
 $E(\IC)$ and $\IC / \Omega_i$ are isomorphic complex tori. 
Since $p$ splits in each $K_i$  there
 are two distinct embeddings $K_i\hookrightarrow \IQ_p$, the first
 determined by $v_i$ and the second one by
$\tau v_i$. 

If $K$ is a field extension of $\IQ$ and $\ell$ a prime we write
$K_\ell = K\otimes_\IQ \IQ_\ell$. 
If $M$ is a $\IZ$-module then we write
$M_\ell= M\otimes_\IZ \IZ_\ell$.

 We may identify $(K_i)_p$ with
 $\IQ_p^2$ as $\IQ_p$-algebras using the two embeddings mentioned above. Elements of $(K_i)_p$
 will be represented by column vectors in $\IQ_p^2$.
We may identify
 $(\Omega_i)_p$ with a free $\IZ_p$-module of rank $2$
 inside $\IQ_p^2$. 
We fix  a $\IZ$-Basis of $\Omega_i$ and arrange its image in $\IQ_p^2$ as the columns
 of a matrix $A_i\in \gl{2}{\IQ_p}$.

  We apply Lemma \ref{lem:matrix} to the $A_i,k_0,$ and $D$
 in order to obtain $\alpha,\beta\in \IQ_p^\times$ and $e$.

Let $s$ be the id\`ele as in Lemma \ref{lem:idele} and
\begin{equation*}
s' =   (\ldots,p^{-e},p^{-e},p^{-e},\ldots) \in \IA_F^\times,
\end{equation*}
this is a principal id\`ele. 
We set
\begin{equation*}
  \sigma =  (s,L/F)^{-1} 
\end{equation*}
and use Artin reciprocity to deduce
\begin{equation*}
  \sigma^D = (s' s^D,L/F)^{-1}.
\end{equation*}
Part (ii) of the current lemma follows from Lemma \ref{lem:idele}(iv).

Say $1\le i\le n$. We abbreviate $A=A_i, H=H_i$, $K=K_i$, $x=x_i$,  and $v=v_i$.

Let $E^{\sigma^D}$ be an elliptic curve  over $\IC$ with
$j$-invariant $\sigma^D(x)$. 
We now apply 
the
Main Theorem of Complex Multiplication, Theorem 3 in Chapter 10.2
\cite{Lang:elliptic}, to  $E^{\sigma^D}$.
Indeed,
$E^{\sigma^D}(\IC)$  and $\IC / s_K \Omega$ are isomorphic
complex tori where $s_K\in \IA^\times_K$ is the id\`ele norm of
$s' s^D$.

The lattice $\Lambda = s_K\Omega$ is determined locally by
$\Lambda_\ell$ at all
primes $\ell$ as follows. If $\ell\not=p$, then $\Lambda_\ell
= \Omega_\ell$ since the id\`ele $s_K$ has entry $p^e$ at
all places above $\ell$.
 However, things are different when $\ell=p$. 
Since the columns of $A$
are a $\IZ_p$ basis of $\Omega_p$,
 the columns of 
\begin{equation*}
  \left(
  \begin{array}{ll}
    p^{-e}\alpha^D & \\ & p^{-e}\beta^D
  \end{array}\right)A
\end{equation*}
constitute  
 a $\IZ_p$-basis of
 $\Lambda_p$.
By the first statement  of (i) in Lemma \ref{lem:matrix} we have
$\Lambda_p \subset \Omega_p$.
The Elementary Divisor Theorem implies
 $[\Omega_p:\Lambda_p] = p^{\ord_p(p^{-2e}\alpha^D \beta^D)}$
 and that
$\Omega_p/\Lambda_p \cong \IZ/p^{a}\IZ\times \IZ/p^b\IZ$
for integers $a\ge b$. 
Moreover,  $b=0$ if
\begin{equation}
\label{eq:cyclic}
 A^{-1}\left(
\begin{array}{ll}
      p^{-e}\alpha^D & \\ & p^{-e}\beta^D
\end{array}
\right)A \not\in p\mat{2}{\IZ_p}.
\end{equation}
By the Main Theorem of Complex Multiplication, the natural map
 $\IC/\Lambda\rightarrow\IC/\Omega$ 
restricted to the respective torsion groups is given by
$K/\Lambda \rightarrow K/\Omega$. 
We obtain a commutative diagram where the horizontal maps are 
the natural group isomorphisms
\begin{equation*}
 \xymatrix
{
\ar[d] K/\Lambda \ar[r]  &  \bigoplus_\ell K_\ell / \Lambda_\ell
\ar[d] \\   
K/\Omega   \ar[r]&  \bigoplus_\ell K_\ell / \Omega_\ell
}
\end{equation*}
Thus the kernel of $K/\Lambda\rightarrow K/\Omega$ is isomorphic
to $\Omega_p/\Lambda_p$.
Therefore, $\IC/\Lambda\rightarrow\IC/\Omega$ 
is an isogeny of degree
$p^{a+b}$. It is not cyclic if $b>0$. 
This yields part (iii) of the proposition.
Of course $k$ depends on $i$. We remark that
\begin{equation*}
k=  a-b \le a +b = \ord_p(p^{-2e}\alpha^D\beta^D) \le 3Dk_0
\end{equation*}
by (ii) of Lemma \ref{lem:matrix}. So the
upper bound for $\max\{k_1,\ldots,k_n\}$ in (i) of the assertion holds
true. 

To complete the proof of the proposition
we need to prove the lower bound as well.
Now  $\Omega_p/\Lambda_p$ is cyclic if
(\ref{eq:cyclic}) holds true. 
But this is the case for some $A=A_i$ by
the second statement of (i) in Lemma \ref{lem:matrix}. For such an $i$
we have $k_i=\ord_p(p^{-2e}\alpha\beta) \ge k_0$. This completes the proof of
(i).
\end{proof}

\section{Hecke Translates}
\label{sec:hecke}

We briefly recall the definition of weakly  special,  special, and
strongly special
 subvarieties of $Y(1)^n$.

Let $S_0,\ldots,S_r$ be a partition of $\{1,\ldots,n\}$ where $S_0$
may be empty but the $S_1,\ldots, S_r$ are non-empty.
Let $s_j = \#S_j$ and let us write
$S_j = \{i_{j1},\ldots,i_{js_j}\}$ with 
$i_{j1}<i_{j2}<\cdots < i_{js_j}$. 
A weakly special subvariety of $Y(1)^n$ is an irreducible Zariski
closed set $S$ 
determined by
\begin{equation*}
   \Phi_{N_{jk}}(x_{i_{j1}},x_{i_{jk}}) = 0
\text{ for all $1\le j\le r$, $2 \le k\le s_j$  and $x_i=c_i$ for all
  $i\in S_0$}.
\end{equation*}
where $N_{ij}$ are positive integers
and $c_i$ are fixed elements of $\IC_p$ (or $\IC$) for all $i\in S_0$. 

If all $c_i$ are singular moduli then we call $S$ 
 a special subvariety of $Y(1)^n$.

If $S_0=\emptyset$ then we call $S$ a strongly special subvariety of
$Y(1)^n$.

To ease notation we often omit the suffix ``of $Y(1)^n$'' when
speaking of weakly special, special, or strongly special subvarieties
of $Y(1)^n$. 

 Edixhoven \cite{Edixhoven:05} showed that this definition of special
 subvariety is
 equivalent of   one that is more natural from the point
 of view of Shimura varieties. 

  Let $X\subset Y(1)^n$ be a  subvariety. 
We define $\geodesic{X}$ to be the union $\bigcup_{Z\subset X} Z$
where  $Z$
runs over all weakly special subvarieties  with $\dim Z \ge 1$ that
are contained in $X$. 

The following proposition is a quick consequence of Pila's Structure
Theorem on weakly special subvarieties.

\begin{proposition}
\label{prop:geodesic}
  Let $X$ be an irreducible closed subvariety of $Y(1)^n$  both taken as
 over $\IC$. 
  There are finitely many strongly special subvarieties $S_i \subset
  Y(1)^{n-n_i}$  $(1\le i\le s)$ with $\dim S_i \ge 1$ and $0\le n_i\le n-1$,  subvarieties $W_i \subset
  Y(1)^{n_i}$, and coordinate permutations $\rho_i :Y(1)^n\rightarrow
  Y(1)^n$ such that
  \begin{equation*}
    \geodesic{X} = \bigcup_{i=1}^s \rho_i(S_i\times W_i). 
  \end{equation*}
\end{proposition}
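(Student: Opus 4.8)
The plan is to show that $\geodesic{X}$ is already a finite union of weakly special subvarieties of dimension $\ge 1$, and then to decompose each such weakly special subvariety according to whether a coordinate is ``free'' or constant. For the first point, I would invoke Pila's Structure Theorem: among all weakly special subvarieties $Z$ of dimension $\ge 1$ contained in $X$, only finitely many are maximal (with respect to inclusion), because the maximal ones are among the irreducible components of a certain constructible set cut out in $X$. Since every weakly special $Z \subset X$ with $\dim Z \ge 1$ is contained in one of these finitely many maximal ones, we get $\geodesic{X} = Z_1 \cup \cdots \cup Z_s$ with each $Z_i$ weakly special of dimension $\ge 1$ and maximal in $X$.

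Next I would unpack the definition of a weakly special subvariety. Given $Z_i$, it is determined by a partition $S_0, S_1, \ldots, S_r$ of $\{1,\ldots,n\}$, positive integers $N_{jk}$, and constants $c_i \in \IC$ for $i \in S_0$. Up to a coordinate permutation $\rho_i$, I can assume $S_0 = \{n_i+1, \ldots, n\}$ for some $0 \le n_i \le n-1$, so that after applying $\rho_i^{-1}$ the variety $Z_i$ becomes $S_i \times W_i \subset Y(1)^{n_i} \times Y(1)^{n-n_i}$, where $S_i \subset Y(1)^{n_i}$ is cut out purely by modular-polynomial relations $\Phi_{N_{jk}}(x_{i_{j1}}, x_{i_{jk}}) = 0$ (hence $S_0 = \emptyset$ for it, i.e.\ $S_i$ is strongly special) and $W_i = \{(c_{n_i+1}, \ldots, c_n)\}$ is a single point in $Y(1)^{n-n_i}$. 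The requirement $\dim Z_i \ge 1$ forces $\dim S_i \ge 1$, so $S_i$ is not a point and in particular $n_i < n$. This gives exactly the asserted shape $\geodesic{X} = \bigcup_{i=1}^s \rho_i(S_i \times W_i)$, with $W_i$ trivially an (irreducible) subvariety of $Y(1)^{n_i}$.

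The main obstacle is the first step: correctly citing and applying Pila's Structure Theorem to guarantee the \emph{finiteness} of the collection of maximal weakly special subvarieties of dimension $\ge 1$ contained in $X$. One must be careful that weakly special subvarieties come with continuously varying constants $c_i$, so a priori there could be a continuous family of them inside $X$; the content of Pila's theorem (building on the André--Pink--Zannier circle of ideas) is precisely that the union of all positive-dimensional weakly special subvarieties in $X$ is itself a finite union of such, i.e.\ the family does not move in a way that escapes a finite list. I would state this carefully and reduce the proposition to it. The remaining bookkeeping — choosing the permutation $\rho_i$ to isolate $S_0$ in the last coordinates, checking that what is left is strongly special, and checking dimensions — is routine and I would not grind through it in detail.
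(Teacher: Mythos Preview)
There is a genuine gap. You assert that only finitely many \emph{maximal} positive-dimensional weakly special subvarieties lie in $X$, and conclude that each $W_i$ is a single point. This is false in general. Take $X\subset Y(1)^3$ defined by $x_1+x_2=0$. For every $a\in\IC$ the curve $\{(a,-a)\}\times Y(1)$ is weakly special of dimension $1$ and lies in $X$; for generic $a$ (say $a$ transcendental) it is not contained in any larger weakly special subvariety of $X$. Hence there are uncountably many maximal ones, and $\geodesic{X}=X$, which is not itself weakly special. So the first step of your plan---writing $\geodesic{X}$ as a finite union of weakly special subvarieties---cannot succeed.

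What Pila's Proposition~13.1 actually gives is weaker: if $\rho(S\times\{x'\})$ is maximal weakly special in $X$, then the \emph{strongly special type} $(\rho,S)$ belongs to a finite list depending only on $X$; the point $x'$ is not controlled by that result. The paper's proof then adds a second step you are missing: for each fixed $(\rho,S)$ from this finite list, the set of $x'$ with $\rho(S\times\{x'\})\subset X$ is Zariski closed in $Y(1)^{n_i}$ (by the theorem on fibre dimension), and $W_i$ is taken to be an irreducible component of that locus. Thus the $W_i$ in the statement are genuine, possibly positive-dimensional subvarieties collecting a continuous family of weakly special subvarieties of a fixed type---in the example above, $S_i=Y(1)$ and $W_i$ is the curve $\{x_1+x_2=0\}\subset Y(1)^2$.
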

\begin{proof}
  Any weakly special subvariety of $Y(1)^n$ of positive dimension
 is of the form
$\rho(S\times \{x'\})$ with $S\subset Y(1)^{n-n'}$ strongly special,
 $\rho$ a coordinate permutation,  
and $x'$ a point. The case $n'=0$ is possible. 

If for some choice of $x'$ the
 weakly special subvariety $\rho(S\times \{x'\})$ is
maximal among all weakly special subvarieties contained in $X$, then 
 $S$ must come from a finite set depending only on $X$
by Proposition 13.1 \cite{Pila:AO}.

The theorem on the dimension of a fiber of morphism, cf.  Exercise
 II.3.22
\cite{Hartshorne}, implies
that the condition  $x'\in Y(1)^{n'}(\IC)$ with $\rho(S\times\{x'\})\subset X$
determines a
 Zariski closed set for a fixed $S$. 
The proposition follows as we may restrict to finitely many $S$.
\end{proof}

The  full Hecke orbit of a point $x\in Y(1)^n(\IC)$ is 
\begin{equation*}
\ho{x}=  \bigcup_{N\in\IN^n} T_N(x).
\end{equation*}

\begin{theorem}[Pila]
\label{thm:modularmordell}
 Let $X$ be an irreducible closed subvariety of $Y(1)^n$ both taken as
 over $\IC$. If $x\in X(\IC)$   
then $(X\ssm\geodesic{X})\cap \ho{x}$ is finite. 
\end{theorem}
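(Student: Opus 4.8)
The plan is to follow the Pila--Zannier point-counting strategy; indeed the statement is Pila's theorem \cite{Pila:ellipticsurfaces}, extending \cite{hp:beyondAO}, so I will only indicate the shape of the argument. I would induct on $\dim X$: the statement is trivial when $\dim X=0$, and also when $\geodesic{X}=X$ (in particular when $X$ is weakly special), since then $X\ssm\geodesic{X}=\emptyset$. So assume $\dim X\ge 1$ and $\geodesic{X}\subsetneq X$, and suppose for contradiction that $(X\ssm\geodesic{X})\cap\ho{x}$ is infinite; let $Y$ be its Zariski closure. If $Y\ne X$, then each irreducible component $Y'$ of $Y$ has $\dim Y'<\dim X$ and, as $Y'\subseteq X$, satisfies $\geodesic{Y'}\subseteq\geodesic{X}$; since $\ho{z}=\ho{x}$ for every $z\in\ho{x}$, the inductive hypothesis applied to $Y'$ with a base point in $Y'$ shows that $(Y'\ssm\geodesic{Y'})\cap\ho{x}$ --- which contains $(X\ssm\geodesic{X})\cap\ho{x}\cap Y'$ --- is finite, and summing over the components contradicts infinitude. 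Hence I may assume $(X\ssm\geodesic{X})\cap\ho{x}$ is Zariski dense in $X$.

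Next I would transfer to the uniformisation $j^{\times n}\colon\mathbb{H}^n\to Y(1)^n(\IC)$. Fix a fundamental domain $\mathcal{F}\subset\mathbb{H}$ for ${\rm SL}_2(\IZ)$ on which $j$ is definable in the o-minimal structure $\IR_{\rm an,exp}$ (Peterzil--Starchenko), and choose $\tau=(\tau_1,\ldots,\tau_n)\in\mathcal{F}^n$ with $j(\tau_i)=x_i$. A point of $\hecke{x}{N}$, $N=(N_1,\ldots,N_n)$, has the form $\big(j(g_1\tau_1),\ldots,j(g_n\tau_n)\big)$ with $g_i\in\mat{2}{\IZ}$ primitive of determinant $N_i$; multiplying $g_i$ on the left by an element of ${\rm SL}_2(\IZ)$ I may take $g_i\tau_i\in\mathcal{F}$, and then, $\tau_i$ and $g_i\tau_i$ both lying in $\mathcal{F}$, the entries of $g_i$ are bounded by a fixed power of $N_i$. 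Thus each Hecke point of $x$ lying in $X$ corresponds to a rational point, of height bounded by a fixed power of its level $\max_iN_i$, of the definable set
\[
 Z=\big\{\,g=(g_1,\ldots,g_n)\in({\rm GL}_2^+(\IR))^n:\ g_i\tau_i\in\mathcal{F}\text{ for all }i,\ \big(j(g_1\tau_1),\ldots,j(g_n\tau_n)\big)\in X(\IC)\,\big\}.
\]

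The core of the argument is a counting estimate feeding into the Pila--Wilkie theorem. Using the density reduction I would fix $y\in(X\ssm\geodesic{X})\cap\hecke{x}{N}$ of large complexity $\Delta=\max_iN_i$ and produce $\gg\Delta^{\delta}$ companions of $y$, for a fixed $\delta>0$, again lying in $\hecke{x}{N}\cap X$ and of complexity $\Delta$. In the algebraic setting these are the Galois conjugates of $y$ over the field generated by the coordinates of $x$ and a field of definition of $X$; they are numerous because $\Psi(N_i)\ge N_i$ and $\Phi_{N_i}(x_i,\,\cdot\,)$ factors only in controlled ways over that field, the extreme case being that of a singular modulus, where one invokes Siegel's lower bound for class numbers. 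Over $\IC$ one must substitute for this Galois step. By the previous paragraph the companions yield $\gg\Delta^{\delta}$ rational points of $Z$ of height at most a fixed power of $\Delta$, so the block form of the Pila--Wilkie theorem, applied with some $\epsilon<\delta$, covers all these points by $O(\Delta^{\epsilon})$ connected semialgebraic blocks contained in $Z$; hence one block $B$ contains $\gg\Delta^{\delta-\epsilon}$ of them and, for $\Delta$ large, is positive-dimensional.

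Finally I would pass back to geometry. The set $\{(g_1\tau_1,\ldots,g_n\tau_n):g\in B\}\subseteq\mathbb{H}^n$ is semialgebraic with $j^{\times n}$-image inside $X$; in the degenerate case that it collapses to a point the companions all equal $x$, impossible for $\Delta$ large, so it is positive-dimensional and lies in a maximal algebraic subvariety $V\subseteq\mathbb{H}^n$ with $j^{\times n}(V)\subseteq X$. By Pila's modular Ax--Lindemann theorem \cite{Pila:AO}, $j^{\times n}(V)$ is a positive-dimensional weakly special subvariety of $Y(1)^n$ contained in $X$, hence in $\geodesic{X}$ by definition (cf.\ Proposition \ref{prop:geodesic}). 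So arbitrarily many companions of $y$ lie in $\geodesic{X}$; as the companions form a single conjugacy class this is an all-or-nothing condition and forces $y\in\geodesic{X}$, contradicting $y\in X\ssm\geodesic{X}$. Since an infinite $(X\ssm\geodesic{X})\cap\ho{x}$ would contain points of arbitrarily large complexity, this completes the contradiction. I expect the hard part to be the counting step: over $\IC$ one must recover both the abundance of companions and their all-or-nothing behaviour without Galois theory --- precisely the advance of \cite{Pila:ellipticsurfaces} over \cite{hp:beyondAO} --- whereas the Ax--Lindemann input and the block analysis are available off the shelf from \cite{Pila:AO}.
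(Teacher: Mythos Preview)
The paper does not prove this statement at all: its entire proof reads ``This is an immediate consequence of Theorem 6.2 \cite{Pila:ellipticsurfaces} which holds for varieties over $\IC$.'' So there is nothing to compare against beyond the bare citation, and your opening sentence already matches the paper's treatment exactly.

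What you have written beyond that is a reasonable outline of the Pila--Zannier strategy underlying \cite{Pila:ellipticsurfaces}, and you correctly isolate the crux: over $\IC$ there is no Galois action, so both the lower bound on the number of companions and the all-or-nothing propagation into $\geodesic{X}$ require genuine new input, which is precisely the content of Pila's paper. One small internal inconsistency worth flagging: in your final paragraph you invoke ``the companions form a single conjugacy class'' to force $y\in\geodesic{X}$, and only afterwards concede that this Galois step is unavailable over $\IC$. As written, then, your sketch is the $\IQbar$-argument of \cite{hp:beyondAO} with an honest note that \cite{Pila:ellipticsurfaces} is needed to close the gap; that is fine as an exposition, but it is not an independent proof, and the paper does not pretend to give one either.
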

\begin{proof}
This is an immediate consequence of Theorem 6.2
\cite{Pila:ellipticsurfaces} which holds for varieties over $\IC$.
\end{proof}

We say that a subvariety of $Y(1)^n$
 has a special factor if, after possibly permuting
coordinates, it is of the form $S\times W$ with
\begin{equation*}
  S\subset Y(1)^{n-n'} \quad\text{a special subvariety where $n-n'\ge 1$}
\end{equation*}
and $W\subset Y(1)^{n'}$ Zariski closed. 

 Theorem \ref{thm:modularmordell} is used to prove the  following
proposition.

\begin{proposition}
\label{prop:TNX}
   Let $X\subset Y(1)^n$ be a subvariety  without a special factor.
Then there exists $N_0$ such that 
\begin{equation}
\label{eq:TNX}
  \text{if $N\in \IN^n$ and $|N|\ge N_0$, then } X\not\subset T_N(X).
\end{equation}
\end{proposition}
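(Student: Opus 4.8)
The plan is to argue by contradiction using Theorem \ref{thm:modularmordell}. Suppose no such $N_0$ exists. Then there is a sequence of levels $N^{(1)}, N^{(2)}, \ldots \in \IN^n$ with $|N^{(m)}| \to \infty$ and $X \subset T_{N^{(m)}}(X)$ for all $m$. Unwinding the definition of $T_{N^{(m)}}(X) = \pi_1(\pi_2^{-1}(X) \cap T_{N^{(m)}})$, the inclusion $X \subset T_{N^{(m)}}(X)$ says that for every point $x \in X(\IC)$ there exists $y = y(x,m) \in X(\IC)$ with $(x,y) \in T_{N^{(m)}}$, i.e.\ $x$ and $y$ are joined coordinatewise by cyclic isogenies of degrees $N^{(m)}_1,\ldots,N^{(m)}_n$. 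In particular, fixing once and for all a point $x_0 \in X(\IC)$, we get $y_m \in X(\IC) \cap T_{N^{(m)}}(x_0)$ for each $m$, so the infinite set $\{y_m : m \ge 1\}$ is contained in $X(\IC) \cap \ho{x_0}$, where $\ho{x_0} = \bigcup_{N}T_N(x_0)$ is the full Hecke orbit.

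Next I would invoke Theorem \ref{thm:modularmordell} to deduce that $X(\IC) \cap \ho{x_0}$ meets $X \ssm \geodesic{X}$ in only finitely many points; hence all but finitely many of the $y_m$ lie in $\geodesic{X}(\IC)$. By Proposition \ref{prop:geodesic}, $\geodesic{X} = \bigcup_{i=1}^s \rho_i(S_i \times W_i)$ is a finite union with each $S_i \subset Y(1)^{n-n_i}$ strongly special of positive dimension and $n_i \le n-1$. So after passing to a subsequence we may assume all $y_m$ lie in a single component $\rho_i(S_i \times W_i)$. This is where the argument must exploit that $X$ has \emph{no special factor}: the point $x_0$ was arbitrary in $X(\IC)$, so the conclusion ``$X(\IC) \cap T_{N^{(m)}}(x_0) \cap \geodesic{X} \neq \emptyset$ for infinitely many $m$'' holds with $x_0$ ranging over all of $X(\IC)$, not just one point. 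I would run the pigeonhole over components of $\geodesic{X}$ more carefully: for each $x_0 \in X(\IC)$ infinitely many $y_m(x_0)$ land in some component, and one shows — using that $\geodesic X$ is a proper closed subset and that the correspondence $T_N$ has finite fibers of controlled degree $\Psi(N_i)$ — that this forces $X$ itself to be contained in (a Hecke translate of) such a component. Since each component $\rho_i(S_i\times W_i)$ with $S_i$ strongly special of positive dimension visibly \emph{does} have a special factor (a strongly special subvariety is in particular special), and since ``having a special factor'' is inherited by Hecke translates in the relevant coordinates, we would obtain that $X$ has a special factor, contradicting the hypothesis.

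To make the middle step precise I would phrase it as: if $X \subset T_N(Z)$ for $Z$ a fixed proper closed subset and $N$ ranges over a set of unbounded size, then comparing dimensions and degrees (the map $\pi_1|_{T_N}$ is finite of degree $\prod_i \Psi(N_i)$, which is unbounded), $X$ must be contained in the union of the images of the positive-dimensional weakly special pieces, i.e.\ in $\geodesic X$ up to a Hecke translate, forcing $\dim X \le \dim \geodesic X$ and ultimately the special-factor structure; an induction on $n$ handles the bookkeeping, the base case $n=1$ being immediate since then $\geodesic X$ can only be positive-dimensional if $X = Y(1)$, which has a (trivial) special factor. The main obstacle I anticipate is precisely this passage from ``infinitely many Hecke translates of each point meet $\geodesic X$'' to ``$X$ has a special factor'': one has to rule out the scenario where, as $m$ varies, the $y_m$ spread across genuinely different translates without any single translate containing $X$, and this is where the finiteness in Theorem \ref{thm:modularmordell} together with an induction on the number of coordinates has to be used to trap $X$ inside a special factor. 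The isogeny-degree growth $\Psi(N_i)\to\infty$ and the combinatorial description of special subvarieties from Section \ref{sec:hecke} should supply exactly what is needed.
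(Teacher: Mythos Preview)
Your overall shape is right and matches the paper's: argue by contradiction, pick a point $x_0\in X(\IC)$, pull back along the $T_{N^{(m)}}$ to get Hecke-related points $y_m\in X(\IC)\cap \ho{x_0}$, and confront this with Theorem~\ref{thm:modularmordell}. But the step you yourself flag as the main obstacle---passing from ``for every $x_0$, infinitely many $y_m(x_0)$ land in $\geodesic X$'' to ``$X$ has a special factor''---is a genuine gap, and the paper does \emph{not} fill it by your proposed degree/dimension count or an induction on $n$. Instead, the paper sidesteps the whole issue by choosing $x_0$ cleverly \emph{before} running the argument.

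Here is what you are missing. First, one uses the no-special-factor hypothesis together with Proposition~\ref{prop:geodesic} to see that $Z:=\geodesic X$ is a Zariski closed subset of $X$ of strictly smaller dimension (if $\dim Z=\dim X$ then one $\rho_i(S_i\times W_i)$ equals $X$, giving a special factor). Next, one sets
\[
\Sigma_1=\bigcup_{m\ge 1} X(\IC)\cap T_{N^{(m)}}(Z)(\IC),\qquad
\Sigma_2=\{(x_1,\ldots,x_n)\in X(\IC):\text{some }x_i\text{ is a singular modulus}\}.
\]
After a preliminary reduction ensuring no coordinate function is constant on $X$ (handled separately using that a non-CM $j$-invariant admits at most one cyclic isogeny to itself of each degree), each of $\Sigma_1,\Sigma_2$ is a \emph{countable} union of proper Zariski closed subsets of $X$. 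Baire category (or just uncountability of $\IC$) then produces $x_0\in X(\IC)\smallsetminus(\Sigma_1\cup\Sigma_2)$. Now the argument closes immediately: the condition $x_0\notin\Sigma_2$ guarantees, since $N^{(m)}_1\to\infty$ after permuting coordinates, that the $y_m$ are pairwise distinct (two non-CM curves admit at most one cyclic isogeny); hence Theorem~\ref{thm:modularmordell} forces some $y_m\in Z$; but then $x_0\in T_{N^{(m)}}(y_m)\subset T_{N^{(m)}}(Z)\subset\Sigma_1$, contradicting the choice of $x_0$.

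So two concrete repairs: (a) you assert ``the infinite set $\{y_m\}$'' without justification---this fails if $x_0$ has a CM coordinate, and the paper's $\Sigma_2$ is exactly what rules this out; (b) your ``this forces $X$ into a Hecke translate of a component, hence $X$ has a special factor'' is not established, and the Baire-category pre-selection of $x_0\notin\Sigma_1$ is the missing idea that makes the contradiction instantaneous without ever needing that implication.
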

Before we come to the proof we briefly discuss the connection to 
 Edixhoven's  Theorem 4.1 \cite{Edixhoven:05} which, under a suitable
 restriction,
draws  a similar conclusion. The restriction 
requires all coordinates of $N$ to be equal.
More importantly, the prime divisors of the level need  to be large
with respect to $X$. In our application the coordinates  are powers of
the fixed prime $p$ which is unrelated to $X$.

\begin{proof}[Proof of Proposition \ref{prop:TNX}]
By the Lefschetz principle it suffices to prove the proposition
for an irreducible closed subvariety $X$ of $Y(1)^n$ defined over
$\IC$.

What happens if,  after permuting coordinates, we have $X= 
X'\times \{x''\}$,
  no coordinate function is constant on $X'\subset Y(1)^{n'}$, and
 $n'\le n-1$?
Then  no coordinate of $x''$ is a singular moduli, since $X$ would have a
special factor otherwise.
There cannot be two cyclic isogenies of different degree
between a pair of elliptic curves without complex multiplication.
So  $x''\in T_{N''}(x'')$ only if $N''=(1,\ldots,1)$. Since
$T_N(X)=  T_{N'}(X')\times T_{N''}(x'')$ for any $N\in\IN^n$ it is enough
 to verify  the proposition for $X=X'$ and $\dim X\ge 1$. 

The hypothesis and Proposition \ref{prop:geodesic} imply that 
$Z = X\ssm \geodesic{X}$ is a Zariski closed subset of $X$
with $\dim Z \le \dim X -1$.

We aim at a contradiction and suppose
\begin{equation}
\label{eq:inclusion}
  X \subset T_{N_i}(X) \quad\text{for infinitely many}\quad
N_i \in\IN^n. 
\end{equation}
We define
\begin{equation*}
  \Sigma_1 = \bigcup_{i\ge 1} X(\IC)\cap T_{N_i}(Z)(\IC)
\end{equation*}
and for good measure
\begin{equation*}
  \Sigma_2 = \{(x_1,\ldots,x_n)\in X(\IC);\,\, \text{some $x_i$ is
    a singular moduli}\}. 
\end{equation*}

Then $\Sigma_1$ and $\Sigma_2$ are both
 countable unions of Zariski closed
subsets of $X$, each  of 
dimension $\le \dim X - 1$. Indeed, $\dim X\cap T_{N_i}(Z)\le \dim
T_{N_i}(Z) = \dim Z \le \dim X - 1$ proves the claim for $\Sigma_1$.
We recall that
 no coordinate function is constant on $X$ and that
 there are only countably many CM points. So the claim holds for
 $\Sigma_2$ too. 

We find, by  the Baire Category Theorem for example, that
$\Sigma_1\cup \Sigma_2 \subsetneq X$. 
We fix an auxiliary point $x\in X(\IC) \ssm (\Sigma_1\cup \Sigma_2)$. 

After permuting coordinates we may suppose that if $N_{i1}$ is
the first coordinate of $N_i$, then $N_{i1}\rightarrow\infty$. 
By (\ref{eq:inclusion}) there is $x^{(i)} \in X(\IC)$ with $(x,x^{(i)})\in
T_{N_i}(\IC)$. Each $x^{(i)}$ lies  in the full Hecke orbit  $\ho{x}$.
Recall that  the first coordinate of $x$ is not a singular moduli.
By evoking the argument  from above on cyclic isogenies between  elliptic
curves without complex multiplication we find that
 $x^{(1)},x^{(2)},\ldots$ is an infinite sequence. 

Some $x^{(i)}$ must be in $Z(\IC)$  by Theorem \ref{thm:modularmordell}.
But then 
$x\in T_{N_i}(x^{(i)}) \subset T_{N_i}(Z)(\IC)$, contradicting
$x\not\in\Sigma_1$.  
\end{proof}

\section{Approximating Non-Special Subvarieties}
\label{sec:nonspecial}

By continuity we may extend any element of
$\gal{\IQpbar/\IQp}$ uniquely to an automorphism of $\IC_p$.
So we will apply elements of $\gal{\IQpbar/\IQp}$ 
to $\IC_p$ without further comment.

\begin{lemma}[Approximation Lemma]
  Let $\alpha_1,\ldots,\alpha_N\in \mathcal{O}_p$
 and suppose $\delta\in (0,1]$. There exists 
$D\in\IN$ such that if $\sigma\in \gal{\IQpbar/\IQ_p}$, then
\begin{equation*}
  |\sigma^D(\alpha_i)-\alpha_i|_p \le \delta 
\quad\text{for}\quad 1\le i\le N.
\end{equation*}
\end{lemma}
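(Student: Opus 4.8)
The plan is to exploit that each $\alpha_i$ generates a finite extension of $\IQ_p$, so that only finitely many conjugates are involved, and that the residue fields and ramification are uniformly controlled. First I would let $M/\IQ_p$ be the finite extension generated by $\alpha_1,\dots,\alpha_N$ together with its Galois closure $\widetilde M/\IQ_p$; write $G=\gal{\widetilde M/\IQ_p}$. Any $\sigma\in\gal{\IQpbar/\IQp}$ restricts to an element of $G$, and $\sigma^D(\alpha_i)$ depends only on $\sigma|_{\widetilde M}^D$. Since $G$ is finite, there is an integer $D_0$ (e.g. $D_0=|G|$, or $\mathrm{lcm}$ of element orders) with $\sigma^{D_0}|_{\widetilde M}=\mathrm{id}$ for \emph{every} $\sigma$, and then already $\sigma^{D_0}(\alpha_i)=\alpha_i$ exactly. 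That would give the statement with $D=D_0$ and in fact with equality rather than $\le\delta$, so the $\delta$ and the approximation flavour are red herrings at this level of generality — the truly elementary argument gives something stronger.

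However, I suspect the intended proof is the one that does \emph{not} pass to the Galois closure all at once but instead works place-by-place, because that is what generalizes to the situation actually used later (where $\sigma$ lies in a decomposition group and one needs $D$ chosen before knowing $\sigma$). In that spirit: reduce modulo the maximal ideal to the residue field $\IF_{p^f}$ of $M$; the Frobenius-type action of $\sigma$ on residues has order dividing $f$, so after raising to a power divisible by $f$ we may assume $\sigma(\alpha_i)\equiv\alpha_i$ mod $\mathfrak m$ for all $i$. Then I would use a contraction/Krasner argument: the $\IC_p$-ball of radius $<$ (the minimal distance between distinct conjugates of $\alpha_i$) around $\alpha_i$ is permuted by $\sigma$ among finitely many such balls, one for each conjugate, and $\sigma$ fixing the residue means $\sigma$ fixes the ball containing $\alpha_i$; iterating and using that a power of $\sigma$ acts trivially on the finite set of these balls, one lands back at $\alpha_i$'s ball, giving $|\sigma^D(\alpha_i)-\alpha_i|_p<$ that minimal distance, hence $\le\delta$ after possibly enlarging $D$ by incorporating $\ord_p$-bounds (this is where Lemma \ref{lem:order} is the natural tool, controlling how the valuation grows under iteration). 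The cleanest route is: take $D$ to be $|G|$ times a large enough power of $p$ so that $\ord_p(D)$ forces $\ord_p(\sigma^D(\alpha_i)-\alpha_i)$ above $-\log_p\delta$.

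The main obstacle is purely one of bookkeeping: making sure $D$ can be chosen \emph{uniformly} over all $\sigma\in\gal{\IQpbar/\IQp}$ and all $i$ simultaneously, which comes down to the single observation that there are only finitely many $\alpha_i$ and each has only finitely many conjugates, so the relevant quantities (degrees, ramification indices, minimal inter-conjugate distances) range over a finite set and can be bounded once and for all. I would therefore structure the proof as: (1) fix the finite extension $M$ and bound $[M:\IQ_p]$, its ramification index $e$ and residue degree $f$; (2) set $\delta' = \min$ over $i$ and over pairs of distinct conjugates of the relevant $p$-adic distances, and choose $D$ with $|G|\mid D$ and $\ord_p(D)$ large, invoking Lemma \ref{lem:order} to push $|\sigma^D(\alpha_i)-\alpha_i|_p$ below $\min\{\delta,\delta'\}$; (3) conclude. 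I expect (2) to be the only step with any content, and even there the work is just a valuation estimate, not a genuine difficulty.
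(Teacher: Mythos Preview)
Your proposal has a genuine gap: you assume from the outset that each $\alpha_i$ is algebraic over $\IQ_p$ (``let $M/\IQ_p$ be the finite extension generated by $\alpha_1,\dots,\alpha_N$''), but the hypothesis only says $\alpha_i\in\mathcal{O}_p$, the ring of integers of $\IC_p$. Since $\IC_p$ is the completion of $\IQpbar$, the $\alpha_i$ may well be transcendental over $\IQ_p$, and indeed in the paper's application they are coefficients of polynomials defining an arbitrary subvariety of $Y(1)^n$ over $\IC_p$. So your claim that ``the $\delta$ and the approximation flavour are red herrings'' is exactly backwards: the $\delta$ is essential because for transcendental $\alpha_i$ no power of $\sigma$ will fix $\alpha_i$ on the nose, and genuine approximation is all one can hope for.

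The paper's proof fixes this by first using density of $\IQpbar$ in $\IC_p$ to replace each $\alpha_i$ by an algebraic integer $x_i$ in a finite Galois extension $K/\IQ_p$ with $|x_i-\alpha_i|_p\le\delta$. One then observes that $\sigma$ permutes the finite ring $\mathcal{O}_K/\pi^n\mathcal{O}_K$ (with $n$ chosen so that $|\pi^n|_p\le\delta$), so $D=(\#\mathcal{O}_K/\pi^n\mathcal{O}_K)!$ forces $\sigma^D$ to act trivially there, giving $|\sigma^D(x_i)-x_i|_p\le\delta$. A final ultrametric triangle inequality, using that $\sigma$ is an isometry of $\IC_p$, transfers this back to the $\alpha_i$. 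Note in particular that Lemma~\ref{lem:order} plays no role here; that lemma concerns elements of $1+p\IZ_p$ and is used elsewhere for a different purpose. Your instinct that only finitely many conjugates and a finite Galois group are involved is correct once one has passed to the algebraic approximants $x_i$ --- but that passage is the step you are missing.
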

\begin{proof}
 Since $\IQpbar$
lies dense in $\IC_p$ there exists a finite Galois extension $K$ of $\IQ_p$
containing $x_1,\ldots,x_N$
with $|x_i-\alpha_i|_p\le \delta$ for $1\le i\le N$. 
Note that all $x_i$ must be integers since $\delta \le 1$.

Let $\mathcal{O}$ be the ring of integers of $K$ and
 $\pi$ a generator of its maximal ideal.
We fix the smallest integer $n\ge 0$ with
$|\pi^n|_p \le\delta$.
Any element of $\gal{\IQpbar/\IQ_p}$ induces an automorphism of the
finite ring $\mathcal{O} / \pi^n\mathcal{O}$.
If $D =  (\# \mathcal{O} / \pi^n\mathcal{O})!$
then $\sigma^D$ acts trivially
on $\mathcal{O} / \pi^n\mathcal{O}$ for all
$\sigma \in \gal{\IQpbar/\IQ_p}$. 
In other words, $\sigma^D(x_i) \in x_i + \pi^n\mathcal{O}$ or
\begin{equation}
\label{eq:conjbound}
  |\sigma^D(x_i)-x_i|_p \le |\pi^n|_p \le\delta.
\end{equation}

The ultrametric triangle inequality implies
\begin{alignat*}1
  |\sigma^D(\alpha_i)-\alpha_i| &=
|\sigma^D(x_i) - x_i + x_i - \alpha_i 
+ \sigma^D(\alpha_i) - \sigma^D(x_i) |_p \\
&\le \max\{|\sigma^D(x_i) - x_i|_p, |\alpha_i-x_i|_p,
|\sigma^D(\alpha_i-x_i) |_p\} \\
&\le \delta
\end{alignat*}
where we used $|\sigma^D(\alpha_i-x_i)|_p
=|\alpha_i-x_i|_p\le \delta$ and (\ref{eq:conjbound}).
\end{proof}

We extend $|\cdot|_p$  from $\IC_p$
 to the Gauss norm on $\IC_p[X_1,\ldots,X_n]$.

\begin{lemma}
\label{lem:idealmembership}
  Let $f_1,\ldots,f_N\in \IC_p[X_1,\ldots,X_n]$. There exists
 a constant $c=c(f_1,\ldots,f_N) > 0$ such that 
if $f$ lies in the ideal of $\IC_p[X_1,\ldots,X_N]$
 generated by $f_1,\ldots,f_N$ then there
are $a_1,\ldots,a_N\in \IC_p[X_1,\ldots,X_n]$ with
\begin{equation*}
  f = \sum_{i=1}^N a_i f_i
\end{equation*}
and
\begin{equation*}
  \max_{1\le i\le N} |a_i|_p \le c |f|_p. 
\end{equation*}
\end{lemma}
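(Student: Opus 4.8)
The plan is to reduce the statement to a finiteness property of a fixed linear-algebraic presentation of ideal membership, so that the constant $c$ can be extracted uniformly. First I would fix a bound $d$ on the degrees of the generators, $d=\max_i\deg f_i$, and recall an effective Nullstellensatz / ideal-membership bound: there is an integer $d'=d'(n,N,d)$ (for instance from Kollár's work cited in the introduction, or from the much more elementary fact that for a membership statement one may bound the degrees of the cofactors) such that if $f\in(f_1,\dots,f_N)$ has $\deg f\le d'$ then one may write $f=\sum a_if_i$ with $\deg a_i\le d'$. Actually the cleanest route is to work degree by degree: for each integer $m\ge 0$ the set of $f$ in the ideal with $\deg f\le m$ is a $\IC_p$-subspace $V_m$ of the finite-dimensional space $\IC_p[X_1,\dots,X_n]_{\le m}$ of polynomials of degree $\le m$, and it is the image of the $\IC_p$-linear map $\Phi_m:\bigoplus_{i=1}^N \IC_p[X_1,\dots,X_n]_{\le m-d_i}\to \IC_p[X_1,\dots,X_n]_{\le m}$, $(a_i)\mapsto\sum a_if_i$, where $d_i=\deg f_i$.

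Next I would invoke the following elementary fact about surjections of finite-dimensional normed vector spaces over $\IC_p$ (an ``open mapping'' statement, provable by choosing a basis of a complement of $\ker\Phi_m$ and inverting the resulting square matrix, whose entries lie in $\IC_p$ and whose inverse has entries of bounded norm): for the map $\Phi_m$ there is a constant $c_m>0$ with the property that every $f$ in its image can be written $f=\Phi_m((a_i))$ with $\max_i|a_i|_p\le c_m|f|_p$, where all norms are the Gauss norms of the appropriate spaces. The content of the lemma is then to remove the dependence of $c_m$ on $m$. This is where the effective degree bound enters: by the effective ideal-membership bound there is a single $m_0$ such that, after subtracting off an explicit ``leading part'' handled in degree $\le m_0$, one is reduced to the case $\deg f\le m_0$; more precisely, if $f=\sum a_if_i$ is any representation (no norm control), then grouping $a_i$ by homogeneous degree and using that $|f|_p$ dominates the Gauss norm of each graded piece, one sees it suffices to bound the cofactors for $f$ of degree $\le m_0$. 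So one may take $c=c_{m_0}$, a constant depending only on $f_1,\dots,f_N$ (through $c_{m_0}$), as required.

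The main obstacle is precisely the uniformity in $\deg f$: without an a priori degree bound on the cofactors one cannot reduce to a single finite-dimensional linear-algebra problem, and a naive ``take $c=\sup_m c_m$'' need not give a finite constant. The way around it is the effective Nullstellensatz/ideal-membership bound $m_0=m_0(n,N,d)$, which is exactly the ingredient the introduction announces will replace model-theoretic uniformity; once $m_0$ is in hand everything else is the routine open-mapping argument over the complete valued field $\IC_p$. A minor secondary point to check is that the Gauss norm is multiplicative, so that $|a_if_i|_p=|a_i|_p|f_i|_p$ and the estimate interacts cleanly with the identity $f=\sum a_if_i$; this is standard but should be noted when passing between the norm on cofactors and the norm on $f$.
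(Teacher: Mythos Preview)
Your plan correctly identifies the main difficulty as uniformity in $\deg f$, but the proposed fix does not work. The effective ideal-membership bound (Hermann's theorem and its refinements) says that if $f\in(f_1,\dots,f_N)$ with $\deg f=D$ then one may take $\deg a_i\le B(D,n,\max_i\deg f_i)$, and this bound $B$ genuinely grows with $D$; it cannot be replaced by a fixed $m_0$ independent of $f$. Your ``more precisely'' clause does not rescue the argument: the homogeneous pieces $f_{(k)}$ of an element $f\in I$ need not lie in $I$, so one cannot treat them independently, and the alternative of iteratively cancelling the top-degree part (which \emph{does} lie in the ideal generated by the top parts of the $f_i$) may multiply the Gauss norm by a fixed factor $>1$ at each of the $\sim\deg f$ steps, again destroying uniformity. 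So as written there is a genuine gap at exactly the point you flag as the main obstacle.

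The paper's proof takes a shorter route that avoids degree bounds entirely. It regards ideal membership as a single (infinite) linear system $Fx=f$ over $\IC_p$, where the rows and columns of $F$ are both finitely supported and the entries are drawn from the finite set of coefficients of $f_1,\dots,f_N$, and then puts $F$ in reduced row echelon form once and for all. With the free variables set to zero, the pivot variables are linear in the coordinates of $f$ with coefficients determined by this fixed reduction, so the resulting solution has norm bounded by a constant depending only on the $f_i$. No effective Nullstellensatz enters; Koll\'ar's theorem is used elsewhere in the paper (for radical membership in Lemma~\ref{prop:ind1}), but the present lemma is a purely linear statement and is handled as such.
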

\begin{proof}
Stating that $f$ is in the ideal generated by
the $a_i$ is equivalent to stating that a certain inhomogeneous 
linear equation $Fx=f$ is solvable. Here $F$ is a matrix whose
entries are coefficients of the $f_i$ and $f$ is identified
with its coefficients suitably arranged as  a column vector. The
entries of $x$ are coefficients of polynomials $a_i$
such that $f=\sum_{i=1}^n a_i f_i$. 
After transforming $F$ into reduced row echelon form it is evident how
to 
 replace the $a_i$ by new polynomials satisfying the assertion.
\end{proof}

We collect some basic facts  on the $p$-adic distance function in 
the next lemma.

\begin{lemma}
\label{lem:distprop}
Let $Z$ be Zariski closed in $Y(1)^n$
and $Z'$ Zariski closed in $Y(1)^m$.
\begin{enumerate}
\item[(i)]
If $n=m$, then
\begin{equation*}
  \dist{x,Z}{p}\dist{x,Z'}{p}\le \dist{x,Z\cup Z'}{p}
\le \min \{\dist{x,Z}{p},\dist{x,Z'}{p}\}
\end{equation*}
for all $x\in \mathcal{O}_p^n$.
\item[(ii)]
There is a constant
$c=c(Z,Z')\ge 1$ such that if
 $x\in \mathcal{O}_p^n$ and $y\in \mathcal{O}_p^{m}$, then
\begin{equation*}
\dist{(x,y),Z\times Z'}{p}
\le c\max\{ \dist{x,Z}{p},\dist{y,Z'}{p}\}.
\end{equation*}
Moreover, 
\begin{equation*}
  \max\{\dist{x,Z}{p},\dist{y,Z'}{p}\}\le  \dist{(x,y),Z\times Z'}{p}.
\end{equation*}
\end{enumerate}
\end{lemma}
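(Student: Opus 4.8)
The plan is to prove the two parts separately, in each case reducing the statement to manipulations with the ideals defining the closed sets and the Gauss norm. For part (i), write $I$ for the ideal of $Z$ and $I'$ for the ideal of $Z'$ in $\IC_p[X_1,\ldots,X_n]$; the vanishing ideal of $Z\cup Z'$ is $I\cap I'$, and this contains the product ideal $I\cdot I'$. For the left inequality I would take $f\in I\cap\mathcal{O}_p[\underline X]$ and $g\in I'\cap\mathcal{O}_p[\underline X]$, observe $fg\in (I\cap I')\cap\mathcal{O}_p[\underline X]$, and use the multiplicativity of the $p$-adic absolute value to get $|f(x)|_p|g(x)|_p=|(fg)(x)|_p\le\dist{x,Z\cup Z'}{p}$; taking suprema over $f$ and $g$ gives the claim. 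For the right inequality, note $I\cap I'\subset I$ and $I\cap I'\subset I'$, so the supremum defining $\dist{x,Z\cup Z'}{p}$ is over a subset of the $f$'s allowed for $Z$ (and likewise for $Z'$), hence it is bounded by each of $\dist{x,Z}{p}$ and $\dist{x,Z'}{p}$, and so by their minimum.

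For part (ii), let $I\subset\IC_p[X_1,\ldots,X_n]$ be the ideal of $Z$ and $I'\subset\IC_p[Y_1,\ldots,Y_m]$ the ideal of $Z'$; then the ideal of $Z\times Z'$ in $\IC_p[X_1,\ldots,X_n,Y_1,\ldots,Y_m]$ is generated by $I$ together with $I'$. The lower bound is the easy direction: given $f\in I\cap\mathcal{O}_p[\underline X]$, regard it as an element of $\IC_p[\underline X,\underline Y]$ vanishing on $Z\times Z'$ with integral coefficients; then $|f(x)|_p=|f(x,y)|_p\le\dist{(x,y),Z\times Z'}{p}$, and taking the supremum over such $f$ gives $\dist{x,Z}{p}\le\dist{(x,y),Z\times Z'}{p}$, and symmetrically for $Z'$, hence the max is bounded. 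For the upper bound, fix finite generating sets $f_1,\ldots,f_r$ of $I$ lying in $\mathcal{O}_p[\underline X]$ and $g_1,\ldots,g_s$ of $I'$ lying in $\mathcal{O}_p[\underline Y]$; these generate the ideal of $Z\times Z'$. Any element $F$ of that ideal with integral coefficients can be written $F=\sum a_i f_i+\sum b_j g_j$, and by Lemma \ref{lem:idealmembership} applied in $\IC_p[\underline X,\underline Y]$ we may choose the $a_i,b_j$ with Gauss norm $\le c_0|F|_p\le c_0$ for a constant $c_0$ depending only on the generators. Evaluating at $(x,y)$ and using the ultrametric inequality,
\begin{equation*}
|F(x,y)|_p\le\max\Big\{\max_i|a_i(x,y)|_p|f_i(x)|_p,\ \max_j|b_j(x,y)|_p|g_j(y)|_p\Big\}\le c_0\max\{\dist{x,Z}{p},\dist{y,Z'}{p}\},
\end{equation*}
where I used $|a_i(x,y)|_p\le|a_i|_p\le c_0$ for $x,y$ integral, and $|f_i(x)|_p\le\dist{x,Z}{p}$, $|g_j(y)|_p\le\dist{y,Z'}{p}$ since the $f_i,g_j$ are admissible competitors. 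Taking the supremum over all admissible $F$ yields the bound with $c=\max\{c_0,1\}$.

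The one point that needs a little care — and which I expect to be the main technical obstacle — is the step in part (ii) where I pass from an arbitrary integral element $F$ of the ideal of $Z\times Z'$ to a representation $F=\sum a_i f_i+\sum b_j g_j$ with \emph{integral} (or at least norm-bounded) coefficients: a priori Lemma \ref{lem:idealmembership} only controls the Gauss norm of the $a_i,b_j$ in terms of $|F|_p\le 1$, which is exactly enough, but one must make sure the generators $f_i,g_j$ can indeed be taken in $\mathcal{O}_p[\underline X]$ and $\mathcal{O}_p[\underline Y]$ respectively and still generate the full vanishing ideal over $\IC_p$ — this follows since one can scale any $\IC_p$-generators to have Gauss norm $1$ without changing the ideal they generate. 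Everything else is routine use of multiplicativity of $|\cdot|_p$ and the ultrametric triangle inequality.
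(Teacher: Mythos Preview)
Your proof is correct and follows essentially the same approach as the paper: part (i) via the containments $I(Z)I(Z')\subset I(Z\cup Z')=I(Z)\cap I(Z')\subset I(Z),I(Z')$, and part (ii) by regarding integral generators of $I(Z)$ and $I(Z')$ as generators of $I(Z\times Z')$ and invoking Lemma~\ref{lem:idealmembership} to control the coefficients in a representation of an arbitrary integral $F$. The paper handles the point you flag about integral generators with the same ``scale to $\mathcal{O}_p$'' remark.
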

\begin{proof}
We recall that $I(Z\cup Z')=I(Z)\cap I(Z')$ which implies the second
inequality in (i). 
The first one  follows from $I(Z)I(Z')\subset I(Z\cup
Z')$.

We come to part (ii). The second inequality is immediate
as any element of $I(Z)$ or $I(Z')$ also vanishes on $Z\times Z'$ 
when considered as a polynomial in additional variables. 

The first inequality requires some care. 
Let $f_1,\ldots,f_{N}$ and $g_1,\ldots,g_{M}$ be generators
of the ideals $I(Z)$ and $I(Z')$, respectively. 
The first $N$ polynomials have variables $X_1,\ldots,X_n$ and
those of the second $M$ polynomials are $Y_1,\dots,Y_{m}$.
Without loss of generality, we suppose that the $f_i$ and $g_i$ have
coefficients in $\mathcal{O}_p$.

Let $f\in I(Z\times Z')$ have coefficients in $\mathcal{O}_p$.
The ideal $I(Z\times Z')$ is generated
by $f_1,\ldots,f_N$ and $g_1,\ldots,g_{M}$. 
By Lemma \ref{lem:idealmembership} we can find $a_i,b_i\in
\IC_p[X_1,\ldots,X_n,Y_1,\ldots,Y_{m}]$
with $f=\sum_{i=1}^N a_if_i + \sum_{i=1}^{M} b_i g_i$
and $\max_{i}\{|a_i|_p,|b_i|_p\} \le c |f|_p \le c$;
the constant  $c$ depends only on the $f_i$ and $g_i$ but not on
$f$. The ultrametric triangle inequality
implies 
\begin{equation*}
  |f(x,y)|_p \le\max_i \{|a_i|_p
|f_i(x)|_p,|b_i|_p|g_i(y)|_p\}
\le c\max\{|f_i(x)|_p,|g_i(y)|_p\}.
\end{equation*}
Therefore,
$|f(x,y)|_p \le c \max\{\dist{x,Z}{p},\dist{y,Z'}{p}\}$ and
part (ii) follows by taking the supremum over all admissible $f$.
\end{proof}

From now on, 
  let $X $ be a subvariety of $Y(1)^n$.

We now prove that an ordinary CM point that lies sufficiently close to
$X$ must lie close to one of finitely many special subvarieties
contained in $X$. 
We do this in 3 steps given by the following 3, increasingly refined, statements. 
Handling ordinary CM points that are close to a special subvariety
requires Serre-Tate theory and
will be postponed to the next section.

\begin{lemma}[Induction step - first version]
\label{prop:ind1}
 We suppose that $X$ does not have 
 a special factor. 
There exists a Zariski closed subset
$Z\subsetneq X$ 
with the following properties.
Let $\epsilon > 0$. There is a constant
$\delta=\delta_1(\epsilon,X) >0$ such that 
if $x\in Y(1)^n(\IC_p)$ is an ordinary CM point
\begin{equation*}
\text{with}\quad \dist{x,X}{p}\le \delta\quad\text{then}\quad
   \dist{x,Z}{p}\le \epsilon.
 \end{equation*}
\end{lemma}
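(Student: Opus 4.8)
The plan is to produce $Z$ from Proposition \ref{prop:TNX} applied to $X$, together with the Approximation Lemma and the good Galois element of Proposition \ref{prop:goodgalois}. Since $X$ has no special factor, Proposition \ref{prop:TNX} gives $N_0$ such that $X\not\subset T_N(X)$ whenever $|N|\ge N_0$. I would like $Z$ to be the union, over all $N\in\IN^n$ with $|N|\le N_0$ that involve only powers of $p$ in the coordinates we care about, of the Zariski closures $\overline{X\cap T_N(X)}$ — or more robustly, over all $N$ with $|N|\le 3D_0 k_0$ for the $D_0$ produced below — but a cleaner choice is to let $Z$ be the union of $X\cap T_N(X)$ over the finitely many $N$ with $N_i=p^{k_i}$, $1\le k_i\le 3D k_0$; since this is a finite union of proper closed subsets of the \emph{irreducible} $X$ (each proper because $X\not\subset T_N(X)$ for the large ones, and for small ones one checks $X\neq T_N(X)$ or simply absorbs them), we get $Z\subsetneq X$. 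The subtlety that $D$ itself depends on $\epsilon$ means I should instead fix the level parameter $k_0$ first (say $k_0=1$, or large enough that $k_0\ge 2\ord_p(2D)$ will hold), bound the relevant Hecke levels uniformly, and only then choose $D$; I will return to this point as the main obstacle.

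The core argument: suppose $x=(x_1,\dots,x_n)$ is an ordinary CM point with $\dist{x,X}{p}\le\delta$, where $\delta$ is to be chosen. Let $I(X)=(f_1,\dots,f_N)$ with $f_j\in\mathcal{O}_p[X_1,\dots,X_n]$. Each $x_i$ is a singular modulus attached to an order in an imaginary quadratic field $K_i$; because $x$ is \emph{ordinary}, $p$ splits in each $K_i$ (this is exactly where ordinariness enters, via the standard fact that ordinary reduction at $p$ forces $p$ to split in the CM field). So Proposition \ref{prop:goodgalois} applies: for suitable $k_0$ and $D$ we get $\sigma\in\gal{L/F}\subset\gal{\IQpbar/\IQp}$ (after extending to $\IC_p$) with $\sigma\in\decomp{v}$, hence $\sigma$ fixes the $p$-adic absolute value, and with $\sigma^D(x)\in T_{(p^{k_1},\dots,p^{k_n})}(x)$ for some $1\le\max k_i\le 3Dk_0$. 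Now apply the Approximation Lemma to the coordinates $\alpha=x$ and to a finite set of auxiliary points so that $\sigma^D$ moves every $x_i$ by less than any prescribed $\delta'$; since $\sigma$ preserves $|\cdot|_p$, it also nearly fixes $X$ in the sense that $\dist{\sigma^D(x),X}{p}=\dist{x,\sigma^D(X)}{p}$ is controlled — more precisely, $|f_j(\sigma^D x)-\sigma^D f_j(x)|_p=|f_j(\sigma^D x)-f_j(x)|_p$ is small because $f_j$ has $\mathcal{O}_p$-coefficients and the $x_i$ barely moved, while $|\sigma^D f_j(x)|_p=|f_j(x)|_p\le\delta$; so $\dist{\sigma^D(x),X}{p}\le\max\{\delta,\delta'\}$.

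Combining: $y:=\sigma^D(x)$ lies in $T_N(x)$ with $N=(p^{k_i})$, so $x\in T_N(y)$ (the correspondence is symmetric), and $y$ is $p$-adically $\max\{\delta,\delta'\}$-close to $X$. I want to conclude $x$ is close to $X\cap T_N(X)\subseteq Z$. Since $y$ is close to $X$ and the modular polynomials $\Phi_{p^{k_i}}$ have integer coefficients and bounded degree (as $k_i\le 3Dk_0$ is bounded independently of the point — \emph{this boundedness is the crux}), the relation $\Phi_{p^{k_i}}(x_i,y_i)=0$ together with $y$ near $X$ forces $x$ to be near $T_N(X)$; and $\dist{x,X}{p}\le\delta$ directly. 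By Lemma \ref{lem:distprop}(i), $\dist{x,X\cap T_N(X)}{p}\ge\dist{x,X}{p}\cdot\dist{x,T_N(X)}{p}$ goes the wrong way, so instead I would argue directly with polynomials: any $g\in I(X\cap T_N(X))\cap\mathcal{O}_p[\underline X]$ lies, by Lemma \ref{lem:idealmembership}, in the ideal generated by $I(X)$ and the polynomials $\Phi_{p^{k_i}}(X_i,Y_i)$ after a resultant/elimination step to remove the $Y$'s — here using that only finitely many levels $N$ occur, so only finitely many ideals and one uniform constant $c$ appear — whence $|g(x)|_p$ is bounded by $c\cdot\max\{\dist{x,X}{p},\text{(contribution from }y\text{ near }X)\}\le c\max\{\delta,\delta'\}$. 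Taking the supremum over $g$ gives $\dist{x,Z}{p}\le c\max\{\delta,\delta'\}$; choosing first $D$ (hence the level bound, hence the finite list of $N$ and the constant $c$), then $\delta'$ via the Approximation Lemma, then $\delta$ small enough makes this $\le\epsilon$.

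The main obstacle, as flagged, is the circular-looking dependence: the Galois element's effectiveness (how close $\sigma^D$ fixes things) is governed by $D$ from the Approximation Lemma, which depends on $\delta$, while the Hecke level $\max k_i\le 3Dk_0$ — and hence the finite set $Z$ and the uniformity constant $c$ — must be fixed \emph{before} $\delta$. The resolution is that $k_0$ can be chosen as a fixed constant (subject only to $k_0\ge 2\ord_p(2D)$, which is automatic once $D$ is a fixed power-of-$p$ multiple, or one iterates: pick $D$ a priori from the Approximation Lemma applied with a harmless $\delta_0=1$ only to get the \emph{level range}, then re-run with the genuine $\delta$). So $Z$ should be defined using the level bound coming from $D_0$ attached to $\delta_0=1$; this $Z$ works for all smaller $\delta$ too, since shrinking $\delta$ only shrinks $D$'s required value in the sense that the same $D_0$ still satisfies the Approximation Lemma's conclusion (a valid $D$ for $\delta_0=1$ need not be valid for smaller $\delta$ — so one genuinely needs a \emph{larger} $D$ later, but then $\sigma^D$ is $\sigma^{D_0}$ composed with more powers, and one checks the level of the resulting Hecke relation is still governed by $3D_0 k_0$ if one keeps $k_0$ fixed, because Proposition \ref{prop:goodgalois} is invoked with that fixed $k_0$ and the \emph{current} $D$; hence $Z$ must after all be indexed by the largest $D$ that will ever be used, i.e. by the final $\delta$). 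The honest fix is therefore to run the logic in the order: (1) fix $k_0$; (2) from $\epsilon$ obtain $\delta'$; (3) from $\delta'$ (via Approximation Lemma) obtain $D$; (4) this $D$ determines the finite family of levels $N$ and thereby $Z$ \emph{and} the constant $c$; (5) shrink $\delta\le\delta(\epsilon,X)$ so $c\max\{\delta,\delta'\}\le\epsilon$ — and simply note that $Z$ is allowed to depend on $\epsilon$ through this chain, OR, to get $Z$ independent of $\epsilon$ as the statement requires, observe that $Z\subsetneq X$ holds \emph{for every} level bound by irreducibility of $X$ and Proposition \ref{prop:TNX}, so one may take $Z=\bigcup_{N}\big(X\cap T_N(X)\big)$ over \emph{all} $N\in\IN^n$ whose coordinates are powers of $p$ — this is a countable union of proper closed subsets, which is still a proper closed subset precisely because only finitely many $N$ give $X\cap T_N(X)=X$ impossible and the rest are proper and, by a dimension/Noetherian argument, their union is contained in finitely many of them. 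Making that last reduction rigorous (the union of infinitely many proper closed subvarieties of $X$ need not be closed a priori) is the one place requiring real care, and I would handle it by noting all these $T_N(X)$ for $|N|$ large avoid a fixed ample-ish generic point as in the proof of Proposition \ref{prop:TNX}, so the union is contained in a fixed proper closed $Z$.
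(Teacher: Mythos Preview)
Your overall strategy matches the paper's --- combine Proposition~\ref{prop:TNX}, the Galois element of Proposition~\ref{prop:goodgalois}, and the Approximation Lemma to show $\sigma^D(x)$ is both close to $X$ and Hecke-related to $x$, forcing $x$ near $X\cap T_N(X)$ --- but two genuine gaps remain in the execution.

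\textbf{The Approximation Lemma is applied to the wrong data.} You feed the coordinates $x_i$ of the CM point into the Approximation Lemma to force $|\sigma^D(x_i)-x_i|_p\le\delta'$. But the $D$ produced by that lemma depends on the specific elements fed in; applied to the \emph{variable} point $x$, this makes $D$ --- hence the level bound $3Dk_0$, hence $Z$, hence your constant $c$ --- depend on $x$, destroying all uniformity. The paper instead applies the Approximation Lemma to the \emph{coefficients} of fixed generators $f_1,\ldots,f_N$ of $I(X)$, obtaining $|\sigma^D(f_i)-f_i|_p\le\delta$ for every $\sigma\in\gal{\IQpbar/\IQ_p}$ with $D$ depending only on $X$ and $\delta$. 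Since such $\sigma$ preserve $|\cdot|_p$, one then gets
\[
|f_i(\sigma^D x)|_p=|(\sigma^{-D}f_i)(x)|_p\le\max\{|\sigma^{-D}f_i-f_i|_p,\,|f_i(x)|_p\}\le\delta,
\]
so $\sigma^D(x)$ is close to $X$ without ever needing $\sigma^D(x)$ close to $x$. (Relatedly, your equation $|f_j(\sigma^D x)-\sigma^D f_j(x)|_p=|f_j(\sigma^D x)-f_j(x)|_p$ fails unless the $f_j$ have coefficients in $\IQ_p$.)

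\textbf{The passage to closeness to $X\cap T_N(X)$ needs an effective Nullstellensatz.} A polynomial $g$ vanishing on $X\cap T_N(X)$ does \emph{not} in general lie in the ideal generated by $f_i(X),f_i(Y),\Phi_{p^{k_i}}(X_i,Y_i)$; only some power $g^s$ does, and you need $s$ bounded in terms of the degrees alone. Lemma~\ref{lem:idealmembership} presupposes ideal membership and cannot supply this; your ``resultant/elimination step'' does not address it either. The paper uses Koll\'ar's effective Nullstellensatz to get a uniform $s$, then Lemma~\ref{lem:idealmembership} to write $g^s=\sum a_i f_i(X)+\sum b_i f_i(Y)+\sum c_i\Phi_{p^{k_i}}(X_i,Y_i)$ with $\max\{|a_i|_p,|b_i|_p,|c_i|_p\}\le c$, and evaluates at $(x,\sigma^D(x))$: the $\Phi$-terms vanish and the rest give $|g(x)|_p^s\le c\delta$. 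Finally $\delta$ is chosen to make $(c^{1/s}+1)\delta^{1/s}\le\epsilon$. Your observation that the chain $\delta\to D\to k_0\to Z$ makes $Z$ formally depend on $\epsilon$ is correct and applies equally to the paper's construction; the paper simply builds $Z$ from the $D,k_0$ attached to the chosen $\delta$ and proceeds.
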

\begin{proof}
We fix polynomials 
 $f_1,\ldots,f_N \in \IC_p[X_1,\ldots,X_n]$ 
that generate $I(X)$.
Without
loss of generality, we may suppose that the $f_i$ have coefficients in
$\mathcal{O}_p$. 
Later on we will use Koll\'ar's Sharp Effective Nullstellensatz
\cite{Kollar}.
Quadratic polynomials are not allowed in this result but this can be
amended by replacing $f_i$ by $f_i^2$ if necessary. 
Although the $f_i$ may no longer generate $I(X)$, their set of common
zeros is still $X$. This is the only property we will need.

The constant $\delta=\delta_1(\epsilon,X) > 0$ will be determined in 
due course.
 It will satisfy $\delta\le 1$. 

We apply the Approximation Lemma to $\delta$ and
all coefficients of all $f_i$ to obtain $D$ with
\begin{equation}
\label{eq:approxfi}
  |\sigma^D(f_i)-f_i|_p\le \delta
\end{equation}
for all $i$ and all $\sigma\in \gal{\IQpbar/\IQ_p}$.

We continue by choosing the least integer $k_0$ with $k_0\ge \max\{1,2\ord_p(2D)\}$
and
$p^{k_0} \ge N_0$  where $N_0$ is as in Proposition \ref{prop:TNX}.
So  $k_0$ satisfies the
hypothesis of Proposition \ref{prop:goodgalois}.
Moreover, if $N=(p^{k_1},\ldots,p^{k_n})$ with
$(k_1,\ldots,k_n)$ is as in
Proposition \ref{prop:goodgalois}(i),
 then 
$X\not\subset T_N(X)$ by (\ref{eq:TNX}). 
We define the finite union
\begin{equation*}
  Z =  \bigcup_{k_0\le k=\max\{k_1,\ldots,k_n\} \le 3Dk_0 }X\cap 
T_{(p_1^{k_1},\ldots,p_n^{k_n})}(X).
\end{equation*}
It is Zariski closed and satisfies $Z\subsetneq
X$.

Now suppose $x=(x_1,\ldots,x_n)\in \mathcal{O}_p^n$ is as in the assertion.

Proposition \ref{prop:goodgalois} gives us $\sigma$ and
$(k_1,\ldots,k_n)$ where $v$ is induced by restricting $|\cdot|_p$ to
$F$, we now take both $L$ and $F$ of this proposition as subfields of
$\IQpbar$.
 We extend $\sigma$ to an element of $\gal{\IQpbar/\IQp}$.

We introduce new independent variables $Y_1,\ldots,Y_n$
and consider the the collection of polynomials
\begin{equation}
\label{eq:polysystem}
  f_1(X_1,\ldots,X_n),\ldots, f_N(X_1,\ldots,X_n), \\
  f_1(Y_1,\ldots,Y_n),\ldots, 
f_N(Y_1,\ldots,Y_n),\\
  \Phi_{p^{k_1}}(X_1,Y_1),\ldots, \Phi_{p^{k_n}}(X_n,Y_n)
\end{equation}
involving the modular polynomials. 
We note that the modular polynomials cannot have degree $2$.

By Koll\'ar's Corollary 1.7 \cite{Kollar}  
there exists $s$,
bounded solely in terms of the $\deg f_i$ and $p^{k_i}$,
with the following property. 
The $s$-th power of any polynomial vanishing on the 
set of common zeros of (\ref{eq:polysystem}) in $\IC_p^{2n}$
is in the ideal generated by said polynomials.

We fix $f\in \mathcal{O}_p[X_1,\ldots,X_n]$ that vanishes on $Z$ 
with 
\begin{equation}
\label{eq:ex}
  |f(x)|_p  \ge \dist{x,Z}{p} - \delta^{1/s}.
\end{equation}
Let us keep in mind that $f$ depends on $x$. 

If $(x'_1,\ldots,x'_n,y'_1,\ldots, y'_n)$ is any common zero of (\ref{eq:polysystem}), then 
$(x'_1,\ldots,x'_n)\in Z(\IC_p)$. 
So  $f^s$ lies in the ideal generated by the polynomials
(\ref{eq:polysystem}).
We invoke Lemma \ref{lem:idealmembership} to 
find polynomials $a_{i},b_{i},c_{i} \in \IC_p[X_1,\ldots,Y_n]$ 
with
\begin{equation*}
  f^s = \left(\sum_{i} a_{i}f_i(X_1,\ldots,X_N)+b_{i}
  f_i(Y_1,\ldots,Y_N)\right) +
\sum_{i} c_{i} \Phi_{p^{k_i}}(X_i,Y_i)
\end{equation*}
and $\max_i\{|a_{i}|_p,|b_{i}|_p, |c_{i}|_p\} \le c |f^s|_p\le
c$; here $c$ depends on  the $f_i$ and the $p^{k_i}$ but not on $f$ or $x$.

Substituting $(X_1,\ldots,X_n)$ by  $x$ and 
$(Y_1,\ldots,Y_n)$ by $\sigma^D(x)$ makes the terms involving the modular
polynomials vanish. 
We now show that the  remaining terms are small $p$-adically. 
To start off, we use the ultrametric triangle inequality to show
\begin{equation*}
|f(x)|_p^s  
\le \max_{i} \{|a_{i}(x,\sigma^D(x)) f_i(x)|_p,
|b_{i}(x,\sigma^D(x))f_i(\sigma^D(x))|_p\}.
\end{equation*}
Now $|a_{i}(x,\sigma^D(x))|_p\le |a_{i}|_p$ 
since $x$ and $\sigma(x)$ have integral coordinates. 
A similar bound holds for the $b_{i}$.
We also remark that 
$|f_i(x)|_p\le \dist{x,X}{p}\le \delta$ by definition of the
distance. So
\begin{equation*}
  |f(x)|_p^s\le \max_i\{|a_{i}|_p,|b_{i}|_p\}  
\max_i\{\delta,|f_i(\sigma^D(x))|_p\}.
\end{equation*}

We may use (\ref{eq:approxfi}) to get rid of the remaining 
$|f_i(\sigma^D(x))|_p$.
Indeed,  $|f_i(\sigma^D(x))|_p = |(\sigma^{-D}f_i)(x)|_p$, so 
\begin{equation*}
  |f_i(\sigma^D(x))|_p = 
  |(\sigma^{-D}f_i)(x)-f_i(x)+f_i(x)|_p
\le \max\{|(\sigma^{-D}f_i-f_i)(x)|_p,|f_i(x)|_p\}
\end{equation*}
and thus
\begin{equation*}
  |f_i(\sigma^D(x))|_p \le
\max\{|(\sigma^{-D}f_i)-f_i|_p,|f_i(x)|_p\}
\le \delta.
\end{equation*}

We find
\begin{equation*}
  |f(x)|_p \le \max_{i}\{|a_{i}|_p,|b_{i}|_p\}^{1/s}
  \delta^{1/s} \le c^{1/s} \delta^{1/s}
\end{equation*}
and (\ref{eq:ex}) yields
  $\dist{x,Z}{p} 
\le (c^{1/s}+1)\delta^{1/s}$.
The proposition follows for any $\delta\in(0,1]$ with
$(c^{1/s}+1)\delta^{1/s} \le \epsilon$.
\end{proof}

We now generalize our statement 
 from subvarieties without special factors to subvarieties
that are not special. 

\begin{lemma}[Induction step - second version]
\label{prop:ind2}
 We suppose that $X$ is not a special subvariety
of $Y(1)^n$.  
There exists a Zariski closed subset
$Z\subsetneq X$  
with the following property.
Let $\epsilon > 0$. There is a constant $\delta = \delta_2(\epsilon,X) > 0$
such that if $x\in Y(1)^n(\IC_p)$ is an ordinary
CM point
\begin{equation*}
\text{with}\quad \dist{x,X}{p}\le \delta \quad\text{then}\quad
   \dist{x,Z}{p}\le \epsilon.
 \end{equation*}
\end{lemma}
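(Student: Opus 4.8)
The plan is to reduce the case ``$X$ not special'' to the case ``$X$ has no special factor'', which is exactly Lemma~\ref{prop:ind1}. So suppose $X\subset Y(1)^n$ is not a special subvariety. If $X$ already has no special factor we are done by Lemma~\ref{prop:ind1}, so assume $X$ does have a special factor. After permuting coordinates we may write $X=S\times W$ with $S\subset Y(1)^{n-n'}$ a special subvariety, $n-n'\ge 1$, and $W\subset Y(1)^{n'}$ a subvariety. Since $X$ itself is not special, $W$ cannot be special (otherwise $S\times W$ would be special, as a product of special subvarieties is special); in particular $n'\ge 1$ and $\dim W\ge 0$. We should choose the decomposition with $S$ of maximal dimension among all special factors, or, more simply, argue by induction on $n$: the factor $W$ lives in a power $Y(1)^{n'}$ with $n'<n$, so the full statement of Lemma~\ref{prop:ind2} applies to $W$ by the inductive hypothesis. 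The base case $n=1$ is immediate, since then $X$ is either all of $Y(1)$, which is special, or a point; a point $\{c\}$ with $c$ not a singular modulus is not special, and one simply takes $Z=\emptyset$, so the hypothesis $\dist{x,X}{p}\le\delta$ is vacuous for small enough $\delta$ whenever $x$ is an ordinary CM point distinct from $c$ — although here one must be slightly careful, and the cleaner base case is to note that if $\dim X=0$ and $X$ is not special, an ordinary CM point can still be forced close to $X$, so we instead peel off one non-special coordinate directly.

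More carefully, the induction I would run is on $n$, and in the inductive step I would \emph{not} assume a product decomposition a priori but instead distinguish two cases. Case 1: $X$ has no special factor. Then Lemma~\ref{prop:ind1} provides $Z\subsetneq X$ with the desired approximation property, and we are done. Case 2: $X$ has a special factor, so after a coordinate permutation $\rho$ we have $X = S\times W$ with $S\subset Y(1)^{m}$ special, $m\ge 1$, $W\subset Y(1)^{n-m}$, and we may take $W$ to have no special factor (by absorbing any further special factor of $W$ into $S$; this keeps $S$ special since the product of special subvarieties is special). Since $X$ is not special and $S$ is special, $W$ is not special either; note $n-m\ge 1$. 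Now apply the inductive hypothesis (or, if $W$ has no special factor, directly Lemma~\ref{prop:ind1}) to $W$: there is a Zariski closed $Z_W\subsetneq W$ and, for each $\epsilon>0$, a $\delta_W(\epsilon,W)>0$ such that an ordinary CM point $y\in Y(1)^{n-m}(\IC_p)$ with $\dist{y,W}{p}\le\delta_W$ satisfies $\dist{y,Z_W}{p}\le\epsilon$.

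It remains to glue this along the product, and here is where Lemma~\ref{lem:distprop} does the work. Set $Z = \rho(S\times Z_W)$, a Zariski closed subset of $X$ that is proper since $Z_W\subsetneq W$. Given an ordinary CM point $x\in Y(1)^n(\IC_p)$, write $\rho^{-1}(x) = (x_S,x_W)$ with $x_S$ an ordinary CM point of $Y(1)^m$ and $x_W$ an ordinary CM point of $Y(1)^{n-m}$ (the coordinates of an ordinary CM point are ordinary CM coordinates, and a permutation of them is again such). By the second inequality of Lemma~\ref{lem:distprop}(ii), $\dist{x_W,W}{p}\le\dist{(x_S,x_W),S\times W}{p} = \dist{\rho^{-1}(x),\rho^{-1}(X)}{p}=\dist{x,X}{p}$; so if $\dist{x,X}{p}\le\delta_W(\epsilon/c_0,W)$ with $c_0=c(S,Z_W)\ge 1$ the constant from Lemma~\ref{lem:distprop}(ii), then $\dist{x_W,Z_W}{p}\le\epsilon/c_0$. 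Also $\dist{x_S,S}{p}\le\dist{x,X}{p}\le\epsilon/c_0$ by the same inequality. The first inequality of Lemma~\ref{lem:distprop}(ii) then gives $\dist{x,Z}{p} = \dist{(x_S,x_W),S\times Z_W}{p}\le c_0\max\{\dist{x_S,S}{p},\dist{x_W,Z_W}{p}\}\le\epsilon$. So $\delta_2(\epsilon,X) := \delta_W(\epsilon/c_0,W)$ works (shrink it to be $\le 1$). The main obstacle, such as it is, is organizational rather than technical: one must make sure the reduction actually terminates, i.e. that the factor $W$ genuinely lives in a strictly smaller power of $Y(1)$ and that absorbing sub-special-factors into $S$ keeps $S$ special and keeps $W$ non-special — both follow from the fact that products of special subvarieties are special, which is immediate from the definition of special subvariety given in Section~\ref{sec:hecke}. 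All the analytic content is already packaged in Lemma~\ref{prop:ind1} and Lemma~\ref{lem:distprop}.
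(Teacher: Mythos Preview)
Your argument is essentially the paper's own: split off a maximal special factor so that $X=S\times W$ with $W$ having no special factor, apply Lemma~\ref{prop:ind1} to $W$ to obtain $Z_W\subsetneq W$, set $Z=S\times Z_W$, and glue via Lemma~\ref{lem:distprop}(ii). One small slip: you assert $\dist{x_S,S}{p}\le\dist{x,X}{p}\le\epsilon/c_0$, but you only assumed $\dist{x,X}{p}\le\delta_W(\epsilon/c_0,W)$, and nothing guarantees $\delta_W(\epsilon/c_0,W)\le\epsilon/c_0$; the fix is to take $\delta_2(\epsilon,X)=\min\{\delta_W(\epsilon/c_0,W),\,\epsilon/c_0\}$, exactly as the paper does.
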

\begin{proof}
If $X$ has no special factor, then the proposition
follows from Lemma \ref{prop:ind1}.

After permuting coordinates we may suppose that $X=S\times W$ with $S\subset Y(1)^{n'}$ special
and $W\subset Y(1)^{n''}$ a subvariety that has
no special factors. We remark that $n'\ge 1$ because $X$ has a special
factor and
 $n''\ge 1$ because $X$ is  not special.  
The first version of the induction step applied to $W$
yields a Zariski closed set
 $Z'\subsetneq W$. We remark that $Z'$ is independent of $\epsilon$.

First we apply the initial bound  of
   Lemma \ref{lem:distprop}(ii) 
to find $c\ge 1$  with
\begin{equation}
\label{eq:prod}
  \dist{(x',x''),S\times Z'}{p} \le c\max\{\dist{x',S}{p},\dist{x'',Z'}{p}\}
\end{equation}
for all $x'\in \mathcal{O}_p^{n'}$ and $x''\in \mathcal{O}_p^{n''}$.

We make the choice
 $\delta=\delta_2(\epsilon,X) = \min \{
 \delta_1(\epsilon/c,W),
 \epsilon/c\}$
and proceed to show that the Zariski
 closed set $Z=S\times Z'$ satisfies the assertion. 

Say $x=(x',x'')$ is as in the hypothesis.
If $\dist{x,X}{p}\le \delta$, then 
$\dist{x'',W}{p}\le \delta$
and $\dist{x',S}{p}\le \delta$
 by the second bound
of Lemma \ref{lem:distprop}(ii).
The previous
lemma provides
$\dist{x'',Z'}{p}\le \epsilon / c$. Recalling (\ref{eq:prod})
yields 
\begin{equation*}
  \dist{x,S\times Z'}{p} \le 
c \max\left\{\delta, \frac{\epsilon}{c}\right\}
\le  \epsilon.
\qedhere
\end{equation*}
\end{proof}

\begin{proposition}[Induction step - final version]
\label{prop:ind3}
  There exist $N\ge 0$ and a finite number of special subvarieties
$S_1,\ldots,S_N$ of $Y(1)^n$ contained in $X$ with the following
  property. Let $\epsilon > 0$. There is  a constant
$\delta=\delta_3(X,\epsilon)>0$ such that 
if $x\in Y(1)^n(\IC_p)$ is an  ordinary CM point
\begin{equation*}
\text{with}\quad \dist{x,X}{p}\le \delta \quad\text{then}\quad
   \dist{x,S_i}{p}\le \epsilon\quad\text{for some}\quad 1\le i\le N.
 \end{equation*}
\end{proposition}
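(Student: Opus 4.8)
The plan is to iterate Lemma \ref{prop:ind2} along a descending chain of closed subsets until we land on special subvarieties, keeping track at each stage of how the tolerance degrades.

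\medskip

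First I would set up the induction on $\dim X$ (with a trivial base case $\dim X = 0$, where $X$ is a single point and is either special — in which case we take $S_1 = X$ — or not, in which case a single coefficient of a defining polynomial shows $\dist{x,X}{p}$ is bounded below on the finitely many CM points at bounded... no: for a non-special point $X=\{c\}$ with some $c_i$ not a singular modulus, the statement is vacuous for $\epsilon$ small since no ordinary CM point lies close, but cleanly one argues: if $X$ is not special then apply the step below, and if it is special then $N=1$, $S_1=X$, $\delta=\epsilon$ works). The inductive step splits into two cases. If $X$ is itself a special subvariety of $Y(1)^n$, the conclusion is immediate with $N=1$, $S_1 = X$, and $\delta = \epsilon$. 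Otherwise, $X$ is not special, so Lemma \ref{prop:ind2} furnishes a Zariski closed $Z \subsetneq X$ and, for each $\epsilon>0$, a $\delta_2(\epsilon,X)>0$ such that an ordinary CM point $x$ with $\dist{x,X}{p} \le \delta_2(\epsilon,X)$ satisfies $\dist{x,Z}{p}\le\epsilon$.

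\medskip

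Next, since $Z\subsetneq X$ we have $\dim Z < \dim X$, so I may decompose $Z = X_1 \cup \cdots \cup X_r$ into its irreducible components, each of dimension strictly less than $\dim X$, and apply the inductive hypothesis to each $X_j$. This yields, for each $j$, finitely many special subvarieties $S_{j,1},\ldots,S_{j,N_j} \subset X_j \subset X$ and a function $\epsilon \mapsto \delta_3(X_j,\epsilon)$ with the stated property. Now fix $\epsilon>0$. Set $\delta' = \min_j \delta_3(X_j,\epsilon) > 0$, then put $\delta = \delta_3(X,\epsilon) := \delta_2(\delta', X) > 0$. Suppose $x$ is an ordinary CM point with $\dist{x,X}{p}\le\delta$. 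By Lemma \ref{prop:ind2}, $\dist{x,Z}{p}\le \delta'$. By Lemma \ref{lem:distprop}(i), $\dist{x,Z}{p} \le \dist{x,X_j}{p}$ for each $j$ — wait, that inequality goes the wrong way; rather, $\dist{x,Z}{p} = \dist{x,X_1\cup\cdots\cup X_r}{p} \ge \prod_j \dist{x,X_j}{p}$, which is also not what I want. The correct route: iterate the union bound. From $\dist{x,X_1\cup X_2}{p}\le \min\{\dist{x,X_1}{p},\dist{x,X_2}{p}\}$ read backwards — no. I need $\dist{x,Z}{p}\le\delta'$ to force $\dist{x,X_j}{p}$ small for \emph{some} $j$; this is exactly the lower bound in Lemma \ref{lem:distprop}(i): $\prod_j \dist{x,X_j}{p} \le \dist{x,Z}{p} \le \delta'$, so some factor is at most $(\delta')^{1/r} \le \delta'{}^{1/r}$. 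Hence I should instead have chosen $\delta' $ so that $\delta'{}^{1/r}\le \min_j\delta_3(X_j,\epsilon)$, i.e. $\delta' = \big(\min_j \delta_3(X_j,\epsilon)\big)^{r}$. Then for that $j$, $\dist{x,X_j}{p}\le \delta_3(X_j,\epsilon)$, so $\dist{x,S_{j,i}}{p}\le\epsilon$ for some $i$, and $S_{j,i}\subset X_j\subset X$ is one of the finitely many special subvarieties $\{S_{j,i}\}_{j,i}$ collected in advance (these depend only on $X$, not on $\epsilon$), completing the step.

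\medskip

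\textbf{The main obstacle} is purely bookkeeping: making sure the finite list of special subvarieties $S_1,\ldots,S_N$ is chosen once and for all \emph{before} $\epsilon$ is specified — which works because Lemma \ref{prop:ind2} produces $Z$ independently of $\epsilon$, and the inductive hypothesis likewise produces its special subvarieties independently of $\epsilon$ — while the tolerance $\delta_3(X,\epsilon)$ is allowed to depend on $\epsilon$ and is built by composing the $\delta$'s from the lower-dimensional strata with $\delta_2$, inserting the appropriate root to absorb the degradation coming from the product bound in Lemma \ref{lem:distprop}(i) across the finitely many irreducible components of $Z$. No new geometric input is needed beyond Lemma \ref{prop:ind2}; everything else is the finiteness of the component decomposition and the elementary distance inequalities of Lemma \ref{lem:distprop}.
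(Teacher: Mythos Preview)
Your argument is correct and follows essentially the same route as the paper's proof: induction on $\dim X$, the special case handled trivially with $N=1$, and the non-special case by invoking Lemma~\ref{prop:ind2} to produce $Z\subsetneq X$, decomposing $Z$ into its irreducible components, applying the inductive hypothesis to each, and using the product bound in Lemma~\ref{lem:distprop}(i) to pass from $Z$ to some component (absorbing the $r$th-root loss into the choice of $\delta'$). The paper organizes the constants slightly differently --- it sets $\delta=\min_i \delta_2\bigl(\delta_3(\epsilon,Z_i)^{M},X\bigr)$ rather than your $\delta_2\bigl((\min_j\delta_3(X_j,\epsilon))^{r},X\bigr)$ --- but this is cosmetic, and your observation that the special subvarieties are fixed independently of $\epsilon$ because $Z$ is, is exactly the bookkeeping point the paper relies on.
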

\begin{proof}
Without loss of generality  $X$ is not special. 
We prove the proposition by induction on $\dim X$. 

Suppose $\dim X=0$. Since $X$ is not a CM point,
 the previous lemma
implies that $\dist{x,X}{p}$ is uniformly bounded from below for any
$x$ as in the assertion. This proposition follows  with $N=0$ for
$\delta_3(X,\epsilon) > 0$ sufficiently small.

Now we assume $\dim X\ge 1$. 

Let $\delta_2(\epsilon,X)$ be the constant from the previous lemma
and let $Z_1,\ldots,Z_{M}$ be the irreducible
components of the Zariski closed subset of $X$ it provides.
The $Z_i$ are independent of $\epsilon$.
 By induction we get special subvarieties $S_1,\ldots,S_N$
contained in the $Z_i$ and constants $\delta_3(Z_i,\epsilon)$. 

We now prove that $\delta=\delta_3(X,\epsilon) = \min_i
\{\delta_2(\delta_3(\epsilon,Z_i)^{M},X)\}$ is sufficient. 

Say $x$ is as in the assertion and $\dist{x,X}{p}\le \delta$. Then 
$\dist{x,Z}{p}\le\delta_3(\epsilon,Z_i)^M$ 
for all $i$ by Lemma \ref{prop:ind2}. 
 Lemma \ref{lem:distprop}(i) implies that
some component of $Z$, say $Z_1$, satisfies 
$\dist{x,Z_1}{p}^M \le \dist{x,Z}{p}$.
So 
\begin{equation*}
  \dist{x,Z_1}{p}\le \delta_3(\epsilon,Z_1). 
\end{equation*}
The induction hypothesis now guarantees
$\dist{x,S_i}{p}\le \epsilon$ for some $i$, as desired. 
\end{proof}

\section{A Brief Review of Serre-Tate Theory}
\label{sec:serretate}

We recall some consequences of Serre-Tate theory for an 
ordinary abelian
variety $A$ defined over 
 algebraically closed field $\kappa$ of characteristic $p>0$. 
Our applications will however be restricted to elliptic curves.

Let $R$ be an Artinian local ring with residue field $\kappa$. 
An admissible pair is a tuple $(\mathcal A,f)$ with $\mathcal{A}\rightarrow \spec{R}$ 
an abelian scheme and $f:A\rightarrow \mathcal{A}\otimes_R{\kappa}$ an
isomorphism of abelian varieties. 
Two admissible pairs $(\mathcal A,f),(\mathcal A',f')$ are called
equivalent if there exists an isomorphism
$\mathcal{A}\rightarrow\mathcal{A'}$ of abelian schemes over
$\spec{R}$ whose restriction to the special 
fiber composed with  $f$ is $f'$.
The  equivalence classes of admissible pairs form a set
$\modfunc{R}{A}$. The association
 $R\mapsto \modfunc{R}{A}$ is a functor from the category of
Artinian local rings to the category of sets.

  Let $M_R$ be the maximal ideal of $R$.
 We define the group
\begin{equation*}
  \widehat{\IG}_m(R) = 1 + M_R.
\end{equation*}
Note that there is an $n$ with $p^nM_R=0$ by Nakayama's Lemma. So the
abelian group $1+M_R$ is a $\IZ_p$-module. 
Moreover, $\widehat{\IG}_m$ is a functor  from   Artinian local rings with
residue field $\kappa$ to the category of $\IZ_p$-modules.

Since $A$ is ordinary, 
the subgroup $A[p^n] \subset A(\kappa)$ of elements of order
dividing $p^n$ is isomorphic to 
$(\IZ/p^n\IZ)^{\dim A}$ for all  $n\in \IN$. 
Let 
\begin{equation*}
  \Tp{A} = \varprojlim A[p^n]
\end{equation*}
 denote the
Tate module of $A$. It is a free $\IZ_p$-module of rank $\dim
A$.
Let $\dual{A}$ denote the dual abelian variety.

\begin{theorem}[Serre-Tate] 
There exists a  natural isomorphism
\begin{equation*}
  q: \modfuncS_A \rightarrow
    \hom{\Tp{A}\otimes\Tp{\dual{A}},\widehat{\IG}_m}
\end{equation*}
 of functors.
\end{theorem}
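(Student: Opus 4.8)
The plan is to construct the isomorphism $q$ directly from the deformation theory of ordinary abelian varieties, following the classical approach of Lubin–Serre–Tate (see also Katz, \emph{Serre-Tate local moduli}). First I would pass from an abelian scheme to its associated $p$-divisible (Barsotti–Tate) group. Over the algebraically closed field $\kappa$ of characteristic $p$, ordinariness of $A$ means that the $p$-divisible group $A[p^\infty]$ sits in a \emph{canonical} short exact sequence of $p$-divisible groups $0\to A[p^\infty]^{\mathrm{tor}}\to A[p^\infty]\to A[p^\infty]^{\mathrm{et}}\to 0$, with $A[p^\infty]^{\mathrm{et}}$ \'etale of height $\dim A$ (its Tate module is $\Tp{A}$) and $A[p^\infty]^{\mathrm{tor}}$ of multiplicative type, being the Cartier dual of the \'etale $p$-divisible group attached to $\dual{A}$ (so its Tate module is $\Tp{\dual A}$). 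The first key step is therefore: reduce the deformation problem for $\mathcal A$ over an Artinian local $R$ with residue field $\kappa$ to the deformation problem for the $p$-divisible group $A[p^\infty]$; this is the Serre–Tate rigidity theorem, asserting that $\modfuncS_A(R)$ is naturally identified with the set of deformations of $A[p^\infty]$ to $R$. The proof of this reduction uses Grothendieck's existence/algebraization together with the infinitesimal deformation theory of abelian schemes versus that of their $p$-divisible groups, the point being that the prime-to-$p$ and \emph{unipotent}/non-$p$-divisible deformations are rigid.

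The second key step is to analyze deformations of the $p$-divisible group. Because \'etale and multiplicative $p$-divisible groups are themselves rigid (they admit no nontrivial infinitesimal deformations over an Artinian base with the given residue field), a deformation of $A[p^\infty]$ is determined entirely by the extension class of the deformed sequence $0\to \mathcal G^{\mathrm{tor}}\to \mathcal G\to \mathcal G^{\mathrm{et}}\to 0$, where $\mathcal G^{\mathrm{et}}$ and $\mathcal G^{\mathrm{tor}}$ are the unique lifts of $A[p^\infty]^{\mathrm{et}}$ and $A[p^\infty]^{\mathrm{tor}}$. Thus $\modfuncS_A(R)\cong \mathrm{Ext}^1_R(\mathcal G^{\mathrm{et}},\mathcal G^{\mathrm{tor}})$ in the category of $p$-divisible (fppf) sheaves. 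Computing this $\mathrm{Ext}$ group is the heart of the matter: using that $\mathcal G^{\mathrm{et}}$ is \'etale (so pro-represented by $\Tp{A}$ as a constant group) and $\mathcal G^{\mathrm{tor}}$ is of multiplicative type (so a twist of $\widehat{\IG}_m$ by $\Tp{\dual A}$), one gets
\begin{equation*}
  \mathrm{Ext}^1_R(\mathcal G^{\mathrm{et}},\mathcal G^{\mathrm{tor}})
  \;\cong\; \homS\bigl(\Tp{A}\otimes_{\IZ_p}\Tp{\dual A},\,\widehat{\IG}_m(R)\bigr),
\end{equation*}
which is exactly the right-hand side of the asserted isomorphism. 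The bilinear-form shape $\Tp{A}\otimes\Tp{\dual A}$ comes from the duality pairing interchanging $A$ and $\dual A$ combined with Cartier duality identifying $\mathcal G^{\mathrm{tor}}$ with $\mathrm{Hom}(\Tp{\dual A},\widehat{\IG}_m)$.

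Finally I would check naturality of $q$ in $R$, i.e.\ that the identification is compatible with morphisms of Artinian local rings — this is essentially automatic once each step above is performed functorially, since algebraization, the canonical splitting of an ordinary $p$-divisible group, and the $\mathrm{Ext}$ computation are all functorial constructions. The main obstacle I anticipate is the first step — proving Serre–Tate rigidity, i.e.\ that deforming the abelian scheme is equivalent to deforming its $p$-divisible group — since this requires the nontrivial input that abelian schemes deform rigidly once their $p$-divisible groups are fixed; the rest ($\mathrm{Ext}$ computation, Cartier duality bookkeeping) is comparatively formal. Since the excerpt says applications are restricted to elliptic curves, one could alternatively give the elliptic-curve case directly, where $\dim A=1$, $\Tp{E}$ and $\Tp{\dual E}=\Tp{E}$ are both rank-one, and $\modfuncS_E(R)\cong\widehat{\IG}_m(R)=1+M_R$; but the functorial proof above is what I would present, as it is no harder in spirit and gives the stated bilinear form.
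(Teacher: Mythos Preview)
Your proposal is a correct outline of the standard proof of the Serre--Tate theorem, essentially the argument in Katz's \emph{Serre--Tate local moduli}. However, the paper does not prove this statement at all: its entire proof reads ``This is part of Theorem 2.1 \cite{Katz:serretate}.'' The theorem is quoted as background input, not established within the paper.

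So there is no real comparison to make between approaches. Your sketch is sound --- the reduction from abelian schemes to $p$-divisible groups (Serre--Tate rigidity), the rigidity of the \'etale and multiplicative pieces, and the identification of deformations with extension classes computed via Cartier duality is exactly how Katz proceeds --- but for the purposes of this paper you should simply cite Katz's Theorem 2.1 and move on. If you were asked to reproduce the paper's proof, a one-line citation is what is expected here.
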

\begin{proof}
This is part of Theorem 2.1 \cite{Katz:serretate}.
\end{proof}

Now assume $K$ is a finite extension of the maximal unramified
extension of $\IQ_p$ in $\IQpbar$.
The ring of integers $\mathcal O$ of $K$ is a discrete valuation ring
with maximal ideal $M_{\mathcal O}$. The residue field
 $\kappa=\mathcal O/M_{\mathcal O}$ is an algebraic closure of $\IF_p$. 
 We set 
\begin{equation*}
   \modfunc{\mathcal O}{A} = \varprojlim \modfunc{\mathcal O/M_{\mathcal O}^{n+1}}{A}
\quad\text{and}\quad
\widehat{\IG}_m(\mathcal O) = \varprojlim\widehat{\IG}_m(\mathcal O/M_{\mathcal O}^{n+1}) = 
1+M_{\mathcal O}.
\end{equation*}
The former limit is to be understood as the set of 
 formal deformations of $A$. The latter is a $\IZ_p$-module. 

Let $\mathcal{A}\rightarrow \spec{\mathcal O}$ be an abelian scheme
and say $A\rightarrow
\mathcal{A}\otimes_{\mathcal O} \kappa$ is an isomorphism of abelian varieties. 
The direct system of schemes
$\mathcal{A}\otimes_{\mathcal O}(\mathcal{O}/M_{\mathcal O}^{n+1})$
yields
an element
\begin{equation*}
  q(\mathcal{A}) \in \varprojlim
\hom{\Tp{A}\otimes\Tp{\dual{A}},\widehat{\IG}_m(\mathcal O/M_{\mathcal O}^{n+1})}
=
\hom{\Tp{A}\otimes\Tp{\dual{A}},\widehat{\IG}_m(\mathcal O)}
\end{equation*}
where we take $\IZ_p$-module homomorphisms.
The choice of  a $\IZ_p$-basis of $\Tp{A}$ and of $\Tp{\dual{A}}$ induces
an isomorphism 
\begin{equation*}
  \hom{\Tp{A}\otimes\Tp{\dual{A}},\widehat{\IG}_m(\mathcal O)}\cong
\widehat{\IG}_m(\mathcal O)^{(\dim A)^2}.
\end{equation*}
Serre and Tate's Theorem enriches the set of formal deformations
of $A$ with the structure of an abelian group. 
De Jong and Noot characterized the elements of finite order.

\begin{theorem}[de Jong-Noot]
We keep the notation from above. 
Then $q(\mathcal{A})$ has finite order if and only if 
$\mathcal{A}\otimes_{\mathcal O} K$ has complex multiplication. 
\end{theorem}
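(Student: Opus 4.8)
The plan is to prove the de Jong--Noot characterization by translating ``$q(\mathcal{A})$ has finite order'' into a statement about the Galois action on the Tate module and comparing with the classical criterion for complex multiplication in characteristic zero. Concretely, I would pick a finite extension $K'/K$ with ring of integers $\mathcal{O}'$ over which $\mathcal{A}$ acquires a lift that is ``as split as possible'': after such a base change one may assume, by the structure of $\widehat{\IG}_m(\mathcal{O})$ as a $\IZ_p$-module and the fact that $q(\mathcal{A})$ determines the deformation, that finiteness of $q(\mathcal{A})$ is equivalent to the existence of an integer $m\ge 1$ with $[m]^{*}$-type relation making the deformation isotrivial after passing to $\mathcal{O}/M_{\mathcal{O}}^{n+1}$ in a compatible way. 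The key point is that finite order of $q(\mathcal{A})$ forces the formal deformation to be, up to isogeny, the canonical (Serre--Tate) lift of $A$, and the canonical lift of an ordinary abelian variety is the unique lift all of whose endomorphisms reduce from the special fiber; since $A/\kappa$ is isogenous to a product of ordinary abelian varieties with large (CM) endomorphism algebra, this propagates to $\mathcal{A}\otimes_{\mathcal{O}}K$.

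The steps, in order, would be: (1) Recall that the canonical lift $A^{\mathrm{can}}$ corresponds under $q$ to the trivial homomorphism $\Tp{A}\otimes\Tp{\dual{A}}\to\widehat{\IG}_m$, and that $\mathrm{End}(A^{\mathrm{can}})=\mathrm{End}(A)$; since $A$ is ordinary its endomorphism algebra contains a CM field of degree $2\dim A$, so $A^{\mathrm{can}}\otimes K$ has complex multiplication. (2) Show the ``only if'' direction: if $q(\mathcal{A})$ has order $m$, then the homomorphism $q(\mathcal{A})$ lands in the $m$-torsion of $\widehat{\IG}_m(\mathcal{O})$; but $\widehat{\IG}_m(\mathcal{O})=1+M_{\mathcal{O}}$ is torsion-free as a $\IZ_p$-module once $M_{\mathcal{O}}$ is small (it is a pro-$p$ group with no $p$-torsion in the relevant range), so in fact $q(\mathcal{A})$ is already trivial, i.e. $\mathcal{A}$ is the canonical lift, and we conclude by (1). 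Here one must be slightly careful about roots of unity: $1+M_{\mathcal{O}}$ can contain $p$-power roots of unity, so the argument is that $q(\mathcal{A})$ of finite order lies in the torsion subgroup $\mu_{p^\infty}(K)$, which is finite, and then a Frobenius-descent / rigidity argument shows the corresponding deformation is isogenous to the canonical one. (3) Show the ``if'' direction: if $B=\mathcal{A}\otimes_{\mathcal O}K$ has complex multiplication by a CM field $E$ with $[E:\IQ]=2\dim A$, then $E$ acts on $\Tp{A}$ and on $\Tp{\dual{A}}$, hence on the abelian group $\modfunc{\mathcal{O}}{A}$, and the functoriality of $q$ shows $q(\mathcal{A})$ is fixed (up to the CM action) in a way forcing it into the kernel of multiplication by a nonzero element of $\IZ$; combined with torsion-freeness this again gives finite order. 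Throughout I would cite Katz \cite{Katz:serretate} for the compatibility of $q$ with isogenies and duality, which is what makes the CM-action on $\modfuncS_A$ available.

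The main obstacle I expect is making rigorous the equivalence ``$q(\mathcal{A})$ finite order $\iff$ $\mathcal{A}$ is, up to isogeny, the canonical lift.'' One direction needs the fact that the only finite-order points can be accounted for by $p$-power roots of unity in $K$, and that such points still correspond to lifts with all endomorphisms defined — this requires either invoking the Grothendieck--Messing / Serre--Tate comparison more carefully, or a direct computation with the bilinear form $q(\mathcal{A})$ showing that the CM order $\mathcal{O}_E\subset\mathrm{End}(A)$ acting by $a\mapsto q(\mathcal{A})(a\xi\otimes\eta)$ must factor through $\IZ$ on the torsion part. The cleanest route is probably to reduce to the case where $A$ is simple (replacing $A$ by an isogenous product is harmless since $q$ is compatible with isogeny and finiteness of order is an isogeny invariant of the deformation), invoke the one-dimensional formal torus picture coordinatewise via the chosen $\IZ_p$-bases, and then cite that the de Jong--Noot theorem in this form is precisely the content of \cite{deJongNoot}; I would state the reduction steps and then defer the core equivalence to that reference rather than reprove it.
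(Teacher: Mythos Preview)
The paper does not prove this theorem at all: its entire proof is the single line ``See Proposition 3.5 \cite{deJongNoot}.'' So the intended answer here is simply to cite de Jong and Noot, which you in fact do in your final sentence after a long detour.

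Your sketch, viewed as an attempt at an independent proof, has a real gap in step~(2). You first assert that $1+M_{\mathcal O}$ is torsion-free, then correctly retract this (it contains $\mu_{p^\infty}(K)$), and then fall back on an unspecified ``Frobenius-descent / rigidity argument'' to conclude that a finite-order deformation is ``isogenous to the canonical one.'' That phrase is not quite right and the argument is missing. The correct picture is that the canonical lift corresponds to $q=1$, while the CM lifts are exactly those with $q$ a root of unity; there are genuinely many of them, not just one up to isogeny. The clean way to run the ``only if'' direction is: if $q(\mathcal A)$ has order $p^k$, then pulling back along multiplication by $p^k$ on the formal torus (which on the level of abelian schemes is an isogeny of lifts) carries $\mathcal A$ to the canonical lift; since the canonical lift has CM and CM is an isogeny invariant, $\mathcal A\otimes_{\mathcal O}K$ has CM. Your step~(3) gestures at the right functoriality from \cite{Katz:serretate} but does not actually carry it out. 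Since the paper is content to cite the result, the honest move is to drop the sketch and cite \cite{deJongNoot} directly.
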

\begin{proof}
  See Proposition 3.5 \cite{deJongNoot}.
\end{proof}

\section{Approximating Special Subvarieties}
\label{sec:special}

 We recall that $\Phi_N\in \IZ[X,Y]$ denotes the modular
polynomial of level $N\in\IN$.

\begin{lemma}
\label{lem:ordPhiN}
Suppose $N\in\IN$ and
  let $x_1,x_2\in\mathcal{O}_p$ be
ordinary singular moduli. If
\begin{equation*}
 \ord_p \Phi_N(x_1,x_2) > 6\frac{\Psi(N)}{p-1}
\quad\text{then}\quad \Phi_N(x_1,x_2)=0.
\end{equation*}
\end{lemma}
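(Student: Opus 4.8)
The plan is to deduce the Lemma from a factorisation of the modular polynomial, together with Serre--Tate theory and the de Jong--Noot theorem. Let $E_1$ be an elliptic curve with $j$-invariant $x_1$; it has complex multiplication and, by hypothesis, good ordinary reduction. Since $\Phi_N$ is monic in $Y$, one has $\Phi_N(x_1,Y)=\prod_C\bigl(Y-j(E_1/C)\bigr)$, the product over the $\Psi(N)$ cyclic subgroups $C\subset E_1$ of order $N$ (taken with multiplicity). Each quotient $E_1/C$ is isogenous to $E_1$, so it too has complex multiplication and good ordinary reduction, whence $j(E_1/C)$ is again an ordinary singular modulus and in particular an algebraic integer. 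Taking $\ord_p$ of the factorisation gives
\begin{equation*}
\ord_p\Phi_N(x_1,x_2)=\sum_C\ord_p\bigl(x_2-j(E_1/C)\bigr),
\end{equation*}
a sum of $\Psi(N)$ terms. So it suffices to prove that any two \emph{distinct} ordinary singular moduli $x,x'$ satisfy $\ord_p(x-x')\le 6/(p-1)$; the Lemma then follows by contraposition, since $\Phi_N(x_1,x_2)\ne 0$ forces $j(E_1/C)\ne x_2$ for every $C$.

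To prove this inequality I would pass to a complete discrete valuation ring. Choose a finite extension $K$ of the maximal unramified extension of $\IQ_p$ in $\IQpbar$ over which the elliptic curves $E,E'$ with $j$-invariants $x,x'$, and their complex multiplication, are defined and have good reduction; let $\mathcal O$ be its ring of integers and $\kappa=\overline{\IF}_p$ the residue field. We may assume the reductions satisfy $\bar x=\bar x'=:\bar\jmath=j(\bar E)$ for an ordinary curve $\bar E/\kappa$, as otherwise $\ord_p(x-x')=0$. Fixing isomorphisms of the special fibres of $E$ and $E'$ with $\bar E$ realises them as formal deformations of $\bar E$, so by Serre--Tate theory they have parameters $q,q'\in\widehat{\IG}_m(\mathcal O)=1+M_{\mathcal O}$; here one uses that $\Tp{\bar E}$ and $\Tp{\dual{\bar E}}$ are free of rank one, so $\hom{\Tp{\bar E}\otimes\Tp{\dual{\bar E}},\widehat{\IG}_m}\cong\widehat{\IG}_m$. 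Since $E$ and $E'$ have complex multiplication over $K$, the de Jong--Noot theorem forces $q$ and $q'$ to be torsion, hence $p$-power roots of unity in $1+M_{\mathcal O}$.

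It remains to bound $\ord_p(x-x')=\ord_p\bigl(\jmath(q)-\jmath(q')\bigr)$, where $\jmath(q)\in\mathcal O[[q-1]]$ denotes the $j$-invariant of the universal deformation, expanded in the Serre--Tate parameter. The group $\mathrm{Aut}(\bar E)$ acts on $\widehat{\IG}_m$ through a finite subgroup $G\subset\IZ_p^\times$, acting by $q\mapsto q^\zeta$, of order $e$ dividing $\#\mathrm{Aut}(\bar E)\le 6$ (with $e=1$ unless $\bar\jmath\in\{0,1728\}$, and $e$ prime to $p$), and $\jmath$ identifies $\widehat{\IG}_m/G$ with the formal completion of the $j$-line at $\jmath(1)$. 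Comparing divisors on $\widehat{\IG}_m\times\widehat{\IG}_m$ then yields
\begin{equation*}
\jmath(q)-\jmath(q')=U(q,q')\prod_{\zeta\in G}\bigl(q'q^{-\zeta}-1\bigr)
\end{equation*}
with $U$ a unit, so that $\ord_p(x-x')=\sum_{\zeta\in G}\ord_p(q'q^{-\zeta}-1)$. Because $x\ne x'$ we have $q'\notin G\cdot q$, so each $q'q^{-\zeta}$ is a \emph{nontrivial} $p$-power root of unity, and $\ord_p(\eta-1)\le 1/(p-1)$ for every such $\eta$ (with equality at the primitive $p$-th roots of unity). Hence $\ord_p(x-x')\le e/(p-1)\le 6/(p-1)$, as required.

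The step I expect to be the main obstacle is this last one: verifying carefully that the $j$-invariant of the universal deformation is an integral power series in the Serre--Tate parameter, and that near the elliptic points $j=0,1728$ the $j$-line is genuinely the quotient of the Serre--Tate disc by the stated $\mathrm{Aut}(\bar E)$-action, so that the product factorisation holds with a unit prefactor. The ramification there is exactly what rules out the optimal constant $1/(p-1)$; a closer count in fact gives $3/(p-1)$, so the bound $6/(p-1)$ is comfortable. Everything else — the factorisation of $\Phi_N$, isogeny invariance of complex multiplication and of ordinary good reduction, the theorems of Serre--Tate and de Jong--Noot, and the elementary estimate on $\ord_p(\eta-1)$ — is routine once $E$ and $E'$ have been spread out over $\mathcal O$.
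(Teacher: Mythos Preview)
Your argument is correct and follows the same overall arc as the paper: both reduce the statement for general $N$ to the claim that two \emph{distinct} ordinary singular moduli $x,x'$ satisfy $\ord_p(x-x')\le 6/(p-1)$ (your factorisation $\Phi_N(x_1,x_2)=\prod_C(x_2-j(E_1/C))$ is exactly the paper's observation that the monic polynomial $\Phi_N(x_1,x_2+T)$ of degree $\Psi(N)$ has a root $t$ with $\ord_p(t)\ge\ord_p\Phi_N(x_1,x_2)/\Psi(N)$), and both then spread out over a complete DVR, invoke Serre--Tate and de~Jong--Noot to see that the associated parameters are $p$-power roots of unity in $1+M_{\mathcal O}$, and finish with $\ord_p(\eta-1)\le 1/(p-1)$ for nontrivial such $\eta$.

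The genuine difference is in how the factor $6$ enters. The paper works with explicit integral models---Legendre form $y^2=x(x-1)(x-\lambda)$ for odd $p$, a separate model for $p=2$---and uses the degree-$6$ relation between $j$ and $\lambda$ to find $\lambda_2,\lambda'_2$ with $\ord_p(\lambda_2-\lambda'_2)\ge\tfrac16\ord_p(x_2-x'_2)$; since congruent Legendre parameters yield literally equal deformations modulo the congruence ideal, one gets $\ord_p(q_2-q'_2)\ge\ord_p(\lambda_2-\lambda'_2)>1/(p-1)$, forcing $q_2=q'_2$ and hence $x_2=x'_2$. You instead argue directly that the $j$-map from the Serre--Tate disc $\widehat{\IG}_m$ to the formal $j$-line is the quotient by the image $G\subset\IZ_p^\times$ of $\mathrm{Aut}(\bar E)$, giving the factorisation $j(q)-j(q')=U\prod_{\zeta\in G}(q'q^{-\zeta}-1)$. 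Your route is more conceptual and yields the sharper bound $|G|/(p-1)\le 3/(p-1)$ (since $-1$ acts trivially on $\Tp{\bar E}\otimes\Tp{\dual{\bar E}}$); the paper's route is more elementary and sidesteps the deformation-theoretic input you flag as the main obstacle. That obstacle is real but surmountable: it amounts to knowing that the universal deformation is algebraisable with integral $j$-invariant (which follows from the explicit models, or from formal smoothness of the moduli stack over $\IZ_p$) and that the coarse-moduli map is \'etale of degree $|G|$ at the closed point, which is classical for $p\nmid |G|$. The paper's use of level structure via the $\lambda$-line is really just one concrete way of trivialising the $\mathrm{Aut}(\bar E)$-action that you handle abstractly.
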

\begin{proof}
Let us consider  $P = \Phi_N(x_1,x_2+T) = a_0 + \cdots
+ a_dT^d \in \mathcal{O}_p [T]$. 
The polynomial $\Phi_N$ is monic of
 degree $\Psi(N)$  in $Y$, cf. (\ref{eq:modulardegree}). So 
 $P$ is monic and of degree $\Psi(N)$.
Let $t\in\IC_p$ be a root of $P$ with maximal order. Then
\begin{equation*}
\Psi(N) \ord_p(t) \ge  \ord_p(a_0) =\ord_p \Phi_N(x_1,x_2) > 6
\frac{\Psi(N)}{p-1}\quad\text{hence}\quad
\ord_p(t) > \frac{6}{p-1}. 
\end{equation*}

We have $\Phi_N(x_1,x'_2)=0$ with  $x'_2=x_2+t$.
In particular, $x'_2$ is a singular moduli and
\begin{equation}
\label{eq:j2j2p}
  \ord_p(x'_2-x_2) = \ord_p(t) > \frac{6}{p-1}
\end{equation}
implies that it is ordinary.

We first treat the case $p\not = 2$. Below $K$ will denote a
finite extension of the maximal unramified extension of  $\IQ_p$ to be specified during the argument
below. Let $\mathcal O$ be the ring of integers in $K$
and $M_\mathcal O$ the maximal ideal of $\mathcal O$. 

Let us fix a root $\lambda_2\in K$ of
\begin{equation*}
  2^8 (1-\lambda_2(1-\lambda_2))^3 - x_2 \lambda_2^2(1-\lambda_2)^2 = 0.
\end{equation*}
So $\lambda_2\in\mathcal{O}_p$
 since it is integral over $\mathcal{O}_p$. 
The ultrametric triangle inequality implies
$\ord_p \lambda_2 = 0$ and $\ord_p(1-\lambda_2)=0$. The equation 
\begin{equation*}
  y^2 = x(x-1)(x-\lambda_2)
\end{equation*}
defines an elliptic curve with $j$-invariant $x_2$. The
coefficients of this model are integers in $K$
and its discriminant  is
$2^4\lambda_2^2(\lambda_2-1)^2$. 
Thus our elliptic curve has good reduction. In other words, we
 obtain an elliptic scheme $\mathcal{E}_2\rightarrow\spec{\mathcal O}$
whose special fiber $E_2$ is an elliptic curve over the residue field
of $K$. 

After possibly increasing $K$ we find
$\lambda'_2\in K$ with 
\begin{equation*}
  2^8 (1-\lambda'_2(1-\lambda'_2))^3 - x'_2 {\lambda_2'}^2(1-\lambda'_2)^2 = 0
\end{equation*}
and
\begin{equation}
\label{eq:ordplambda}
  \ord_p(\lambda'_2-\lambda_2) \ge \frac 16 {\ord_p(x'_2-x_2)}>  \frac{1}{p-1}. 
\end{equation}
This elements gives us a second elliptic curve in Weierstrass form
\begin{equation*}
  y^2 = x(x-1)(x-\lambda'_2).
\end{equation*}
with $j$-invariant $x'_2$. As above we find that this curve has good
reduction and thus it yields an elliptic scheme
$\mathcal{E}'_2\rightarrow \spec{\mathcal O}$ with special fiber $E_2$. 

Note that  $\lambda'_2$ and $\lambda_2$ have equal
reduction. Therefore,  ${E_2}$ and 
${E'_2}$ are the same ordinary elliptic curve over the
residue field of $\mathcal O$. 
We fix a generator of 
$\Tp{{E_2}}$ and henceforth consider
the Serre-Tate parameter
as an element of $1+M_{\mathcal O}$.
Let $q_2$ and $q'_2$ be the 
Serre-Tate parameters of
$\mathcal{E}_2$ and $\mathcal{E}'_2$. 
Then $\mathcal{E}_2$ and $\mathcal{E}'_2$ are isomorphic modulo
$p^{\ord_p(\lambda_2-\lambda'_2)}$
and we obtain
\begin{equation}
\label{eq:ordp}
  \ord_p(q_2-q'_2) \ge \ord_p(\lambda_2-\lambda'_2) >
\frac{1}{p-1}
\end{equation}
from (\ref{eq:ordplambda}).

The case $p=2$ is  similar, but we  cannot rely on the
Legendre model as it necessarily leads to a curve of bad reduction.
In any case $\ord_2(x_2) = 0$ since 
an elliptic curve with $j$-invariant $0$ is supersingular in characteristic $2$. 
This and (\ref{eq:j2j2p}) entail $\ord_2(x'_2)=0$.
The $j$-invariant of the elliptic curve defined by
\begin{equation*}
  y^2+xy = x^3 - \frac{36}{x_2-1728}x - \frac{1}{x_2-1728}
\end{equation*}
is $x_2$. The coefficients involved are in $\mathcal O$ and
 the discriminant  equals
$x_2^2(x_2-1728)^{-3}$.
As before we get an elliptic scheme $\mathcal{E}_2$ with special fiber
$E_2$. 
And again
we introduce an  elliptic scheme $\mathcal{E}'_2$ with special fiber
$E_2$ and determined by
\begin{equation*}
  y^2+xy = x^3 - \frac{36}{x'_2-1728}x - \frac{1}{x'_2-1728}.
\end{equation*}
The generic fiber has
 $j$-invariant $x'_2$.
Just as in the case of odd characteristic  the
corresponding Serre-Tate parameters $q_2,q'_2$ satisfy
\begin{equation}
\label{eq:ord2}
  \ord_2(q_2-q'_2) \ge \ord_2(\lambda_2-\lambda'_2) > \frac{1}{p-1}=1.
\end{equation}

Now we suppose again that $p$ is arbitrary. 
We note that (\ref{eq:ordp}) and (\ref{eq:ord2}) both lead to 
\begin{equation}
\label{eq:ordpfinal}
  \ord_p(q_2-q'_2) > \frac{1}{p-1}.
\end{equation}

Since the generic fibers of 
 $\mathcal{E}_2$ and $\mathcal{E}'_2$ have complex
multiplication,  $\zeta = q'_2 q_2^{-1}$ is a 
root of unity
by de Jong and Noot's Theorem. 
Any root of unity in $1+M_\mathcal O$ has order $p^e$ for some integer $e\ge
0$ and it is classical that
\begin{equation*}
  \zeta=1 \quad\text{or}\quad \text{$\zeta\not=1$ and }\ord_p(1-\zeta)
=  \frac{1}{p^{e-1}(p-1)} \le \frac{1}{p-1},
\end{equation*}
cf. Lemma I.10.1 \cite{Neukirch}.

By (\ref{eq:ordpfinal})  we must have $\zeta=1$ and so $q_2=q'_2$. 
Therefore, for all $n\ge 0$ there is an isomorphism
$\mathcal{E}_2 \otimes \spec{\mathcal O / M_{\mathcal O}^{n+1}}
\rightarrow \mathcal{E}'_2\otimes \spec{\mathcal O / M_{\mathcal
    O}^{n+1}}$
and  these isomorphisms are compatible with the
morphisms
induced by the base change
$\spec{\mathcal O/M_{\mathcal O}^{n+1}}\rightarrow
\spec{\mathcal O/M_{\mathcal O}^{n+2}}$.
This entails that $\mathcal E_2$ and ${\mathcal E}'_2$
are isomorphic as schemes over $\spec{\mathcal O}$, cf. Scholie 5.4.2 \cite{EGAIIIa}. 
In particular,  the $j$-invariants $x_2$ and $x'_2$ 
of the respective generic fibers coincide. We deduce
$\Phi_N(x_1,x_2)=0$, as desired.
\end{proof}

Even the case $N=1$ of the previous lemma will play are role in our
application. So we formulate it separately.

\begin{lemma}
\label{lem:jdifference}
  Let $x_1,x_2\in\mathcal{O}_p$ be ordinary singular moduli. If
  \begin{equation*}
    \ord_p (x_1-x_2) > \frac{6}{p-1}\quad\text{then}\quad x_1=x_2. 
  \end{equation*}
\end{lemma}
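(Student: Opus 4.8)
The plan is to deduce Lemma \ref{lem:jdifference} as the special case $N=1$ of Lemma \ref{lem:ordPhiN}. Recall that $\Phi_1(X,Y) = X - Y$, which corresponds to the degenerate Hecke correspondence that connects an elliptic curve to itself by the identity isogeny; accordingly $\Psi(1) = 1$ since the product in (\ref{eq:modulardegree}) over primes dividing $1$ is empty.

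Given ordinary singular moduli $x_1,x_2\in\mathcal{O}_p$ with $\ord_p(x_1-x_2) > 6/(p-1)$, I would simply observe that $\ord_p\Phi_1(x_1,x_2) = \ord_p(x_1-x_2) > 6/(p-1) = 6\Psi(1)/(p-1)$. Lemma \ref{lem:ordPhiN} applied with $N=1$ then yields $\Phi_1(x_1,x_2) = 0$, i.e.\ $x_1 = x_2$.

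There is essentially no obstacle here; the only thing to check is that the proof of Lemma \ref{lem:ordPhiN} does not secretly assume $N\ge 2$. Inspecting that proof: the polynomial $P = \Phi_1(x_1,x_2+T) = x_1 - x_2 - T$ is indeed monic of degree $\Psi(1)=1$ in $T$, its unique root $t = x_1 - x_2$ has $\ord_p(t) = \ord_p(a_0)/\Psi(1) > 6/(p-1)$, and then $x_2' = x_2 + t = x_1$ is again an ordinary singular moduli with $\ord_p(x_2'-x_2) > 6/(p-1)$; the Serre-Tate argument comparing the formal deformations of the good reduction models of $x_2$ and $x_2'$ goes through verbatim and forces $x_2 = x_2'$, hence $x_1 = x_2$. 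So the lemma is immediate.

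Thus the proof is a one-line invocation:
\begin{proof}
Apply Lemma \ref{lem:ordPhiN} with $N=1$, noting that $\Phi_1(X,Y)=X-Y$ and $\Psi(1)=1$.
\end{proof}
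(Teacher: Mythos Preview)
Your proposal is correct and matches the paper's own proof exactly: the paper simply invokes Lemma \ref{lem:ordPhiN} with $N=1$, using $\Phi_1 = X-Y$. Your additional verification that the argument of Lemma \ref{lem:ordPhiN} goes through for $N=1$ is a welcome sanity check but not strictly needed.
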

\begin{proof}
  This follows from the previous lemma in the case $N=1$
since $\Phi_1 = X-Y$. 
\end{proof}

We now prove the main result of this section.

\begin{proposition}
  Let $S\subset Y(1)^n$ be a special subvariety. There exists a
  constant $\epsilon > 0$ with the following property. 
If  $x=(x_1,\ldots,x_n)\in \mathcal{O}_p^n$
is an ordinary CM point with $x\not\in S(\IC_p)$, then
\begin{equation*}
\dist{x,S}{p}\ge \epsilon.
\end{equation*}
\end{proposition}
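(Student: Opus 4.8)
The plan is to reduce the statement for an arbitrary special subvariety $S$ to the coordinatewise content of Lemma~\ref{lem:ordPhiN} and Lemma~\ref{lem:jdifference}, exploiting the explicit description of special subvarieties given in Section~\ref{sec:hecke}. Recall that, after permuting coordinates, $S$ is cut out by equations of the shape $\Phi_{N_{jk}}(x_{i_{j1}},x_{i_{jk}})=0$ together with $x_i = c_i$ for $i\in S_0$, where the $c_i$ are singular moduli. Since there are only finitely many such equations, say involving modular polynomials of levels $N_1,\dots,N_M$ (including level $1$ for the coordinates pinned to a constant $c_i$), I would set
\begin{equation*}
\epsilon = \min\bigl\{ c\, p^{-6\Psi(N_\ell)/(p-1)} ;\ 1\le \ell\le M \bigr\}
\end{equation*}
for a suitable constant $c>0$ coming from an ideal-membership comparison, and argue by contraposition: if $x$ is an ordinary CM point with $\dist{x,S}{p} < \epsilon$, then $x\in S(\IC_p)$.

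The first step is to translate the distance hypothesis into valuation bounds on the defining polynomials. Each polynomial $\Phi_{N_{jk}}(X_{i_{j1}},X_{i_{jk}})$ and each $X_i - c_i$ lies in $I(S)\cap\mathcal{O}_p[X_1,\dots,X_n]$, so by definition \eqref{eq:distance} we get $|\Phi_{N_{jk}}(x_{i_{j1}},x_{i_{jk}})|_p \le \dist{x,S}{p}$ and $|x_i - c_i|_p \le \dist{x,S}{p}$ for all the relevant indices. (A mild point to check: the $X_i - c_i$ are degree-one polynomials with coefficients in $\mathcal{O}_p$ only if $c_i\in\mathcal{O}_p$, which holds since $c_i$ is a singular modulus; and the $\Phi_{N}$ already have $\IZ$-coefficients.) Rewriting $\dist{x,S}{p} < \epsilon$ as $\ord_p(\cdot) > 6\Psi(N_\ell)/(p-1)$ on the nose uses that $\epsilon \le p^{-6\Psi(N_\ell)/(p-1)}$ and that $|y|_p = p^{-\ord_p(y)}$; the auxiliary constant $c$ is there only in case one prefers to work with a generating set of $I(S)$ rather than these explicit elements, in which case Lemma~\ref{lem:idealmembership} converts membership in $I(S)$ into a bounded combination of the chosen generators, at the cost of a fixed factor.

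The second step applies the two lemmas of this section coordinatewise. For $i\in S_0$ we have $\ord_p(x_i - c_i) > 6/(p-1)$ with both $x_i$ and $c_i$ ordinary singular moduli (ordinariness of $x_i$ is the hypothesis on $x$; ordinariness of $c_i$ follows because $x_i$ is ordinary and $p$-adically close to it, again by Lemma~\ref{lem:jdifference} applied in reverse, or simply because $c_i$ being the limit point forces the same reduction type). Lemma~\ref{lem:jdifference} then gives $x_i = c_i$. For each relation $\Phi_{N_{jk}}(x_{i_{j1}},x_{i_{jk}})$ we have $\ord_p \Phi_{N_{jk}}(x_{i_{j1}},x_{i_{jk}}) > 6\Psi(N_{jk})/(p-1)$ with $x_{i_{j1}},x_{i_{jk}}$ ordinary singular moduli, so Lemma~\ref{lem:ordPhiN} yields $\Phi_{N_{jk}}(x_{i_{j1}},x_{i_{jk}}) = 0$. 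Having verified every defining equation of $S$, we conclude $x\in S(\IC_p)$, contradicting $x\notin S(\IC_p)$; hence $\dist{x,S}{p} \ge \epsilon$.

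\textbf{Main obstacle.} The genuine content has already been isolated in Lemma~\ref{lem:ordPhiN}, whose proof via Serre--Tate parameters and the de~Jong--Noot finiteness criterion is the deep input; the remaining work here is essentially bookkeeping. The one place demanding a little care is the passage from $\dist{x,S}{p}<\epsilon$ to valuation lower bounds for each individual defining polynomial: one must make sure that the polynomials being plugged into Lemmas~\ref{lem:ordPhiN} and~\ref{lem:jdifference} genuinely lie in $I(S)\cap\mathcal{O}_p[X_1,\dots,X_n]$ (they do, by the explicit description of special subvarieties) and that $\epsilon$ has been chosen uniformly against the \emph{finitely many} levels $N_\ell$ occurring, so that no single coordinate relation escapes the bound. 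Once that is pinned down, the induction-free argument closes immediately.
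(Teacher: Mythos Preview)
Your proposal takes the same route as the paper: both feed explicit integral elements of $I(S)$---modular polynomials $\Phi_N(X_i,X_j)$ and linear forms $X_i-c_i$---into Lemmas~\ref{lem:ordPhiN} and~\ref{lem:jdifference}. The paper finds one such polynomial that does not vanish at $x$ and bounds it from below; you argue contrapositively that if all are small then all vanish, hence $x\in S$. These are equivalent, and your handling of the ordinariness of the $c_i$ (via equal reduction when $|x_i-c_i|_p<1$) is fine.

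One point needs tightening. The tree relations $\Phi_{N_{jk}}(X_{i_{j1}},X_{i_{jk}})=0$ you lift from the definition in Section~\ref{sec:hecke} cut out, in general, a \emph{reducible} set of which $S$ is only one irreducible component; already for a single block of size three with $N_{12}=N_{13}=2$ the common zero locus of $\Phi_2(X_1,X_2)$ and $\Phi_2(X_1,X_3)$ splits into several curves. So ``having verified every defining equation of $S$'' does not yet force $x\in S(\IC_p)$. What you need---and what the paper invokes with ``there is $N$ in a finite set depending only on $S''$ and indices $i<j$''---is a finite family of modular polynomials in $I(S)$ whose common zero locus, together with the equations $X_i=c_i$, is exactly $S$; taking $\Phi_{N_{ab}}(X_a,X_b)$ for \emph{every} pair $a,b$ in the same block supplies such a family. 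Enlarge your list accordingly, shrink $\epsilon$ to beat $p^{-6\Psi(N_{ab})/(p-1)}$ over the finitely many levels $N_{ab}$, and your argument closes.
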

\begin{proof}
Without loss of generality, we may suppose $S\not=Y(1)^n$. 
After permuting coordinates we have $S=S'\times S''$
where $S'\subset Y(1)^{n'}$ consists of a
 CM point and no coordinate function is constant
when restricted to $S''\subset Y(1)^{n''}$. 

Say $x=(x',x'')$ is as in the hypothesis
with $x'\in \mathcal{O}_p^{n'}$ and 
$x''\in \mathcal{O}_p^{n''}$.

We suppose first that $n'\ge 1$ and that $x'$ is not the CM point $S'$. 
We can use Lemma \ref{lem:jdifference} to obtain
$\dist{x',S'}{p} \ge p^{-6/(p-1)}$. Here the proposition follows from
Lemma \ref{lem:distprop}(ii).

If $n'=0$ or if  $x'$ is the CM point $S'$, then $S''$ is not a power of
$Y(1)$ and $x''\not\in S''(\IC_p)$. 
By the classification of special subvarieties at the beginning of
Section \ref{sec:hecke} there is  $N\in \IN$ 
in a finite set depending only on $S''$ and indices $n' < i<j\le n$
 with the following
properties. The polynomial
$\Phi_N(X_{i},X_{j})$ vanishes on $S''$ and  
$\Phi_N(x_i,x_{j})\not=0$ for the corresponding coordinates
$x_i,x_{j}$ of $x$. We now refer directly to Lemma \ref{lem:ordPhiN}
to obtain $\ord_p \Phi_N(x_i,x_{j}) \le 6 \Psi(N)/(p-1)$. Since
$\Phi_N(X_i,X_{j})$ is in the ideal of $X$ and has 
coefficients in $\IZ$, we deduce
$\dist{x,S}{p} \ge p^{-6\Psi(N)/(p-1)}$. 
\end{proof}

\begin{proof}[Proof of Theorem \ref{thm:ordmodtv}]
  Our theorem is now a direct consequence of the previous proposition
  and Proposition \ref{prop:ind3}.
\end{proof}

\appendix
\section{Highly Divisible Singular Moduli}

\subsection{Warming-up}

As a warming-up for the proof of  Proposition \ref{prop:approximate} 
we discuss a variation in the case $p=2$. This subsection is 
 conditional on a conjecture on prime
values of quadratic polynomials. Our main tool is a result of 
Gross and Zagier \cite{GrossZagier}.
We proceed by setting up some notation. 

If $K$ is a number field then $\mathcal{O}_K$ denotes its ring of
integers
and  $\cg{K}$ its class group.

Let $\ell \ge 5$ be a prime with
$\ell \equiv 3 \mod 8$. 
Then
 $\left(\frac{-\ell}{2}\right)=\left(\frac{2}{\ell}\right)=-1$ 
where $\left(\frac{\cdot}{\cdot}\right)$ is the Kronecker symbol. 
The imaginary quadratic
 field $K=\IQ(\sqrt{-\ell})$ has discriminant $-\ell\equiv 1
\mod 4$ and the prime
 $2$ is inert in $K$.
There is a unique singular moduli $j$ whose associated
elliptic curve has complex multiplication by 
$\mathcal{O}_K=\IZ[(\sqrt{-\ell}+1)/2]$ with  $j\in \IQ_2$.
We note that $j\not=0$ since $\ell\not=3$. 
The Hilbert class field of $K$ is $K(j)$.
Let $\mathcal{A}\in \cg{K}$, we will make a precise choice later on. 
For $x\in\IR$ we set
\begin{equation*}
  r_\mathcal{A}(x) = \#\{I\in \mathcal{A};\,\,\text{$I$  has norm $x$}\}.
\end{equation*}
We write  $\omega(n)$ for  the number of
distinct prime divisors of a positive integer $n$. 
Let $\sigma\in\gal{H/K}$ be the 
 image of $\mathcal{A}$
under the Artin homomorphism.
On taking the second singular moduli in Gross and Zagier's Proposition 3.8
 \cite{GrossZagier} to equal
$0$ we obtain
  \begin{equation}
\label{eq:ord2j}
  \ord_2(\sigma(j)) = \frac 32 \sum_{k\ge 1}\sum_{x\in\IZ}
2^{\omega(\gcd(2,x))} r_{\mathcal{A}^2}\left(\frac{3\ell-x^2}{2^{2+k}}\right).
  \end{equation}

Let $n$ be  odd and
 suppose that there exists $x\in\IZ$ with 
 \begin{equation}
\label{eq:schinzel}
   3x^2 + 2^{2+n}=\ell.
 \end{equation}
We remark that $\ell$ satisfies all conditions imposed further up. 

After multiplying by $3$ and rearranging we find
$(3\ell - (3x)^2)/2^{2+k} = 2^{n-k}.3$ where $k$ denotes an odd integer
with $1\le k\le n$.
Equation (\ref{eq:schinzel}) implies
$\left(\frac{3x^2}{\ell}\right)
=-\left(\frac{2}{\ell}\right)^{2+n} = - (-1)^{2+n} =1$
since $n$ is odd.
So $1=\left(\frac{3}{\ell}\right)=\left(\frac{-\ell}{3}\right)$
by quadratic reciprocity
and
$3$ splits in $K$. This means that $\mathcal{O}_K$ contains precisely
two ideals of norm
$3$.
By the theory of genera by Gauss and since the discriminant of $K$ is
a prime, $\cg{K}$ has no elements
of order $2$,  cf. Proposition 3.11 \cite{Cox}. So $\mathcal{A}\mapsto\mathcal{A}^2$ induces an
automorphism of $\cg{K}$. Hence we may choose $\mathcal{A}$
such that
 $\mathcal{A}^2$ contains an ideal of norm $3$. 
But $n$ and $k$ are both odd, so $2^{n-k}$ is a perfect square and therefore
$r_{\mathcal{A}^2}(2^{n-k}.3) \ge 1$.
We deduce 
\begin{equation*}
  r_{\mathcal{A}^2}\left(\frac{3\ell - (3x)^2}{2^{2+k}}\right)
\ge 1\quad\text{and hence}\quad 
\sum_{x\in\IZ}
2^{\omega(\gcd(2,x))}  r_{\mathcal{A}^2}\left(\frac{3\ell - x^2}{2^{2+k}}\right)\ge 2.
\end{equation*}

On summing over all odd $k$ with $1\le k\le n$ we
use (\ref{eq:ord2j}) to deduce
\begin{equation*}
  \ord_2(\sigma(j)) \ge \frac 32  (n+1).
\end{equation*}

So $\sigma(j)$ is non-zero and converges to $0$ in the $2$-adic
topology  if $n$ can be made
arbitrarily large. 

However, our construction only works for $n$ if 
there exists $x\in\IZ$ with (\ref{eq:schinzel}) a prime since
Gross and Zagier's result requires 
a prime discriminant.
 Schinzel's Hypothesis predicts that there are infinitely many
$x$ with (\ref{eq:schinzel}) a prime.
This conjecture is open and seems out of reach at the moment.

\subsection{Computations with Quaternions}
\label{sec:quaternion}
 Gross and Zagier's work provides  precise information on the
 $p$-adic valuation of differences of certain singular moduli. 
But cruder estimates are  sufficient for the proof of Proposition
\ref{prop:approximate}. These  will follow 
from very explicit calculations in a quaternion algebra and some ideas
of Gross and Zagier. 

Our quaternion algebras are all over $\IQ$ and as a reference we use
mainly the book of Vign\'eras \cite{Vigneras}. 
Let $K$
be the number field $\IQ(\sqrt{-3})$ and $\Ok{K}=\IZ[\generator]$ its ring of
integers where $\generator = (\sqrt{-3}+1)/2$. Below
$\alpha\mapsto\overline\alpha$ denotes the non-trivial automorphism of
$K\ni \alpha$. 
The different $\mathcal D$ of $K$ is the  ideal
$\sqrt{-3}\Ok{K}$. We define  $\mathfrak{q} =
(2+\sqrt{-3})\Ok{K}$ and remark that $\mathfrak{q}\overline{\mathfrak
  q}=7\Ok{K}$. 
Let us suppose that $p$ is as in Proposition \ref{prop:approximate}. 
Thus $p$ is inert in $K$ and odd. 

For $\alpha,\beta\in K$ we set
\begin{equation*}
  [\alpha,\beta] = \left(
  \begin{array}{cc}
    \alpha & \beta \\ -7p\overline\beta & \overline\alpha
  \end{array}\right)
\quad\text{and}\quad Q = \left\{[\alpha,\beta];\,\, \alpha,\beta\in K\right\}.
\end{equation*}
We  consider $K$ as a subset of $Q$ by virtue of $\alpha\mapsto [\alpha,0]$.
Then $Q$ is a ring because
 $u^2=-7p$ where $u=[0,1]$  and even a
 four dimension $\IQ$-algebra.  We have $u\alpha =
\overline\alpha u$ from which it is easy to deduce
 that $\IQ$ is the center of $Q$.
Moreover, $[\alpha,\beta] [\overline\alpha,-\beta]$ is $\alpha\overline\alpha +
7p\beta\overline \beta \in\IQ$ and vanishes if and only if
$[\alpha,\beta]=0$. 
Hence $Q$ is a quaternion algebra and a skew-field. 
It therefore ramifies at at least one finite place of $\IQ$. 
We will see momentarily that $p$ is such a place and that it is the
only one.

The subset 
\begin{equation*}
  \mathcal O = \left\{[\alpha,\beta];\,\, \alpha \in \mathcal{D}^{-1}, 
 \beta\in \mathfrak q^{-1}\mathcal{D}^{-1},\text{ and }
\alpha- 7\beta \in\Ok{K} \right\}\subset Q
\end{equation*}
is a free $\IZ$-module of rank $4$. 
A short calculation verifies that it is a sub-ring of $Q$. Thus
$\mathcal O$ is
an order of $Q$. 
It is readily checked that 
\begin{equation*}
  [1-2\generator,0], [1-\generator,0],
  \left[\frac{1-2\generator}{3},\frac{4-5\generator}{21}\right], 
\left[1-\generator,\frac{3-9\generator}{21}\right]
\end{equation*}
are elements of $\mathcal O$. Let us denote them by $b_1,\ldots,b_4$. 
In a moment we will prove
 that these elements constitute a $\IZ$-basis of $\mathcal O$,
that $\mathcal O$ is a maximal order, and that $Q$ ramifies
precisely at $p$ and $\infty$.

Indeed, the reduced trace of any element $[\alpha,\beta]\in Q$ is
$\tr{[\alpha,\beta]}=\alpha+\overline \alpha$ so the
 $4\times 4$ matrix with entries
$\tr{[ b_i b_j]}$ is
\begin{equation*}
  -\left(
  \begin{array}{cccc}
    6 & 3 & 2 & 3 \\
    3 & 1 & 1 & 1 \\
    2 & 1 & \frac{2p+2}{3} &  p+1 \\
    3 & 1 & p+1 & 2p+1
  \end{array}\right)
\end{equation*}
and has determinant $-p^2$. 
Thus $\{b_1,\ldots,b_4\}$ is linearly independent
and
 the reduced discriminant $d(\mathcal O)$ of
$\mathcal O$ is a divisor of $p$. But $d(\mathcal O)$ is
 a multiple
 of the product of all finite primes where $Q$ ramifies.
We have seen above that this product is not $1$.
So $d(\mathcal O)=p$ and the remaining claims hold as well.

We write $W\subset \IC_p$ for the ring of  integers in  the completion of the
 maximal unramified algebraic extension of $\IQ_p$.

If $E$ is an elliptic curve defined over a field then $\en{E}$
denotes the ring of its endomorphisms  defined over an algebraic
closure of the base field. 
Say $E$ is the elliptic curve determined by $y^2=x^3+1$ taken
as an equation with coefficients in $W$. It
has complex multiplications by $\Ok{K}$ and good  reduction $E_0$
modulo $pW$ since $p\ge 5$. Moreover, $E_0$ is supersingular because
$p$ is inert in $K$. 

The following statements are well-known facts from the theory of
elliptic curves.  
The ring  $\en{E_0}$  is a maximal
order in $\en{E_0}\otimes\IQ$ and  the latter is
a quaternion algebra which
 is ramified precisely at $p$ and $\infty$. 
Reduction modulo $pW$ induces an injective homomorphism from
$\en{E}$ to  $\en{E_0}$. We will thus take $K$ as a
sub-ring of $\en{E_0}\otimes \IQ$.

Up-to isomorphism there is only one quaternion algebra that is
ramified at $p$ and $\infty$. 
By the Skolem-Noether Theorem we may choose an
 isomorphism $\en{E_0}\otimes\IQ\rightarrow Q$ 
that is compatible with the two inclusions $K \subset \en{E_0}\otimes
\IQ$ and $K\subset Q$.
Since  $\mathcal O$ and $\en{E_0}$ both contain the principal ideal
domain $\Ok{K}$ and all three
are maximal orders, in their respective sense,
  Eichler's Theorem 4, Chapter 4 \cite{Eichler:Tata}
implies that $\mathcal O$ and $\en{E_0}$ are conjugate by an element
of $\Ok{K}$. 
 Henceforth we will identify $\en{E_0}\otimes
\IQ$ with $Q$ and $\en{E_0}$ with $\mathcal O$.  

This setup enables explicit calculations with endomorphisms
of $E_0$. 
 
\begin{lemma}
\label{lem:construct}
  Let $n,x\in \IZ$ with $n\ge 0$ and $x$ odd. Then 
    \begin{equation*}
      \varphi = \left[\frac 12 - \frac{2\generator -1}{2}x,
        \frac{3-2\generator}{7}p^{n}\right]
    \end{equation*}
is an element of $\Ok{K}+p^{n}\mathcal O$
which satisfies
$\varphi^2-\varphi + (1+d)/4=0$ where
$d = 3x^2+4p^{2n+1}$. 
\end{lemma}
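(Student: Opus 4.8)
The claim is a purely algebraic identity in the quaternion algebra $Q$, so the plan is to verify it by direct computation, splitting into the two assertions: membership in $\Ok{K}+p^n\mathcal{O}$, and the quadratic equation for $\varphi$. First I would write $\varphi = [\alpha,\beta]$ with $\alpha = \tfrac12 - \tfrac{2\generator-1}{2}x$ and $\beta = \tfrac{3-2\generator}{7}p^n$, and record the general multiplication rule $[\alpha,\beta][\alpha',\beta'] = [\alpha\alpha' - 7p\beta\overline{\beta'},\,\alpha\beta'+\beta\overline{\alpha'}]$ together with $\overline{\generator} = 1-\generator$ and $\generator\overline{\generator} = \generator(1-\generator) = 1$ (since $\generator^2 = \generator - 1$). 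From these the reduced trace of $[\alpha,\beta]$ is $\alpha+\overline\alpha$ and the reduced norm is $\alpha\overline\alpha + 7p\beta\overline\beta$.

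For the quadratic equation, the key point is that every element of $Q$ satisfies $\varphi^2 - \tr{\varphi}\varphi + n(\varphi) = 0$ where $\tr{}$ and $n$ denote reduced trace and norm. So it suffices to compute $\tr{\varphi}$ and $n(\varphi)$ and check they equal $1$ and $(1+d)/4$ respectively, with $d = 3x^2 + 4p^{2n+1}$. The trace is $\tr{\varphi} = \alpha + \overline\alpha = (\tfrac12 - \tfrac{2\generator-1}{2}x) + (\tfrac12 - \tfrac{2\overline\generator-1}{2}x) = 1 - \tfrac{x}{2}((2\generator-1)+(2\overline\generator-1)) = 1$, using $\generator + \overline\generator = 1$. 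For the norm I would compute $\alpha\overline\alpha$: writing $2\generator - 1 = \sqrt{-3}$, we have $\alpha = \tfrac12(1 - \sqrt{-3}\,x)$, so $\alpha\overline\alpha = \tfrac14(1 + 3x^2)$. Next $\beta\overline\beta = \tfrac{1}{49}p^{2n}(3-2\generator)(3-2\overline\generator)$; since $3 - 2\generator = 3 - (1+\sqrt{-3}) = 2 - \sqrt{-3}$, this is $\tfrac{1}{49}p^{2n}(4+3) = \tfrac{7}{49}p^{2n} = \tfrac{p^{2n}}{7}$. Hence $n(\varphi) = \alpha\overline\alpha + 7p\beta\overline\beta = \tfrac{1+3x^2}{4} + 7p\cdot\tfrac{p^{2n}}{7} = \tfrac{1+3x^2}{4} + p^{2n+1} = \tfrac{1 + 3x^2 + 4p^{2n+1}}{4} = \tfrac{1+d}{4}$, as required.

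For the membership statement I would check that $\varphi - \alpha_0 \in p^n\mathcal{O}$ for a suitable $\alpha_0 \in \Ok{K}$. Recall $\mathcal{O} = \{[\alpha,\beta] : \alpha \in \mathcal{D}^{-1},\ \beta \in \mathfrak{q}^{-1}\mathcal{D}^{-1},\ \alpha - 7\beta \in \Ok{K}\}$, where $\mathcal{D} = \sqrt{-3}\,\Ok{K}$ and $\mathfrak{q} = (2+\sqrt{-3})\Ok{K}$. Since $x$ is odd, write $x = 2m+1$; then $\alpha = \tfrac12 - \tfrac{\sqrt{-3}}{2}(2m+1) = \tfrac{1-\sqrt{-3}}{2} - m\sqrt{-3} = \overline\generator - m\sqrt{-3} \in \Ok{K}$. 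So it suffices to show $[\alpha,\beta] - [\alpha,0] = [0,\beta] \in p^n\mathcal{O}$, i.e. $[0,\beta] = p^n[0,\beta p^{-n}]$ with $[0,\tfrac{3-2\generator}{7}] \in \mathcal{O}$. Here the first coordinate is $0 \in \mathcal{D}^{-1}$, the second is $\tfrac{3-2\generator}{7} = \tfrac{2-\sqrt{-3}}{7}$; one checks $2-\sqrt{-3}$ generates $\overline{\mathfrak q}$ and $7 = \mathfrak{q}\overline{\mathfrak q}$, so $\tfrac{2-\sqrt{-3}}{7} \in \overline{\mathfrak q}^{-1}\overline{\mathfrak q} \cdot \overline{\mathfrak q}^{-1}$... more carefully, $\tfrac{2-\sqrt{-3}}{7}\in \mathfrak{q}^{-1}$, and since $\mathfrak{q}^{-1}\subset \mathfrak{q}^{-1}\mathcal{D}^{-1}$ the second condition holds; finally $0 - 7\cdot\tfrac{2-\sqrt{-3}}{7} = -(2-\sqrt{-3}) \in \Ok{K}$, so the third condition holds. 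Thus $[0,\tfrac{3-2\generator}{7}]\in\mathcal{O}$ and $\varphi = \alpha + p^n[0,\tfrac{3-2\generator}{7}] \in \Ok{K} + p^n\mathcal{O}$.

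The computation is entirely routine once the conventions (the multiplication rule, $\overline\generator = 1-\generator$, and the identifications $2\generator-1 = \sqrt{-3}$, $3-2\generator = 2-\sqrt{-3}$) are fixed; the only place requiring a little care is the membership check, where one must correctly track the fractional ideals $\mathcal{D}^{-1}$ and $\mathfrak{q}^{-1}\mathcal{D}^{-1}$ and verify the congruence condition $\alpha - 7\beta \in \Ok{K}$ — so that is the step I would write out most carefully.
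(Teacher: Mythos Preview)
Your proposal is correct and follows essentially the same approach as the paper: both show $\alpha\in\Ok{K}$ (using $x$ odd and $2\generator-1=\sqrt{-3}$), reduce membership to checking $[0,(3-2\generator)/7]\in\mathcal O$, and obtain the quadratic equation from the reduced trace and norm. The only cosmetic difference is that the paper verifies the second membership condition via the single ideal computation $\mathcal{D}\mathfrak{q}(3-2\generator)/7=\sqrt{-3}\Ok{K}$, whereas you check the three defining conditions of $\mathcal O$ separately; your version is slightly more explicit but otherwise identical.
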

\begin{proof}
  The proof is by direct verification. We abbreviate
  $\varphi=[\alpha,\beta]$ and remark that $\alpha=1/2-(2\generator-1)x/2 =
  (1-\sqrt{-3}x)/2\in \Ok{K}$ since $x$ is odd. 
Hence it suffices to verify $[0,(3-2\generator)/7]\in \mathcal O$.
But this follows from
$\mathcal{D}\mathfrak{q}(3-2\generator)/7 =\sqrt{-3}(2+\sqrt{-3})(3-2\generator) /7\Ok{K} = \sqrt{-3}\Ok{K}$.

To prove the second claim we remark that $\varphi^2 - T \varphi + N =
0$ where $T$ and $N$ are, respectively, the reduced trace and reduced norm of $\varphi$.
Now $T=\alpha+\overline\alpha = 1$ and
\begin{equation*}
  N = \alpha\overline\alpha + 7p\beta\overline \beta = 
\frac{1+3x^2}{4}+p^{2n+1}\frac{(3-2\generator)(3-2\overline\generator)}{7}
= \frac{1+3x^2}{4}+p^{2n+1} = \frac{1+d}{4}.\qedhere
\end{equation*}
\end{proof}

We remark that $d$ as in this lemma satisfies $-d\equiv 1 \mod
4$.

Let $n\ge 0$ be an integer and let $E$ be the elliptic curve from
above. Let
$E_n$ denote the elliptic scheme defined
by the base change  to   $\spec{W/p^{n+1} W}$  of the elliptic scheme
over $\spec{W}$ determined by 
 $y^2=x^3+1$.
The reduction operation implies that 
the  endomorphism rings  are filtered
as
\begin{equation*}
\Ok{K}\subset \cdots\subset  \en{E_n}\subset \cdots\subset\en{E_1}\subset \en{E_0}.
\end{equation*}

Let $\widehat E$ be the formal group law
attached to $E$ with respect to the model $y^2=x^3+1$. It is a power series in two variables and
coefficients in $W$. If we reduce this power series modulo
$p$ we obtain the formal group law $\widehat E_0$ of $E_0$. 

Suppose that $\varphi \in \en{E_0}$ and let $\widehat \varphi\in
W/pW[[T]]$  be the power series representing $\varphi$. 
Recall that multiplication-by-$p$ of $\widehat E$ is a power series
$[p]= p f(T) + g(T^p)$ where $f,g\in W[T]$ have no constant term.
If $\widehat \varphi'\in W[[T]]$ is any lift of $\widehat \varphi$, 
then a simple induction shows that $[p]^{n} \widehat \varphi'$ 
is well-defined modulo $p^{n+1}$. 
This implies that  
$p^{n}  \varphi$ is an endomorphism of $E_{n}$.
Therefore, $p^n\en{E_0}\subset \en{E_n}$.
Since elements of $\Ok{E}$ are endomorphisms of $E_n$  we find
\begin{equation}
\label{eq:endincl}
\Ok{K}+  p^{n} \en{E_0} \subset \en{E_{n}}
\end{equation}
for all $n\ge 0$.

The next lemma relies on
 Gross and Zagier's \cite{GrossZagier} version of Deuring's Lifting Theorem.

 \begin{lemma}
\label{lem:construct2}
   Let $n,x,$ and $d$ be as in Lemma \ref{lem:construct}.
We suppose in addition that $p\nmid d$ and that $d$ is square-free.
Then there exists a singular moduli $x_n\in W$ such that $(1+\sqrt{-d})/2$ is an endomorphism
of the associated elliptic curve and $|x_n|_p \le p^{-(n+1)}$. 
 \end{lemma}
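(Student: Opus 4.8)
The plan is to construct the singular moduli $x_n$ by applying Deuring's Lifting Theorem (in the form used by Gross and Zagier) to the endomorphism $\varphi \in \en{E_n}$ produced by Lemma \ref{lem:construct}, and then to extract the $p$-adic estimate $|x_n|_p \le p^{-(n+1)}$ from the fact that $\varphi$ lifts to an endomorphism of $E_n$ but (we will show) not of $E_{n}$'s own lift to $W$ unless the $j$-invariant is close to $0$. Concretely: by Lemma \ref{lem:construct}, $\varphi = [\,\tfrac12 - \tfrac{2\generator-1}{2}x,\ \tfrac{3-2\generator}{7}p^n\,]$ lies in $\Ok{K} + p^n\mathcal O = \Ok{K} + p^n \en{E_0}$, hence in $\en{E_n}$ by (\ref{eq:endincl}), and satisfies $\varphi^2 - \varphi + (1+d)/4 = 0$ with $d = 3x^2 + 4p^{2n+1}$. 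Since $p \nmid d$ and $d$ is square-free and $-d \equiv 1 \bmod 4$, the ring $\IZ[(1+\sqrt{-d})/2]$ is the maximal order of $\IQ(\sqrt{-d})$, and $\varphi \mapsto (1+\sqrt{-d})/2$ identifies a copy of this order inside $\en{E_n}$.

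**Next I would** invoke Deuring's Lifting Theorem: because $\varphi$ is an endomorphism of the ordinary-in-the-sense-of-$\en{E_n}$... no — $E_0$ is supersingular, so one uses the version that lifts a supersingular curve together with one endomorphism to characteristic zero. Gross and Zagier's formulation (their Lemma on p.\ 266 of \cite{GrossZagier}, or Proposition 2.7 there) states that given the supersingular curve $E_0/\overline{\IF}_p$ and an endomorphism generating an imaginary quadratic order $\mathcal O_d$, there is an elliptic curve $\widetilde E$ over a $p$-adic field with CM by $\mathcal O_d$ reducing to $E_0$, with the given endomorphism reducing correctly. I will apply this at finite level: since $\varphi \in \en{E_n}$, the lift can be chosen so that $\widetilde E$ reduces modulo $p^{n+1}$ to $E_n$, i.e.\ $\widetilde E$ is a lift of the truncated deformation $E_n$. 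Set $x_n = j(\widetilde E)$; this is a singular moduli (CM by $\mathcal O_d$), and it lies in $W$ after enlarging the base to the completed maximal unramified extension. Then $(1+\sqrt{-d})/2$ is an endomorphism of the curve associated to $x_n$, as required.

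**The $p$-adic bound** comes from comparing $\widetilde E$ with the fixed curve $E: y^2 = x^3+1$ over $W$, whose reduction is also $E_0$. Both $\widetilde E \bmod p^{n+1}$ and $E \bmod p^{n+1}$ equal $E_n$ as deformations of $E_0$ over $W/p^{n+1}W$ — this is exactly what "$\widetilde E$ lifts $E_n$" means, once we note $E \bmod p^{n+1} = E_n$ by definition. Two elliptic schemes over $W$ that agree modulo $p^{n+1}$ have $j$-invariants congruent modulo $p^{n+1}$, so $\ord_p(j(\widetilde E) - j(E)) \ge n+1$; but $j(E) = 0$, hence $|x_n|_p = |j(\widetilde E)|_p \le p^{-(n+1)}$. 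The reduction step — that congruent elliptic schemes have congruent $j$-invariants — is routine from the Weierstrass formula for $j$ in terms of the coefficients; here $E$ has a global model with coefficients in $W$ and $j(E)=0$ is an integer, so no denominators intervene.

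**The main obstacle** will be making the interface with Deuring's Lifting Theorem precise enough to control the reduction \emph{modulo $p^{n+1}$}, not merely modulo $p$. The textbook statement only guarantees a lift over some $p$-adic ring reducing to $E_0$ over the residue field; to get that the lift reduces to the prescribed truncation $E_n$ one must use that $\varphi$ is genuinely an endomorphism of $E_n$ (which (\ref{eq:endincl}) provides) and track through the proof of the lifting theorem — or, more cleanly, use Serre-Tate theory: an endomorphism of $E_0$ lifts to a deformation $E/W$ exactly when the deformation's Serre-Tate parameter is fixed by that endomorphism's action, and $\varphi \in \en{E_n}$ pins the parameter down modulo $p^{n+1}$. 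I would phrase the argument so that the lift $\widetilde E$ is chosen with Serre-Tate parameter agreeing with that of $E$ modulo $p^{n+1}$, which is possible precisely because $\varphi$ acts on $E_n$; then the congruence $j(\widetilde E) \equiv j(E) \bmod p^{n+1}$ is immediate from the Serre-Tate description of deformations. This also sidesteps any worry about whether Gross--Zagier's statement is literally strong enough as quoted.
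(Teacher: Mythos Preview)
Your proposal is correct and takes essentially the same approach as the paper: invoke Gross--Zagier's Proposition~2.7 to lift $(E_n,\varphi)$ to a CM elliptic curve $E'/W$ isomorphic to $E$ modulo $p^{n+1}W$, then use $j(E)=0$ to conclude $|x_n|_p\le p^{-(n+1)}$ (the paper cites their Proposition~2.3 for this last step, though your direct argument via Weierstrass coefficients is fine). Your worry that Proposition~2.7 might only control reduction modulo $p$ is unfounded---it does give an isomorphism over $W/p^{n+1}W$, and this is precisely where the hypothesis $p\nmid d$ enters (via Hensel, ensuring $x^2-x+(1+d)/4$ has two distinct roots in $W/p^{n+1}W$)---so the Serre--Tate detour, while correct, is unnecessary.
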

 \begin{proof}
   Let $\varphi$ be the endomorphism from Lemma \ref{lem:construct}. 
It satisfies $\varphi^2 - \varphi + (1+d)/4=0$ and lies in 
$\en{E_n}$ by (\ref{eq:endincl}). 
Proposition 2.7 \cite{GrossZagier} provides an elliptic curve $E'$
whose endomorphism ring contains $(1+\sqrt{-d})/2$ and which is
isomorphic to $E$ modulo $p^{n+1}W$.  
We remark that Hensel's Lemma and $p\nmid d$ imply that the polynomial
$x^2 - x +(1+d)/4$ 
has precisely two roots in $W/p^{n+1}W$. The hypothesis of Proposition
2.7 requires that  $d$ is a  fundamental discriminant.
The $j$-invariant of $E$ is zero. So the $j$-invariant $x_n$ of $E'$
satisfies
$|x_n|_p\le p^{-(n+1)}$ by Proposition 2.3 \cite{GrossZagier}. 
 \end{proof}

In general $3x^2+4p^{2n+1}$ may have a  quadratic factor, eg.
$3+4\cdot 11^{2\cdot 7+1}\equiv 0 \mod 49$.  
So we must carefully choose $x$ in order to apply Lemma \ref{lem:construct2}.
By an old result of Nagel \cite{Nagel} the polynomial
  $3x^2+4p^{2n+1}$ attains infinitely many  square-free values at integer
arguments. If $n\ge 1$, then
any such value
 is  coprime to $p$ 
and its corresponding argument  is necessarily  odd.
So the hypotheses of Lemmas \ref{lem:construct} and
 \ref{lem:construct2} are satisfied.
We obtain a singular moduli $x_n\in W$ with $|x_n|_p\le p^{-(n+1)}$.
Moreover,  $x_n\not=0$ because $3\nmid |\Delta(x_n)| = 3x^2+4p^{2n+1}$.

\subsection{Square-free Values of a Quadratic Polynomial}

To prove Proposition \ref{prop:approximate} we  must
  bound  $|x_n|_p$ from the previous section
 in terms of $|\Delta(x_n)|$.
To this end we need  an upper bound for one square-free value 
of the polynomial
\begin{equation}
\label{eq:polyf}
  f = 3x^2+4p^{2n+1}
\end{equation}
 in terms of $p^{n+1}$.

 As Igor Shparlinski  pointed out to the author, 
 Iwaniec and Friedlander \cite{FI:Squarefree} recently gave estimates for
 squarefree values of  
quadratic, monic polynomials.  
To treat (\ref{eq:polyf}) we will make explicit a sieving method 
 presented in Chapter 1 of the same authors' book
\cite{FI:Opera}.

In this section we assume $p\ge 5$  and that  $n\in\IN$.

If $y \ge 0$ is a real number we define
\begin{equation*}
  N(y) = \#\{ x\in \IN;\,\,  f(x) \le y
    \quad\text{and}\quad f(x)\text{ is square-free}\}.
\end{equation*}

Let $\mu(\cdot)$ denote the M\"obius function. If $m$ is a positive integer then 
$\sum_{d^2 | m} \mu(d) = 1$ if and only if $m$ is square-free,
otherwise this sum vanishes. Thus
\begin{equation}
\label{eq:Ny}
  N(y) = \sum_{\atopx{x\ge 1}{f(x)\le y}}
\sum_{d^2| f(x)}\mu(d)
= \sum_{1\le d \le y^{1/2}} \mu(d) A_{d^2}(y)
\quad\text{with}\quad
  A_{d^2}(y)  = \sum_{\atopx{x\ge 1,\, f(x)\le y}{
d^2|f(x)}} 1.
\end{equation}

Let $\epsilon \in (0,1)$ be a constant to be determined in course of
  the argument below.
In the following we use Landau's big-$O$ notation. 
All implicit constants
are absolute and thus
 independent of $p, n,\epsilon $, and the parameter $y$.

\begin{lemma} 
Let $d$ be a positive integer
 and suppose $y \ge 4p^{2n+1}\epsilon^{-1}$.
 Then
    \begin{equation}
\label{eq:Adestimate}
A_{d^2}(y)= \sqrt{\frac y3}\frac{\rho(d^2)}{d^2} + O\left(\rho(d^2)+
\epsilon y^{1/2}\frac{\rho(d^2)}{d^2}\right)
    \end{equation}
where $\rho(m) = \# \{ b\in\IZ/m\IZ;\,\, f(b)\equiv 0 \mod m\}$ for any
integer $m\ge 1$. 
\end{lemma}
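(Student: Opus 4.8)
The plan is to count the relevant $x$ geometrically, by distributing them among the residue classes modulo $d^2$. Since $f = 3x^2 + 4p^{2n+1}$ (see (\ref{eq:polyf})) is strictly increasing on the positive reals, the conditions $x \ge 1$ and $f(x) \le y$ together are equivalent to $1 \le x \le X$, where $X = \sqrt{(y - 4p^{2n+1})/3}$; here $X$ is a genuine non-negative real number because the hypothesis $y \ge 4p^{2n+1}\epsilon^{-1}$ combined with $\epsilon < 1$ forces $y \ge 4p^{2n+1}$. First I would write $A_{d^2}(y)$ as the sum, over the residues $b \bmod d^2$ with $f(b) \equiv 0 \bmod d^2$, of the number of integers in the interval $[1,X]$ that lie in the class $b$; by definition of $\rho$ there are exactly $\rho(d^2)$ such residues, and distinct classes contribute distinct integers, so there is no overcounting.

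Next, the number of integers in $[1,X]$ in a fixed arithmetic progression of modulus $d^2$ is $X/d^2 + O(1)$ with an absolute implied constant (in fact the error is bounded by $1$), and this holds uniformly for every real $X \ge 0$ --- including the degenerate range $X < 1$, where the count is simply $0$ and $X/d^2 < 1$. Summing this over the $\rho(d^2)$ classes yields $A_{d^2}(y) = \rho(d^2)X/d^2 + O(\rho(d^2))$.

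It then remains to replace $X$ by $\sqrt{y/3}$. Writing $X = \sqrt{y/3}\,(1 - 4p^{2n+1}/y)^{1/2}$ and using $0 \le 4p^{2n+1}/y \le \epsilon < 1$ together with the elementary inequality $|(1-t)^{1/2} - 1| \le t$ valid for $t \in [0,1]$, one obtains $|X - \sqrt{y/3}| \le \sqrt{y/3}\cdot 4p^{2n+1}/y \le \epsilon\, y^{1/2}$. Feeding this into $\rho(d^2)X/d^2$ separates off the main term $\sqrt{y/3}\,\rho(d^2)/d^2$ and an error $O(\epsilon\, y^{1/2}\rho(d^2)/d^2)$, which together with the earlier $O(\rho(d^2))$ is precisely the asserted estimate (\ref{eq:Adestimate}).

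I do not expect a serious obstacle here; this is a routine lattice-point count followed by a first-order approximation. The only points that genuinely need attention are verifying that the hypothesis on $y$ makes $X$ real and non-negative, so that the counting step is legitimate, and checking that every $O$-constant is truly absolute --- in particular the $O(1)$ per residue class and the constant in the bound for $|X-\sqrt{y/3}|$ must be independent of $d$, $p$, $n$, and $\epsilon$, which they are.
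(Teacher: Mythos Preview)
Your proof is correct and follows essentially the same approach as the paper: both split $A_{d^2}(y)$ into the $\rho(d^2)$ residue classes modulo $d^2$ and count integers in an interval in each class. Your presentation via the intermediate quantity $X=\sqrt{(y-4p^{2n+1})/3}$ followed by the first-order estimate $|X-\sqrt{y/3}|\le\epsilon\,y^{1/2}$ is slightly cleaner than the paper's, which handles the upper and lower bounds by separate explicit inequalities, but the substance is identical.
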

\begin{proof}
We split the sum
$A_{d^2}(y)=\sum_{\atopx{0\le a <d^2}{d^2|f(a)}} A_{d^2,a}(y)$ up into terms 
\begin{equation*}
 A_{d^2,a}(y) = \sum_{\atopx{x\ge 1,\, f(x)\le y}{
x\equiv a\mod d^2}} 1.
\end{equation*}
We note that
$A_{d^2,a}(y)$
only contributes to $A_{d^2}(y)$ if $a$ is a root of
 $f$ modulo $d^2$.
The lemma will follow from estimating
each $A_{d^2,a}(y)$ separately for integers $0\le a < d^2$
with $d^2|f(a)$. 

  If $k$ is an integer and if $x=a+d^2k\ge 1$ satisfies $3x^2+4p^{2n+1}\le y$, then 
$3d^4k^2\le y$ and so $k\le \sqrt{y/3}d^{-2}$.
Moreover, the bound  $1\le a+d^2 k < d^2(1+k)$
implies $k\ge 0$  and
so $A_{d^2,a}(y)\le \sqrt{y/3}d^{-2}+1$.
This implies the upper bound $A_{d^2}(y) \le \sqrt{y/3}d^{-2} \rho(d^2)
+\rho(d^2)$  which is better than claimed.

To prove the lower bound let us suppose
$1\le k\le (\sqrt{y/3} - \epsilon y^{1/2})d^{-2} - 1$. 
With our hypothesis on $y$ we find 
\begin{equation*}
  (1+k)d^2 \le \sqrt{\frac y3}- \epsilon \frac{y}{\sqrt
    y}
 \le 
\sqrt{\frac y3}- \frac{4p^{2n+1}}{\sqrt{3y}}
= \sqrt{\frac y3} \left(1-\frac{4p^{2n+1}}{y}\right).
\end{equation*}
The expression in parentheses lies in $(0,1)$ and so
$(1+k)d^2
\le\sqrt{ y/3}
 \sqrt{1-{4p^{2n+1}}/{y}}$.
If $x=a+kd^2$, then 
$1\le x \le (1+k)d^2 \le \sqrt{(y-4p^{2n+1})/3}$. 
Squaring this expression and rearranging yields 
$3x^2+4p^{2n+1}\le y$ and so this $x$ contributes to $A_{d^2,a}(y)$. 
The number of $k$ involved is at least
$ (\sqrt{y/3}-\epsilon y^{1/2})d^{-2}-2$.  
This yields the  lower bound for $A_{d^2}(y)$  in the assertion after
summing over $a$. 
\end{proof}

Let us assume from now on that $y \ge 4p^{2n+1}\epsilon^{-1}$ as in the last lemma. 

Let $e\in\IN$.
If $\ell\not\in\{2,3,p\}$ is a prime,
then Hensel's Lemma implies the equality in $\rho(\ell^e) =
\rho(\ell)\le 2$. Moreover, $\rho(3^e)=0$ is immediate
since $p\not=3$. 
Elementary considerations yield $\rho(2^e)\le 8$.
Finally, suppose  $f(a)\equiv 0 \mod p^e$ with $a\in\IZ$.
If $e\ge 2n+2$ then $p^{2n+1}|x^2$ and so $p^{2n+2}|x^2$ which
 contradicts (\ref{eq:polyf}). So $\rho(p^e) = 0$.
If $e\le 2n+1$, then $\rho(p^e)\le p^{e/2}$ for even $e$ and
$\rho(p^e)\le p^{(e-1)/2}$ for odd $e$. Therefore,
 $\rho(p^e)\le p^{e/2}$ in any case. 
The function $\rho$ is  multiplicative  by the Chinese Remainder
Theorem, so $\rho(m) \le 2^{2+\omega(m)} p^{\ord_p(m)/2}$ for all $m\in\IN$.
It is well-known that for any $\delta>0$ we have
$2^{\omega(m)}\le c(\delta)m^{\delta}$ where $c(\delta)$ depends only on $\delta$.

\begin{lemma}
\label{lem:estimates2}
The following estimates hold true. 
  \begin{enumerate}
\item  [(i)] We have
  \begin{equation*}
    \sum_{1\le d\le y^{1/3}}
\rho(d^2) = 
O\left(y^{5/12} \right)
\quad\text{and}\quad
    \sum_{1\le d \le y^{1/3}}
\frac{\rho(d^2)}{d^2} = 
O(1).
  \end{equation*}
  \item [(ii)] We have
    \begin{equation*}
    \sum_{d>y^{1/3}}
\frac{\rho(d^2)}{d^2} = 
O(y^{-1/4}).      
    \end{equation*}
\item [(iii)]
  Let $k\in\IN$, then
  \begin{equation}
\label{eq:units}
    \#\{(x,d)\in\IN^2;\,\, 3x^2+4p^{2n+1}=d^2k \le y\} = 
O((\log y)^2). 
  \end{equation}
  \end{enumerate}
\end{lemma}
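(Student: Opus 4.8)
The plan is to handle the three estimates separately using the multiplicative bound $\rho(m)\le 2^{2+\omega(m)}p^{\ord_p(m)/2}$ established above together with standard elementary estimates. For part (i), first I would use $\rho(d^2)\le 2^{2+\omega(d^2)}p^{\ord_p(d^2)/2} = 2^{2+\omega(d)}p^{\ord_p(d)}$ and the bound $2^{\omega(d)}\le c(\delta)d^{\delta}$ for a small $\delta>0$ to be chosen. Since $p\le y^{1/(4n+2)}$ whenever there is an admissible $x$ (indeed $4p^{2n+1}\le 3x^2+4p^{2n+1}=f(x)\le y$), the factor $p^{\ord_p(d)}\le p^{\log d/\log p}=d$ only when $p\le d$, which forces such $d$ to be of a restricted shape; more simply, write $d=p^a d'$ with $p\nmid d'$, so $\rho(d^2)\ll_\delta (d')^{\delta}p^{a\delta}p^a$, and one has $p^a\le d$, so $\rho(d^2)\ll_\delta d^{1/2+\delta}$ after absorbing the $p^a$ against $d'^{1/2}$ crudely (this is where one must be a touch careful, since $p^a$ can be as large as $d$; but then $d'=d/p^a$ is small, and summing over the at most $O(\log y)$ powers of $p$ and then over $d'$ recovers a saving). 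Granting $\rho(d^2)\ll_\delta d^{1/2+\delta}$, the first sum in (i) is $\ll_\delta \sum_{d\le y^{1/3}} d^{1/2+\delta}\ll y^{(1/3)(3/2+\delta)}=y^{1/2+\delta/3}$, which is worse than the claimed $y^{5/12}$, so I expect the real argument needs the sharper pointwise bound $\rho(d^2)\ll_\delta d^{\delta}$ away from the prime-power part, i.e. one should exploit that $\rho(\ell^2)=\rho(\ell)\le 2$ for $\ell\notin\{2,3,p\}$ so that $\rho(d^2)\le 2^{2+\omega(d)}p^{\ord_p(d)}$ and the $p$-part contributes only $O(\log y)$ values of $d\le y^{1/3}$ carrying a factor at most $p^{\ord_p(d)}\le y^{1/3}$; this is the delicate bookkeeping. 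The second sum in (i) is then immediate: $\sum_{d}\rho(d^2)/d^2$ is dominated by the Euler product $\prod_\ell(1+\rho(\ell^2)/\ell^2+\cdots)$, which converges since $\rho(\ell^2)\le 2$, so it is $O(1)$.

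For part (ii), the tail $\sum_{d>y^{1/3}}\rho(d^2)/d^2$ is estimated by partial summation from the bound $\sum_{d\le t}\rho(d^2)\ll_\delta t^{1+\delta}$ (a Dirichlet-hyperbola/convolution estimate: $\rho(d^2)$ is bounded by a divisor-type function, whose summatory function is $O(t\log t)=O_\delta(t^{1+\delta})$). Then $\sum_{d>y^{1/3}}\rho(d^2)/d^2 = \int_{y^{1/3}}^\infty t^{-2}\,d\bigl(\sum_{d\le t}\rho(d^2)\bigr)\ll_\delta \int_{y^{1/3}}^\infty t^{-1+\delta}\,dt\cdot(\text{boundary})\ll y^{-1/3+\delta/3}$, and choosing $\delta$ small enough against the claimed exponent $1/4$ finishes (ii) — again the $p$-part needs separate, easy treatment since it contributes at most a bounded geometric series.

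For part (iii), I would fix $k$ and count pairs $(x,d)$ with $3x^2+4p^{2n+1}=kd^2$. This is a quadratic Diophantine equation: rewriting as $kd^2-3x^2 = 4p^{2n+1}$, i.e. a Pell-type / norm equation $N(d\sqrt{k}+x\sqrt{3})=\cdots$ up to units, whose solutions with bounded size lie in $O(\log y)$ orbits under the unit group of $\IZ[\sqrt{3k}]$, and within a fundamental domain there are $O(d(4p^{2n+1})) = O(y^{\epsilon})$-many; but the claim is sharper, $O((\log y)^2)$, so the right route is: the number of representations of $4p^{2n+1}$ by the form $kX^2-3Y^2$ in a box of size $\ll\sqrt y$ is bounded by (number of ideal classes contributing) $\times$ (size of the fundamental solution set $\ll \log y$) $\times$ (divisor-type count of $4p^{2n+1}$ which is $O(\log y)$ since $4p^{2n+1}$ has only $O(\log y)$ divisors). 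Multiplying the two $\log y$ factors gives $(\log y)^2$. \textbf{The main obstacle} I anticipate is part~(i): squeezing the exponent down to $5/12$ rather than the naive $1/2+\delta$ requires genuinely using the splitting $\rho(d^2)=\rho(\text{prime-to-}2p\text{ part})\cdot\rho(2\text{-part})\cdot\rho(p\text{-part})$ with the crucial input $\rho(\ell^2)\le 2$ for $\ell\nmid 6p$, so that the summatory function of $\rho(d^2)$ over $d\le y^{1/3}$ behaves like that of a bounded-divisor function — $O(y^{1/3}\log y)$ — rather than something with a power-of-$d$ growth; the stated $y^{5/12}$ then presumably already incorporates a further elementary refinement (e.g. separating $d$ by whether $d\le y^{1/4}$ or $y^{1/4}<d\le y^{1/3}$ and using a trivial bound $\rho(d^2)\le d$ only on the short range), which is exactly the kind of careful-but-routine splitting I would carry out last.
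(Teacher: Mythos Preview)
Your outline for parts (ii) and (iii) is essentially correct and matches the paper. For (iii) the paper does exactly what you sketch: rewrite $3x^2+4p^{2n+1}=d^2k$ as a norm equation in a real quadratic field (after clearing the factor $3$), count the $O(n)=O(\log y)$ ideal divisors of $12p^{2n+1}$, and then bound the number of unit multiples of each by $O(\log y)$ via a height estimate and the lower bound for the regulator of a quadratic field. For (ii) the paper sums the tail directly rather than via partial summation, but your route gives the same exponent.

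For part (i), however, you have the right idea at the point where you write ``summing over the at most $O(\log y)$ powers of $p$ and then over $d'$ recovers a saving'' --- but you then abandon it and look for a nonexistent extra refinement. There is no obstacle; this splitting already gives $y^{5/12}$. Concretely: from $\rho(d^2)\ll 2^{\omega(d)}p^{\ord_p(d)}\ll d^{1/5}p^{\ord_p(d)}$ (taking $\delta=1/5$ in the divisor bound), write $d=p^e k$ with $p\nmid k$ and sum over $e$ and $k$:
\[
\sum_{d\le t}\rho(d^2)\;\ll\;\sum_{e\colon p^e\le t} p^{e}\sum_{k\le t/p^e}(p^e k)^{1/5}
\;\ll\;\sum_{e\colon p^e\le t} p^{6e/5}\,(t/p^e)^{6/5}
\;=\; t^{6/5}\cdot\#\{e:p^e\le t\}.
\]
The number of admissible $e$ is $O(\log t/\log p)=O(\log t)$ (recall $p\ge 5$, so the constant is absolute). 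With $t=y^{1/3}$ this is $O(y^{2/5}\log y)=O(y^{5/12})$, since $2/5<5/12$. The second sum in (i) falls out of the same decomposition with $d^{-2}$ inserted: the inner sum over $k$ is now $\sum_k k^{-9/5}=O(1)$ and the outer sum over $e$ is a convergent geometric series in $p^{-4e/5}$.

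So your proposed range-splitting by $d\le y^{1/4}$ versus $y^{1/4}<d\le y^{1/3}$, and your attempted crude bound $\rho(d^2)\ll d^{1/2+\delta}$, are both unnecessary detours; the exponent $5/12$ is simply a clean cover for $2/5+o(1)$, not the output of a delicate optimisation.
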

\begin{proof}
We abbreviate $t=y^{1/3}$ and set
 $I_s = \sum_{1\le d\le t}
\rho(d^2)d^{-s}$ for $s\in \{0,2\}$.
Then
$I_s
= O(\sum_{e\ge 0} p^e \sum_{\atopx{1\le d\le t}
{\ord_p(d)=e}}d^{1/5-s})$ thus $I_s
=
O(\sum_{e\ge 0} p^{e(1+1/5-s)}  \sum_{1\le k\le t/p^e}
k^{1/5-s})$
with $d=p^ek$ in the final sum.
This final sum vanishes if $p^e>t$. 
If $t\ge p^e$ and $s=0$, it is at most $(t/p^e)^{6/5}$.
Using $t=y^{1/3} \ge (p^{2n+1})^{1/3}\ge p$ we get
$I_0
= O(t^{6/5} (\log t)/(\log p))$. This implies
 the first half of (i).
 If $s=2$ then said final sum is 
at most $\sum_{k\ge 1} k^{-9/5}$.
So $I_2  = O (\sum_{e\ge 0} p^{-4e/5})$.
Now $\sum_{e\ge 0}p^{-4e/5} = 1/(1-p^{-4/5})=O(1)$ 
 gives $I_2=O(1)$, the second half of (i). 

For (ii) we use $\rho(p^{2n+2})=0$ and estimate
\begin{equation*}
 I=\sum_{d>t} \frac{\rho(d^2)}{d^2} = O\left(\sum_{e=0}^{2n+1}
p^e\sum_{\atopx{d>t}{\ord_p(d) = e}}
 \frac{1}{d^ {2-1/5}}\right)\quad\text{so}\quad 
 I
=O\left(\sum_{e=0}^{2n+1}p^ {e(1/5-1)}
{\sum_{k>t/p^e}}k^{1/5-2}\right).
\end{equation*}
We compare the final sum with the corresponding integral and obtain
$I =
O(\sum_{e=0}^{2n+1}p^{-4e/5}(t/p^e)^{-4/5})$.
This sum simplifies to 
$\sum_{e=0}^{2n+1}t^{-4/5} = (2n+2)t^{-4/5}$ and
part (ii) follows since  $y\ge p^n$.

Let us now assume $(x,d)$ is in the set on the left side of (\ref{eq:units}).
Then $3 \nmid k$ since $p\not=3$. 
We write $3k=k's^2$ with $s\in\IN$, $k'\in\IN$ square-free, and
 $3|k'$. 
We set $d'= ds, x'=3x$ and note that
$x'^2 -d'^2 k' = -12p^{2n+1}$. 
In other words, the norm of $z'=x'-d'\sqrt{k'}$ is $-12p^{2n+1}$
as an element of the real quadratic field $F=\IQ(\sqrt{k'})$. 
Say $\mathcal{O}_F$ is the ring of integers of this field. 
It contains $z'$ and 
the principal ideal $z'\mathcal{O}_F$ divides
$12p^{2n+1}\mathcal{O}_F$. 
There are  $O(n)$ possibilities for the ideal
$z'\mathcal{O}_F$ as  $p\mathcal{O}_F$ has at most $2$ prime ideal
factors. But $p^n\le y$ and so the number of $z'\mathcal{O}_F$ is also
$O(\log y)$.

However, distinct elements from our original set may lead to the same
ideal. Indeed, the corresponding elements $z',z''$ could be
associated. In this case $z'' = \pm \eta^N z'$ for some $N\in\IZ$ where $\eta>1$ is
the fundamental unit of $F$. We must count how often this happens. 
Taking the exponential absolute
 Weil height $\Height{\cdot}$ and using its basic properties  gives
\begin{equation*}
  \Height{\eta}^{|N|} = \Height{\pm\eta^N}=\Height{z''/z'} \le \Height{z'}\Height{z''}. 
\end{equation*}
From the definition of $z'$  we find
$\Height{z'}\le  2\Height{x'}\Height{d'\sqrt{k'}}
= 2x' d'\sqrt{k'}
\le 6y$. The same inequality holds for $\Height{z''}$ and thus
$|N|\log\Height{\eta} \le 2\log(6y)$. 
But $\log\Height{\eta}>0$ and since $\eta$ is in an quadratic number
field, its logarithmic height is bounded from below by a
positive absolute constant. 
We derive $|N|=O(\log y)$. The number of possibilities
for the pair $(x,d)$ is thus $O((\log y)^2)$. 
\end{proof}

\begin{lemma}
\label{lem:Nyestimate}
  We have
  \begin{equation*}
    N(y) = c(p,n) \sqrt{\frac y3} + O(y^{5/12} + \epsilon y^{1/2})
  \end{equation*}
where 
$c(p,n)= \sum_{d=1}^\infty
\mu(d)\frac{\rho(d^2)}{d^2}>1/7$.
\end{lemma}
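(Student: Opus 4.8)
The plan is to run the elementary square-free sieve that (\ref{eq:Adestimate}) and Lemma \ref{lem:estimates2} have prepared. Write $t=y^{1/3}$ and split the sum in (\ref{eq:Ny}) as
\begin{equation*}
  N(y) = \sum_{1\le d\le t}\mu(d)A_{d^2}(y) + \sum_{t<d\le y^{1/2}}\mu(d)A_{d^2}(y).
\end{equation*}
In the first sum I would substitute the estimate (\ref{eq:Adestimate}), which is legitimate since $y\ge 4p^{2n+1}\epsilon^{-1}$ is in force. This produces the main term $\sqrt{y/3}\sum_{d\le t}\mu(d)\rho(d^2)/d^2$ together with an error $O\!\left(\sum_{d\le t}\rho(d^2)\right)+O\!\left(\epsilon y^{1/2}\sum_{d\le t}\rho(d^2)/d^2\right)$, which is $O(y^{5/12}+\epsilon y^{1/2})$ by Lemma \ref{lem:estimates2}(i). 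To complete the truncated sum to the full series $c(p,n)=\sum_{d\ge1}\mu(d)\rho(d^2)/d^2$ I would use Lemma \ref{lem:estimates2}(ii): the discarded tail contributes $\sqrt{y/3}\,\bigl|\sum_{d>t}\mu(d)\rho(d^2)/d^2\bigr|=O(y^{1/2}y^{-1/4})=O(y^{1/4})$, which also shows the series defining $c(p,n)$ converges.

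For the second sum I would pass to absolute values and bound it by $\sum_{t<d\le y^{1/2}}A_{d^2}(y)$, the number of pairs $(x,d)\in\IN^2$ with $f(x)\le y$, $d^2\mid f(x)$ and $d>t$. Setting $k=f(x)/d^2\in\IN$, the inequalities $d>t$ and $f(x)\le y$ force $k\le y/t^2=y^{1/3}$, so for each of these at most $y^{1/3}$ values of $k$ part (iii) of Lemma \ref{lem:estimates2} bounds the number of relevant $(x,d)$ by $O((\log y)^2)$, for a grand total of $O(y^{1/3}(\log y)^2)$. Collecting the contributions gives
\begin{equation*}
  N(y)=c(p,n)\sqrt{\tfrac y3}+O\bigl(y^{5/12}+\epsilon y^{1/2}+y^{1/4}+y^{1/3}(\log y)^2\bigr),
\end{equation*}
and since $y^{1/4}$ and $y^{1/3}(\log y)^2$ are $O(y^{5/12})$ the claimed asymptotic follows; the implied constant is absolute because every bound inherited from Lemma \ref{lem:estimates2} is.

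It remains to prove $c(p,n)>1/7$. As $\rho$ is multiplicative and $\sum_\ell\rho(\ell^2)/\ell^2<\infty$ (the summands are $\le 2/\ell^2$ for $\ell\notin\{2,3,p\}$ and finite at $2,3,p$), the series converges absolutely and $c(p,n)=\prod_\ell\bigl(1-\rho(\ell^2)/\ell^2\bigr)$. For $n\ge1$ a direct computation gives $\rho(4)=2$ (since $f(b)\equiv 3b^2\bmod 4$), $\rho(9)=0$ (since already $\rho(3)=0$ as $p\ne3$), and $\rho(p^2)=p$ (since $f(b)\equiv 3b^2\bmod p^2$ with $p\nmid3$), so the local factors at $2$, $3$ and $p$ are $\tfrac12$, $1$ and $1-\tfrac1p\ge\tfrac45$. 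For every other prime $\ell$ Hensel's Lemma yields $\rho(\ell^2)=\rho(\ell)\le2$, hence the remaining factors are $\ge 1-2/\ell^2>0$; since the primes $\ge5$ form a subset of the integers $\ge5$,
\begin{equation*}
  \prod_{\substack{\ell\ge5\\\ell\ne p}}\Bigl(1-\frac{2}{\ell^2}\Bigr)\ \ge\ \prod_{\ell\ge5}\Bigl(1-\frac{2}{\ell^2}\Bigr)\ \ge\ \prod_{m\ge5}\Bigl(1-\frac{2}{m^2}\Bigr)\ >\ \frac12 ,
\end{equation*}
the last inequality being routine (compare $\sum_{m\ge5}\log(1-2/m^2)$ with a convergent series, or telescope the Gamma product; the value exceeds $0.6$). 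Therefore $c(p,n)>\tfrac12\cdot1\cdot\tfrac45\cdot\tfrac12=\tfrac15>\tfrac17$. The step needing the most care is the second, ``Type II'' sum, where (\ref{eq:Adestimate}) is useless and one must instead exploit that the complementary divisor $k=f(x)/d^2$ is small and feed this into the unit-counting bound in Lemma \ref{lem:estimates2}(iii).
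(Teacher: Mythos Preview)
Your argument is correct and follows the paper's proof essentially step for step: the same split at $t=y^{1/3}$, the same use of (\ref{eq:Adestimate}) and Lemma \ref{lem:estimates2}(i),(ii) on the first sum, and the same reduction of the second sum to Lemma \ref{lem:estimates2}(iii) via the small complementary divisor $k\le y^{1/3}$. The only visible difference is in the numerical lower bound for $c(p,n)$: the paper replaces $2/\ell^2$ by $1/\ell^{3/2}$ and bounds $\tfrac12\cdot\tfrac{p-1}{p}\prod_{\ell\ne 2,p}(1-\ell^{-3/2})\ge \tfrac25\prod_\ell(1-\ell^{-3/2})>1/7$, whereas you keep $2/\ell^2$ and dominate the product over primes $\ge 5$ by the product over all integers $\ge 5$, obtaining the sharper $c(p,n)>1/5$; both are valid and elementary.
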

\begin{proof}
We split $N(y)$ from (\ref{eq:Ny}) up into
$\sum_{1\le d \le y^{1/3}} \mu(d)A_{d^2}(y) 
+ \sum_{y^{1/3}<d \le y^{1/2}}\mu(d)A_{d^2}(y)$.
The absolute value of the second sum is at most 
\begin{equation*}
 \sum_{y^{1/3}< d\le y^{1/2}} A_{d^2}(y)
= \#\{(x,d)\in \IN^2;\,\, 3x^2+4p^{2n+1}=d^2k\le y \text{ and
  $d>y^{1/3}$ for some }k\in \IN\}. 
\end{equation*}
Any $k$ as above must satisfy $k\le y^{1/3}$.
So the second sum is  $O(y^{1/3}(\log y)^2)$ by part (iii) of the previous lemma.

We use  (\ref{eq:Adestimate})  and  
 Lemma \ref{lem:estimates2}(i) to handle the first sum  and obtain
\begin{alignat*}1
  N(y) 
&=
\sqrt{\frac y3}\sum_{1\le d\le y^{1/3}} \mu(d)
\frac{\rho(d^2)}{d^2}
+  O(y^{5/12}+\epsilon y^{1/2}).
\end{alignat*}
Next we want to replace the sum on the right to a sum over all $d$.
Doing this introduces an error $O(y^{1/4})$
by Lemma \ref{lem:estimates2}(ii). So
\begin{equation*}
N(y) = c(p,n)\sqrt{\frac y3}+ 
O(y^{5/12}+ \epsilon y^{1/2})
\end{equation*}
 with $c(p,n)$ as in the hypothesis.

The function $d\mapsto \mu(d)\rho(d^2)$ is multiplicative.
Hence $c(p,n)$ is represented by the 
 Euler product 
$c(p,n) = \prod_{\ell} (1-\rho(\ell^2)/\ell^{2})$.
We already know that $\rho(\ell^2) = \rho(\ell)\le 2$ if
$\ell\not\in\{2,3,p\}$ and $\rho(3^2)=0$. 
So $\rho(\ell^2)/\ell^2 \le 1/\ell^{3/2}$ except possibly
for $\ell=2$ or $p$. 
One readily checks $\rho(4)=2$ and we know $\rho(p^2)\le p$. 
So
\begin{equation*}
  c(p,n) \ge 
\frac 12\frac{p-1}{p}
\prod_{\ell\not=2,p}\left(1-\frac{1}{\ell^{3/2}}\right)
\ge \frac 25 \prod_{\ell}\left(1-\frac{1}{\ell^{3/2}}\right)
\end{equation*}
because $p\ge 5$. Elementary estimates
now yield $c(p,n)> 1/7$. 
\end{proof}

\begin{proof}[Proof of Proposition \ref{prop:approximate}]
By Lemma \ref{lem:Nyestimate}  and since $7\sqrt{3}<13$
we may fix   $\epsilon > 0$  
such that $N(y) \ge \sqrt{y} /13  + O(y^{5/12})$.
So there is an integer $x$ such that 
 $3x^2+4p^{2n+1}$  is square-free and at most $c'
p^{2n+1}$ where $c'>0$ is absolute. The argument concludes as in the final paragraph
of Section
\ref{sec:quaternion}. 
\end{proof}

\def\cprime{$'$}
\providecommand{\bysame}{\leavevmode\hbox to3em{\hrulefill}\thinspace}
\providecommand{\MR}{\relax\ifhmode\unskip\space\fi MR }
% \MRhref is called by the amsart/book/proc definition of \MR.
\providecommand{\MRhref}[2]{%
  \href{http://www.ams.org/mathscinet-getitem?mr=#1}{#2}
}
\providecommand{\href}[2]{#2}
\bibliographystyle{amsplain}

\vfill
\address{
\noindent
Philipp Habegger,
Johann Wolfgang Goethe-Universit\"at,
Robert-Mayer-Str. 6-8,
60325 Frankfurt am Main,
Germany,
{\tt habegger@math.uni-frankfurt.de}
}
\bigskip
\hrule
\medskip

 %% \noindent {\tt \jobname.tex} created: January 6, 2012. Latest update: \today.

\end{document}